\newtheorem{thm}{Theorem}[section]
\newtheorem{lemma}[thm]{Lemma}
\newtheorem{prop}[thm]{Proposition}
\newtheorem{defn}[thm]{Definition}
\newcommand{\R}{\mathbb{R}}
\newcommand{\A}{\alpha}
\newcommand{\E}{\epsilon}
\newcommand{\Z}{\begin{equation}}
\newcommand{\z}{\end{equation}}
\newcommand{\y}{\overline}
\newcommand{\IP}{\mathbb{P}}
\newcommand{\IE}{\mathbb{E}}
\begin{document}

\title[Differential equations driven by rough paths]{Differential equations
driven by rough paths: an approach via discrete approximation.}

\author{A. M. Davie}

\address{School of Mathematics, University of Edinburgh, King's Buildings,
Mayfield Road, Edinburgh EH9 3JZ, UK}

\subjclass[2000]{Primary 60H10; Secondary 34F05}



\begin{abstract}A theory of systems of differential equations of the
form $dy^i=\sum_jf^i_j(y)dx^i$, where the driving path $x(t)$ is 
non-differentiable, has recently been developed by Lyons. We develop an
alternative approach to this theory, using (modified) Euler approximations,
and investigate its applicability to stochastic differential equations
driven by Brownian motion. We also give some other examples showing that
the main results are reasonably sharp.
\end{abstract}

\maketitle

\section{Introduction}\label{int}

Lyons \cite{tjl} has developed a theory of systems of differential equations
of the form
\Z\label{eq1} dy^i=\sum_{j=1}^df_j^i(y)dx^j,\ \ \ \ \ \ \ y^i(0)=y_0^i,
\ \ \ i=1,2,\cdots,n\z
where $x(t)$ is a given continuous (vector-valued) function of $t$ but
is not assumed to be differentiable, so the system is not a system of
classical differential equations. In \cite{tjl} $x(t)$ is assumed to have
finite $p$-variation for some positive $p$. The study of equations driven by such
rough paths is motivated by the case of stochastic differential equations driven by
Brownian motion, which has finite $p$-variation only if $p<\frac12$. Rough path theory
gives an approach to such stochastic equations by viewing them as deterministic equations,
for a fixed choice of driving path, which is in contrast to the more classical stochastic
approach. Applications of rough path theory to stochastic equations can be found, for example,
in \cite{fr,hl,lq}.

The approach to (\ref{eq1}) in \cite{tjl}
mirrors the standard approach to ODE's by writing them as integral equations
and using Picard iteration or a contraction mapping argument. So one writes
\[y^i(t)=y^i_0+\sum\int_0^tf^i_j(y(s))dx^j(s)\]
and the problem then is to interpret the integral on the right. This is
fairly straightforward if $p<2$; if $2\leq p<3$ then Lyons shows that one
can make sense of this integral if one `knows' integrals of the form
$\int x^i(s)dx^j(s)$. His approach is to suppose these latter integrals
are given, subject to natural consistency conditions, and then to develop
an integration theory which suffices to treat the differential
equations. If $p\geq3$ then it is necessary to assume higher-order
iterated integrals of $x(t)$ are given. In this setting Lyons in \cite{tjl}
proves existence and uniqueness of solutions of (\ref{eq1}) provided $f\in
C^\gamma$ where $\gamma>p$.

Another way of proving existence and uniqueness theorems for classical
ODE's of the form $\dot{y}=f(y)$ is to consider Euler approximations
$y_{k+1}=y_k+(t_{k+1}-t_k)f(y_k)$ associated with subdivisions
$\{t_0,t_1,\cdots t_K\}$ and show that as the subdivision gets finer
these approximations converge to a limit which satisfies the equation.
Indeed this is a standard proof for the case where $f$ is continuous
but not Lipschitz, so that one gets existence but not uniqueness. In the
present paper we use this approach to study the system (\ref{eq1}). Using
suitable estimates for discrete approximations to (\ref{eq1}) we are able to
prove their convergence to solutions of (\ref{eq1}). When $p<2$ the simple
Euler approximation suffices. When $2\leq p<3$ we need to assume the integrals
$\int x^idx^j$ given, and incorporate them into the discrete approximation.
We restrict attention to $p<3$ which is enough to illustrate the basic
ideas and covers many of the applications, but avoids the algebra of
iterated integrals which is needed for the general case. We show (in
sections \ref{s2} and \ref{s3}) existence of solutions when $f\in
C^{\gamma-1}$ when $\gamma>p$, and uniqueness when $f\in C^\gamma$ where
$\gamma\geq p$. The proofs give, when $f\in C^\gamma$, convergence of the
Euler approximation to the solution in the case $p<2$, and convergence of the
modified Euler approximation when $2\leq p<3$.

We treat the simpler case $1\leq p<2$ first, in section \ref{s2}, the
treatment of $2\leq p<3$ in section \ref{s3} being similar but with extra
terms to handle. This results in some repetition of arguments but it is
hoped that treating the simpler case will make the ideas clearer.

In section \ref{bm} we consider the application of the theory to the case of
equations driven by Brownian motion, which is one of the motivating
examples. In this context we investigate the smoothness requirements on
$f$ for existence and uniqueness of solutions.
In section \ref{oe} we give examples to show that the results of sections
\ref{s2} and \ref{s3} are sharp, to the extent that that uniqueness can fail
with $f\in C^\gamma$ whenever $1<\gamma<p<3$, and existence can fail for
$f\in C^{p-1}$ whenever $1<p<2$ or $2<p<3$.  Section \ref{ge} treats global
existence questions. Finally in Section \ref{ce} we show that, under an
additional condition, the (unmodified) Euler approximations, with uniform
step size, converge to the solution even when $2\leq p<3$.

We comment briefly on the relation of our results to those of \cite{tjl}. When
$p\geq2$ the notion of solution developed in \cite{tjl} contains more than the solution
path $y(t)$ which we obtain. Just as the driving path $x(t)$ has to be accompanied by the associated
`iterated integrals', the approach taken in \cite{tjl} is that the solution $y(t)$ should also be
accompanied by its iterated integrals, and the solution obtained there incorporates these as well
as $y(t)$. This leads to a more complete theory at the expense of greater complexity. The results
of sections \ref{s2} and \ref{s3} recover the main existence and uniqueness result of \cite{tjl} for $\gamma<3$, in the
restricted sense that only the solution path $y(t)$ is obtained, by a different method and somewhat
more easily. We get some slight improvement in the regularity requirements, in that existence is shown
for $f\in C^{\gamma-1}$ rather than $C^\gamma$ and uniqueness in the borderline case $\gamma=p$ (for $f\in C^\gamma$).
Some more discussion of the relation to \cite{tjl}, including an indication of how the full solution of \cite{tjl} might be
obtained by our method, can be found at the end of section \ref{s3}. The results of the remaining sections are new to the best of
my knowledge.
\vspace{.2cm}\\
{\bf Notation.}

Let $n$ and $d$ be positive integers.
For $\rho=N+\alpha$ where $N$ is a positive integer and $0<\alpha\leq1$ we
define $C^\rho$ to be the set of $f=(f_j^i)$ where $i=1,\cdots,n$ and
$j=1,\cdots,d$, such that each $f_j^i$ is defined on $\R^n$,
 and has derivatives up to order $N$, which all satisfy locally a H\"{o}lder
condition of exponent $\alpha$ (note that this definition of $C^N$ when
$N$ is an integer differs from the usual, in that we do not require
continuity of the derivatives of order $N$). We denote by $C_0^\rho$ the
set of $f\in C^\rho$ which vanish outside a bounded set.

We shall generally suppose that either $1\leq p<\gamma\leq2$ or $2\leq
p<\gamma\leq3$.

Consider a continuous $\R^d$-valued function $x(t)=(x^1(t),\cdots,x^d(t))$ on an
interval $[0,T]$, and suppose $x(t)$ has finite $p$-variation, in the
sense that there is a continuous increasing function $\omega$ on $[0,T]$
such that $|x(t)-x(s)|^p\leq\omega(t)-\omega(s)$ whenever $0\leq s\leq
t\leq T$. We sometimes write $\omega(s,t)$ for $\omega(t)-\omega(s)$.

We use the summation convention for indices $h,q,r,j$, where $h$ and $q$
range from 1 to $n$ and $r$ and $j$ from 1 to $d$. Then (\ref{eq1}) can be written
\[dy^i=f_j^i(y)dx^j,\ \ \ \ \ \ \ y^i(0)=y_0^i\]
where $y(t)=(y^1(t),\cdots,y^n(t))$. We also write $\partial_q$ for
$\partial/\partial y_q$.

We shall occasionally use {\em dyadic intervals}, by which we mean
intervals of the form\\
$[k2^{-m},(k+1)2^{-m}]$ where $k$ and $m$ are integers.\vspace{.2cm}\\
\section{\bf The case $p<2$.}\label{s2}

Suppose $1\leq p<\gamma\leq2$, and that $x(t)$ has finite $p$-variation
in the sense defined above. Then we interpret (1) as follows:

\begin{defn}\label{def1}
We say $y(t)$ is a {\em solution} of (1) on $[0,T]$ if $y^i(0)=y_0^i$ and there exists a
continuous increasing function $\tilde{\omega}$ on $[0,T]$ and a non-negative
function $\theta$ on $[0,\infty)$ such that $\theta(\delta)=o(\delta)$ as
$\delta\rightarrow 0$ and such that
\Z\label{eq2}|y^i(t)-y^i(s)-f_j^i(y(s))(x^j(t)-x^j(s))|\leq\theta
(\tilde{\omega}(t)-\tilde{\omega}(s))\z
for all $s,t$ with $0\leq s<t\leq T$.
\end{defn}

Note that $\tilde{\omega}$ may {\em a priori} differ from the function
$\omega$ that appears in the $p$-variation condition on $x$, though we
shall in fact see (remark 1 below) that, for any solution $y$, (2)
will actually hold with $\tilde{\omega}=\omega$.

We also consider discrete approximations to a solution: given
$0=t_0<t_1<\cdots<t_K$, let $x_k=x(t_k)$ and, given $y_0$, define $y_k$
by the recurrence relation
\Z\label{eq3}y_{k+1}^i=y_k^i+f_j^i(y_k)(x_{k+1}^j-x_k^j)\z
Then we can state the following existence result:

\begin{thm}\label{th2}
 Let $f\in C^{\gamma-1}$ (assuming $1\leq p<\gamma\leq2$) and
$y_0\in\R^n$. Then there
exists $\tau$, with $0<\tau\leq T$, and a solution $y(t)$ of (\ref{eq1})
for $0\leq t<\tau$, such that if $\tau<T$ then $|y(t)|\rightarrow\infty$
as $t\rightarrow\tau$.
\end{thm}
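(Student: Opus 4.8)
The plan is to prove local existence by establishing uniform estimates on the discrete (Euler) approximations $y_k$ defined by \eqref{eq3}, and then passing to the limit along a sequence of refining subdivisions. First I would fix a bounded neighbourhood of $y_0$, say a closed ball $B$, and work on a time interval $[0,\tau]$ short enough (in terms of $\omega$) that all the discrete trajectories stay in $B$; on $B$ the relevant quantities $|f_j^i|$ and the local H\"older constants of $f\in C^{\gamma-1}$ are bounded, so we may as well assume $f\in C_0^{\gamma-1}$ by truncation. The key step is a one-step-error estimate: for a subdivision point $t_k<t_l$, writing $y_{k,l}$ for the result of running the recurrence from $y_k$, I expect to show
\[
|y_l^i - y_k^i - f_j^i(y_k)(x_l^j-x_k^j)| \le C\,\bigl(\omega(t_k,t_l)\bigr)^{\gamma/p},
\]
with $C$ depending only on the $C^{\gamma-1}$-norm of $f$ on $B$ and on $p,\gamma$. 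Since $\gamma/p>1$, the right-hand side is $o(\omega(t_k,t_l))$, which is exactly the form needed for Definition \ref{def1}.

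The heart of the argument is proving this estimate, and I would do it by the standard ``greedy'' / almost-additivity induction on subdivisions. Define $\Gamma(s,t)$ to be the left-hand side above for the sub-subdivision of $[s,t]$ induced by the $t_k$'s; given an intermediate point $u=t_m$ with $s<u<t$, one expands $y_t - y_s - f_j(y_s)(x_t^j - x_s^j)$ as $(y_t - y_u - f_j(y_u)(x_t^j-x_u^j)) + (y_u - y_s - f_j(y_s)(x_u^j-x_s^j)) + (f_j(y_u)-f_j(y_s))(x_t^j-x_u^j)$. The first two terms are controlled inductively; the cross term is handled by writing $f_j(y_u)-f_j(y_s) \approx \partial_q f_j(y_s)(y_u^q-y_s^q)$ up to a H\"older error of order $|y_u-y_s|^{\gamma-1}$, and then using $|y_u-y_s|\lesssim |x_u-x_s| + (\text{error}) \lesssim \omega(s,u)^{1/p}$ together with $|x_t^j-x_u^j|\le\omega(u,t)^{1/p}$. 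This yields a superadditivity inequality of the shape $\Gamma(s,t) \le \Gamma(s,u)+\Gamma(u,t) + C\,\omega(s,t)^{\gamma/p}$ (after choosing $u$ so that $\omega(s,u),\omega(u,t)\le\tfrac12\omega(s,t)$, which is possible by continuity of $\omega$), and a standard lemma then gives $\Gamma(s,t)\le C'\omega(s,t)^{\gamma/p}$ with a constant absorbing the geometric series $\sum 2^{-k(\gamma/p-1)}$. One also needs the a priori bound $|y_l-y_k|\le C\,\omega(t_k,t_l)^{1/p}$, which is proved by the same kind of induction (or extracted en route), and which is what keeps the trajectories in $B$ for $\tau$ small and makes the truncation legitimate.

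Once the uniform estimate holds, I would take a sequence of subdivisions whose mesh tends to $0$ and whose point sets are nested (e.g.\ successive dyadic refinements), obtaining piecewise-defined approximants that are equibounded and, by the $\omega^{1/p}$-bound, equicontinuous; by Arzel\`a--Ascoli a subsequence converges uniformly to some continuous $y(t)$ on $[0,\tau]$. Passing to the limit in the inequality $|y_l^i-y_k^i-f_j^i(y_k)(x_l^j-x_k^j)|\le C\,\omega(t_k,t_l)^{\gamma/p}$ (using continuity of $f$ and of $x$) shows that $y$ satisfies \eqref{eq2} at all pairs of subdivision points, and then for all $s<t$ by density and continuity, with $\tilde\omega=\omega$ and $\theta(\delta)=C\delta^{\gamma/p}=o(\delta)$. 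Finally, to get a maximal $\tau$: let $\tau$ be the supremum of times up to which a solution (obtained this way, patched together) exists and stays bounded; if $\tau<T$ and $y$ stayed bounded as $t\to\tau$ one could restart from a point near $\tau$ using the same local construction and extend past $\tau$, a contradiction, so $|y(t)|\to\infty$ as $t\to\tau$.

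The main obstacle I anticipate is the cross-term estimate in the induction, specifically getting the exponent $\gamma/p$ rather than something weaker: this requires that the $o(\delta)$ defect be genuinely of order $\delta^{\gamma/p}$, which forces a careful bookkeeping of how the Euler error $y_u - y_s - f_j(y_s)(x_u^j-x_s^j)$ (size $\omega(s,u)^{\gamma/p}$) interacts with the H\"older modulus of $\partial f$ and with the accumulated error along the subdivision — and one must verify the bound is uniform over all subdivisions, not merely for each fixed one, so that the Arzel\`a--Ascoli limit inherits it. Keeping the constant $C$ from blowing up as the subdivision refines (the geometric-series summation) is the crux, and it is precisely here that $\gamma>p$, equivalently $\gamma/p>1$, is used.
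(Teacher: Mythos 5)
Your proposal follows the same strategy as the paper: establish a uniform bound of the form $|y_l-y_k-f_j(y_k)(x_l^j-x_k^j)|\le C\,\omega(t_k,t_l)^{\gamma/p}$ (with $\gamma/p>1$) for the Euler scheme by a binary ``halving'' induction, then extract a limit along refining partitions and verify Definition~\ref{def1}; the maximal-time/explosion argument is also essentially the one in the paper. Two technical points in your sketch need attention, though neither affects the overall strategy.

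First, the cross-term estimate. You propose to write $f_j(y_u)-f_j(y_s)\approx\partial_qf_j(y_s)(y_u^q-y_s^q)$ with a remainder of order $|y_u-y_s|^{\gamma-1}$. But in the regime $1\le p<\gamma\le2$, the hypothesis is $f\in C^{\gamma-1}$ with $\gamma-1\in(0,1]$, so $f$ is merely H\"older of exponent $\gamma-1$ and need not be differentiable at all: there is no $\partial_qf_j$ to expand against. The correct (and simpler) step is to use the H\"older bound directly, $|f_j(y_u)-f_j(y_s)|\le\|f\|_{C^{\gamma-1}}\,|y_u-y_s|^{\gamma-1}\lesssim\omega(s,u)^{(\gamma-1)/p}$, which combined with $|x_t-x_u|\le\omega(u,t)^{1/p}$ gives the needed $\omega(s,t)^{\gamma/p}$; the Taylor expansion you describe belongs to the $2\le p<\gamma\le3$ case treated later in the paper.

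Second, the choice of the intermediate point. In the discrete induction one cannot in general find a \emph{subdivision point} $u=t_m$ with both $\omega(s,u)\le\tfrac12\omega(s,t)$ and $\omega(u,t)\le\tfrac12\omega(s,t)$; ``continuity of $\omega$'' only helps if $u$ can be any real number, whereas here it must be one of finitely many mesh points (a single step of the partition can consume more than half of $\omega$). The paper's fix is to take $l$ to be the \emph{largest} index with $\omega_{kl}\le\tfrac12\omega_{km}$; then $\omega_{l+1,m}<\tfrac12\omega_{km}$, and one splits off the single interval $[t_l,t_{l+1}]$ on which the local defect $I_{l,l+1}$ is exactly zero, applying the decomposition twice. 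Your account should incorporate this step, since it is precisely what makes the constant in the geometric-series summation close without assuming the mesh is ``balanced''.
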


Example 3 in section \ref{oe} shows that Theorem \ref{th2} is sharp to the
extent that existence can fail for $f\in C^{p-1}$, $1<p<2$. (However when $p=1$, i.e. $x$ has bounded variation, it can be improved;
in this case it suffices that $f$ be continuous).\vspace{.2cm}

\begin{thm}\label{th1} Let $f\in C^\gamma$ and $y_0\in\R^n$. Then the
solution $y(t)$ of (\ref{eq1}) given by Theorem \ref{th2} is unique in the
sense that if $t<\tau$ and $\tilde{y}$ is another solution of (\ref{eq1})
on $[0,t]$ then $\tilde{y}=y$ on $[0,t]$.

Moreover, if $t<\tau$ and $\epsilon>0$, we can find $\delta>0$ so that
if $0=t_0<\cdots<t_K=t$ with $t_k-t_{k-1}<\delta$ for each $k$, then
\[|y_k-y(t_k)|<\epsilon\]
for each $k$, where $y_k$ is given by (\ref{eq3}).
\end{thm}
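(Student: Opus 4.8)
The plan is to prove Theorem~\ref{th1} in two stages: first uniqueness of the solution, then convergence of the Euler scheme, both resting on the same core estimate. The key tool should be a Gronwall-type inequality adapted to the $p$-variation setting. Specifically, for two candidate solutions $y$ and $\tilde y$ (or for a solution $y$ and its Euler approximation), I would set $e(t)=|y(t)-\tilde y(t)|$ and try to show that on a sufficiently short interval the difference satisfies an estimate of the form
\[e(t)\leq e(s)+ C\,e(s)\,\bigl(\omega(t)-\omega(s)\bigr)^{1/p}+ (\text{higher-order terms in }\omega(t)-\omega(s)),\]
or, more precisely, a discrete/superadditive version summed over a partition, from which one concludes that if $e$ vanishes at the left endpoint then it vanishes throughout. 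Since $f\in C^\gamma$ with $\gamma>1$, $f$ is locally Lipschitz, so $|f^i_j(y(s))-f^i_j(\tilde y(s))|\leq L\,e(s)$ on the relevant compact set; the point of the defining inequality~(\ref{eq2}) with $\theta(\delta)=o(\delta)$ is that the ``excess'' beyond the Euler step is negligible compared to the linear term, which is what makes the Gronwall argument close.

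For uniqueness, I would fix $t<\tau$, work on a compact set containing the ranges of both $y$ and $\tilde y$ up to time $t$ (using continuity), and on that set treat $f$ as globally $C^\gamma$ with bounded derivatives (this is the standard localization; alternatively invoke $C_0^\gamma$). Writing out $y(t)-y(s)$ and $\tilde y(t)-\tilde y(s)$ via~(\ref{eq2}), subtracting, and using the Lipschitz bound on $f$ together with the $p$-variation bound $|x(t)-x(s)|\leq(\omega(t)-\omega(s))^{1/p}$, I get
\[|(y-\tilde y)(t)-(y-\tilde y)(s)|\leq L\,e(s)\,\bigl(\omega(s,t)\bigr)^{1/p}+ \theta(\tilde\omega(s,t))+\tilde\theta(\hat\omega(s,t)).\]
The $\theta$-terms are $o$ of the respective $\omega$-increments, hence, crucially, can be absorbed or shown to sum to something controlled by a common increasing function. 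Partitioning $[s,t]$ finely and iterating this inequality along the partition — exploiting superadditivity of $\omega$ and the fact that $(\omega(s,t))^{1/p}$ is small and can be made to have arbitrarily small ``total'' by refining — should force $e\equiv 0$ first on a short interval $[0,h]$, and then a continuation/connectedness argument extends uniqueness up to any $t<\tau$.

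For the convergence statement, the structure is the same but $\tilde y$ is replaced by the piecewise-constant (or piecewise-Euler) interpolant of the sequence $y_k$ defined by~(\ref{eq3}). The recurrence~(\ref{eq3}) says exactly that the Euler path satisfies the analogue of~(\ref{eq2}) \emph{at the partition points} with zero error, and between partition points with an error governed by $\sup_k(t_{k+1}-t_k)$ together with the modulus of continuity of $x$; so as $\delta\to0$ this plays the role of a $\theta$-term. One then runs the same Gronwall iteration comparing $y(t_k)$ with $y_k$: the error at step $k+1$ is at most $(1+L(\omega(t_k,t_{k+1}))^{1/p})$ times the error at step $k$, plus the $o$-terms coming both from the genuine solution's defect in~(\ref{eq2}) and from the Euler scheme's defect; taking the product over $k$ and using $\prod(1+La_k)\leq\exp(L\sum a_k)$ with $\sum a_k$ bounded (again by superadditivity of $\omega^{1/p}$ one must be a little careful, but on a short interval it is small, and on $[0,t]$ one chains short intervals), the accumulated error is $O(\text{defect})\to0$.

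The main obstacle I anticipate is making the Gronwall iteration genuinely work with the $1/p$-exponent: unlike the classical case where increments of the driver are $O(t_{k+1}-t_k)$ and sums telescope, here $\sum_k(\omega(t_k,t_{k+1}))^{1/p}$ need not be small merely because the mesh is small — it is the \emph{number} of intervals times a small power that matters. The fix, which I expect is what the paper does, is to first establish the result on an interval $[0,h]$ with $\omega(0,h)$ small (so that even the crude bound $\sum_k(\omega(t_k,t_{k+1}))^{1/p}\leq (\text{number of intervals})\cdot(\text{max increment})^{1/p}$ is replaced by a better one, or one works directly with a single-interval estimate iterated only boundedly many times), and then patch finitely many such intervals together to cover $[0,t]$. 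Getting the bookkeeping of the $o(\delta)$ error functions right across this patching — ensuring the ``$\theta$'' for the difference is still $o$ of some fixed increasing function — is the delicate point; everything else is routine localization and estimation.
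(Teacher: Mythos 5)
Your plan to run a Gronwall-type iteration directly on $e(t)=|y(t)-\tilde y(t)|$ does not close, and this is not a mere bookkeeping issue. Writing out the defining inequality~(\ref{eq2}) for both solutions, subtracting, and using the Lipschitz property of $f$ gives, over a single subinterval,
\[
e(t_{k+1}) \leq e(t_k)\bigl(1+L\,\omega_{k,k+1}^{1/p}\bigr)+2\,\theta(\tilde\omega_{k,k+1}),
\]
and iterating yields a compounding factor of the form $\prod_k\bigl(1+L\,\omega_{k,k+1}^{1/p}\bigr)\leq\exp\bigl(L\sum_k\omega_{k,k+1}^{1/p}\bigr)$. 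You correctly observe that $\sum_k\omega_{k,k+1}^{1/p}$ is not small; but the situation is worse than you suggest: since $p>1$, refining the partition makes this sum \emph{diverge} ($K$ intervals of size $\omega/K$ contribute $\sim K^{1-1/p}\omega^{1/p}\to\infty$). Your proposed fix---first working on a short interval $[0,h]$, then patching---does not cure this, because the divergence occurs \emph{within} each short interval as the sub-partition is refined; it is a scaling obstruction, not a size-of-interval obstruction. So the argument as sketched does not reach the conclusion $e\equiv0$.

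The paper's proof sidesteps the stepwise Gronwall entirely and uses instead the discrete-flow analogue of a Gronwall \emph{output}, namely Lemma~\ref{l1}(b): the Euler map from initial data to $y_k$ is Lipschitz with a constant $M'$ that is \emph{independent of the partition}. That bound is not obtained by composing per-step $(1+L\omega_{k,k+1}^{1/p})$ factors (which would blow up), but by applying the main sewing-type estimate, Lemma~\ref{l1}(a), to the linearized recurrence for $z_k=\partial y_k/\partial y_0$: Lemma~\ref{l1}(a) gives $|z_k|\lesssim1$ uniformly, and integrating in the initial condition gives the partition-independent $M'$. With this in hand, the paper compares the solution $y$ to the Euler scheme directly: launch $z_l^{(k)}$, an Euler chain started at $z_k^{(k)}=y(t_k)$, observe that $|z_{k+1}^{(k)}-y(t_{k+1})|\leq\theta(\tilde\omega_{k,k+1})$ by~(\ref{eq2}), and propagate each such one-step defect through the remaining Euler steps with loss only $M'$ via Lemma~\ref{l1}(b). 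Telescoping $z_l^{(0)}-y(t_l)=\sum_{k<l}\bigl(z_l^{(k)}-z_l^{(k+1)}\bigr)$ gives $|z_l^{(0)}-y(t_l)|\leq M'\sum_k\theta(\tilde\omega_{k,k+1})\to0$ as the mesh shrinks (using $\theta(\delta)=o(\delta)$ and additivity of $\tilde\omega$ increments). This simultaneously yields Euler convergence and, since the Euler scheme does not know which solution it is being compared against, uniqueness. The essential ingredient you are missing is precisely this partition-uniform Lipschitz stability of the discrete flow, itself a consequence of the $\gamma/p$-estimate of Lemma~\ref{l1}(a), and without it the exponential Gronwall factor cannot be controlled.
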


Example 1 in section \ref{oe} shows that, if $\gamma<p$, uniqueness fails for a
suitable choice of $f\in C^\gamma$.

These theorems will be proved by analysis of the discrete approximation
(\ref{eq3}).\vspace{.3cm}\\
{\bf Analysis of the discrete problem (\ref{eq3}).}

Let $x_0,\cdots,x_K\in\R^d$, with $x_k=(x_k^1,\cdots,x_k^d)$, and
suppose for $0\leq k\leq l\leq K$ we are given $\omega_{kl}\geq 0$ such that
$w_{km}\geq\omega_{kl}+\omega_{lm}$ when $k\leq l\leq m$ and
$|x_l^r-x_k^r|^p\leq\omega_{kl}$.

Note that with $\omega_{kl}=\omega(t_l)-\omega(t_k)$ and $x_k=x(t_k)$ this is
just what we get from the problem described in the previous section. However
for the purposes of this section we can forget about the variable $t$.

Given $y_0\in\R^n$, and $f\in C_0^{\gamma-1}$ we define $y_k\in\R^n$ for
$k=1,2,\cdots,K$  by the recurrence relation (\ref{eq3}). Observe that, if we
let $z_k^{iN}=\partial y_k^i/\partial y_0^N$ then
\[z_{k+1}^{iN}=z_k^{iN}+\partial_qf_j^i(y_k)z_k^{qN}(x_{k+1}^j-x_k^j)\]
which is the recurrence relation of the type (\ref{eq3}) obtained when $f$ is
replaced by $F(y)=(\{f_j^i(y)\},\{\partial_qf_j^i(y)\})$, taking values
in $\R^{n(n+1)}\times\R^d$. This observation enables us to apply results
on solutions of (\ref{eq3}) to the derivatives $z_k^{im}$.

Let
\[I_{kl}^i=y_l^i-y_k^i-f_j^i(y_k)(x_l^j-x_k^j)\]

The following is our main technical result:

\begin{lemma}\label{l1} (a) There are positive numbers $C$ and $M$,
depending only on $n,d,\gamma,p,\omega_{0K}$ and $\|f\|_{\gamma-1}$, such that if
$0\leq k\leq l\leq K$ then $|I_{kl}^i|\leq M\omega_{kl}^{\gamma/p}$
and $|y_l-y_k|\leq C\omega_{kl}^{1/p}$.\vspace{.2cm}\\
(b) Suppose now $f\in C_0^\gamma$. Let $\tilde{y}\in\R^n$ and let
$\tilde{y}_k$ be the corresponding solution of the recurrence relation.
There is $M'>0$, depending only on $n,d,\gamma,p,\omega_{0K}$ and
$\|f\|_\gamma$, such that if $0\leq k\leq K$ then
\[|\tilde{y}_k^i-y_k^i|\leq M'\max_i|\tilde{y}_0^i-y_0^i|\]
\end{lemma}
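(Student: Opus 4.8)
The plan is to prove (a) and (b) by a bootstrapping argument in which one first obtains the crude bound $|y_l-y_k|\leq C\omega_{kl}^{1/p}$ on short blocks, then feeds it back into the definition of $I_{kl}^i$ to get the sharper estimate $|I_{kl}^i|\leq M\omega_{kl}^{\gamma/p}$. The central device is a \emph{subadditivity/telescoping} estimate for $I_{kl}^i$: for $k\leq l\leq m$ one writes
\[
I_{km}^i = I_{kl}^i + I_{lm}^i + \bigl(f_j^i(y_l)-f_j^i(y_k)\bigr)(x_m^j-x_l^j),
\]
and the last term is controlled, using $f\in C^{\gamma-1}$ and the H\"older exponent $\gamma-1$, by $\|f\|_{\gamma-1}\,|y_l-y_k|^{\gamma-1}|x_m^j-x_l^j|\leq \text{const}\cdot\omega_{kl}^{(\gamma-1)/p}\omega_{lm}^{1/p}$. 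So if we set $\Phi(k,l)=\sup|I_{kl}^i|$, we get an inequality of the form $\Phi(k,m)\leq \Phi(k,l)+\Phi(l,m)+A\,\omega_{kl}^{(\gamma-1)/p}\omega_{lm}^{1/p}$, and since $(\gamma-1)/p+1/p=\gamma/p>1$ (as $\gamma>1$), a standard Young-type summation lemma (splitting $[k,m]$ at a point where $\omega$ is roughly halved, and iterating) yields $\Phi(k,m)\leq M\,\omega_{km}^{\gamma/p}$ once we know it on one-step intervals — and on a one-step interval $I_{k,k+1}^i=0$ by the very definition (\ref{eq3}). The only subtlety is that $M$ must be shown to depend only on the stated quantities; this is handled by first proving the estimate on any sub-block $[k,l]$ with $\omega_{kl}$ smaller than a threshold $\eta=\eta(n,d,\gamma,p,\|f\|_{\gamma-1})$ (so that $|y_l-y_k|$ stays bounded and the H\"older estimates apply uniformly), then chaining $O(\omega_{0K}/\eta)$ such blocks together to reach the general case, with the constant growing by a controlled factor at each concatenation.

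Establishing the crude bound $|y_l-y_k|\leq C\omega_{kl}^{1/p}$ that the above argument uses goes hand in hand: on a block where $I_{kl}^i$ is already known small we have $|y_l-y_k|\leq |I_{kl}^i|+|f_j^i(y_k)||x_l^j-x_k^j|\leq M\omega_{kl}^{\gamma/p}+\|f\|_{\gamma-1}\,d\,\omega_{kl}^{1/p}$, which is $\leq C\omega_{kl}^{1/p}$ since $\gamma/p\geq 1/p$ and $\omega_{kl}$ is bounded by $\omega_{0K}$; so one runs the two estimates together in a single induction on the "size" of the block, using that $f\in C_0^{\gamma-1}$ is bounded with bounded derivatives so that all the constants are genuinely uniform. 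The fact that $f$ has compact support is what makes $\|f\|_{\gamma-1}$ a finite global constant and lets us avoid any a priori control on where the $y_k$ live.

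For part (b), the key observation is the one already made in the text: the differences $w_k := \tilde y_k - y_k$ satisfy a \emph{linear} recurrence driven by the same increments. Precisely,
\[
w_{k+1}^i = w_k^i + \bigl(f_j^i(\tilde y_k)-f_j^i(y_k)\bigr)(x_{k+1}^j-x_k^j),
\]
and since now $f\in C^\gamma$ with $\gamma\geq 1$, the map $y\mapsto f_j^i(y)$ is Lipschitz on the relevant bounded region (again using compact support), so $f_j^i(\tilde y_k)-f_j^i(y_k)=g_{j,k}^i\,$ where $g_{j,k}^i$ is, up to $C\|f\|_\gamma$-bounded coefficients, a linear function of $w_k$; more usefully one writes $f_j^i(\tilde y_k)-f_j^i(y_k)=\partial_q f_j^i(y_k)\,w_k^q+r_{j,k}^i$ with $|r_{j,k}^i|\leq\text{const}\cdot|w_k|^{\gamma}$ (if $\gamma\leq 2$) or $|w_k|^2$-type remainder. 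Treating the equation for $w$ as a recurrence of type (\ref{eq3}) for the enlarged vector field $F$ — exactly the reduction the paper flags with the $z_k^{iN}$ computation — one applies part (a) to get, on each short block, $|w_l - w_k - (\partial_q f_j^i(y_k)w_k^q)(x_{k+1}^j-x_k^j)|\leq M\,|w_k|\,\omega_{kl}^{\gamma/p}$, hence $|w_l|\leq |w_k|\bigl(1+C\omega_{kl}^{1/p}+M\omega_{kl}^{\gamma/p}\bigr)$. Chaining over the $O(\omega_{0K}/\eta)$ blocks needed to cover $[0,K]$ gives $|w_K|\leq e^{C'\omega_{0K}^{1/p}\cdot(\text{number of blocks})^{1-1/p}\cdots}\,|w_0|$ — i.e. a bound $|\tilde y_k^i - y_k^i|\leq M'\max_i|\tilde y_0^i - y_0^i|$ with $M'$ depending only on the advertised data.

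The main obstacle I expect is the careful accounting in part (a) that keeps the constants $C,M$ dependent only on $(n,d,\gamma,p,\omega_{0K},\|f\|_{\gamma-1})$ and not on $K$ or on the number of subdivision points: the Young-type summation lemma gives a clean local estimate, but turning "small $\omega$-blocks" into the full interval requires a concatenation step whose bookkeeping — ensuring the inductive constant does not blow up as the number of blocks grows — is the delicate point, and it is where the precise superadditivity $\omega_{km}\geq\omega_{kl}+\omega_{lm}$ and the inequality $\gamma/p>1$ are used essentially.
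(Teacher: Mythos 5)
Your proposal for part (a) is essentially the paper's argument: the telescoping identity for $I_{km}^i-I_{kl}^i-I_{lm}^i$, the bootstrap using the crude bound $|y_l-y_k|\leq C\omega_{kl}^{1/p}$, the induction by splitting $[k,m]$ at a point where $\omega$ is (at most) halved together with the base case $I_{k,k+1}=0$, the requirement $\gamma/p>1$ to make $2^{1-\gamma/p}<1$ close the induction, and the separate chaining step to pass from blocks with $\omega_{km}\leq\delta$ to the full interval — this is exactly the structure of the paper's proof. In part (b) you correctly flag the paper's reduction (apply (a) to the augmented system for the derivative flow $z_k^{iN}=\partial y_k^i/\partial y_0^N$), but you then conflate this with a direct Gr\"{o}nwall-type chaining on $w_k=\tilde y_k-y_k$: the $w$-recurrence is \emph{not} of the form (\ref{eq3}), so the displayed inequality $|w_l - w_k - \partial_q f_j^i(y_k)w_k^q(x_l^j-x_k^j)|\leq M|w_k|\omega_{kl}^{\gamma/p}$ is not an immediate corollary of (a). The cleaner route, and the one the paper takes, is to apply (a) to the genuine recurrence satisfied by $z_k^{iN}$ (which \emph{is} of type (\ref{eq3}) for the augmented $F\in C^{\gamma-1}_0$), conclude $\sup_k|z_k^{iN}|\leq\mathrm{const}$, and then deduce $|w_k|\leq(\sup|z|)\,|w_0|$ by integrating the derivative along the segment from $y_0$ to $\tilde y_0$.
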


\begin{proof}
We will use $B_1$, $B_2$ etc to denote constants depending
only on $n,d,\gamma,p$ and the $C^{\gamma-1}$-norm of $f$.

If $k\leq l\leq m$ we have
\Z\label{eq4}I_{km}^i-I_{kl}^i-I_{lm}^i=(f_j^i(y_l)-f_j^i(y_k))(x_m^j-x_l^j)\z
{\bf Claim.} If $\delta>0$ is small enough and $L$ large enough
(depending only on $n,d,\gamma,p$ and the norm of $f$) then $|I_{km}^i|\leq
L\omega_{km}^{\gamma/p}$ whenever $\omega_{km}\leq\delta$.
\vspace{.2cm}\\
{\bf Proof of claim.} We use induction on $m-k$. Note first that
$I_{k,k}=0$ trivially and $I_{k,k+1}=0$ by (\ref{eq3}). Now suppose $k,m$
chosen with $m-k>1$ and suppose the claim holds for smaller values of $m-k$.

 Let $l$ be the largest integer with $k\leq
l<m$ satisfying $\omega_{kl}\leq\frac 12\omega_{km}$. Then
$\omega_{k,l+1}>\frac 12\omega_{km}$ so $\omega_{l+1,m}<\frac
12\omega_{km}$. Then by the inductive hypothesis the claim holds for
$k,l$, i.e. $I_{kl}\leq L\omega_{kl}^{\gamma/p}$. Then $|y_l-y_k|\leq
L\omega_{kl}^{\gamma/p}+B_1\omega_{kl}^{1/p}$. Then provided
\Z\label{eq5}L\delta^{(\gamma-1)/p}\leq B_1\z we have
\Z\label{eq6}|y_l-y_k|\leq2B_1\omega_{kl}^{1/p}\z 
Putting these estimates into (\ref{eq4}) we find that
\[|I_{km}^i|\leq|I_{kl}^i|+|I_{lm}^i|+B_2\omega_{km}^{\gamma/p}\]
In the same way we find that
$|I_{lm}^i|\leq|I_{l,l+1}^i|+|I_{l+1,m}^i|+B_2\omega_{km}^{\gamma/p}$. 
Since $I_{l,l+1}=0$ we get
\[|I_{km}^i|\leq|I_{kl}^i|+|I_{l+1,m}^i|+2B_2\omega_{km}^{\gamma/p}\leq
L(\omega_{kl}^{\gamma/p}+\omega_{l+1,m}^{\gamma/p})+2B_2\omega_{km}
^{\gamma/p}\leq(2^{1-\gamma/p}L+2B_2)\omega_{km}^{\gamma/p}\]
and provided \Z\label{eq7}(1-2^{1-\gamma/p})L\geq 2B_2\z we conclude that
$|I_{km}^i|\leq L\omega_{km}^{\gamma/p}$ which completes the induction,
provided we choose $L$ and $\delta$ to satisfy (\ref{eq7}) and (\ref{eq5}).
The claim is proved.
 
For intervals with $\omega_{kl}\leq\delta$, part (a) of the lemma now
follows from the claim, and the fact that the proof of (\ref{eq6}) now holds
for any $k,l$. If $\omega_{kl}>\delta$, we can decompose
$k=k_0<k_1<\cdots<k_r=l$ where either $\omega_{k_uk_{u+1}}\leq\delta$ or
$k_{u+1}=k_u+1$ for each $u$, and $r\leq1+2\delta^{-1}\omega_{kl}$. In
either case $|y_{k_{u+1}}-y_{k_u}|\leq2B_1\omega_{k_uk_{u+1}}^{1/p}$.
Summing gives $|y_l-y_k|\leq(1+2\delta^{-1}\omega_{kl})2B_1\omega_{kl}^{1/p}$,
and then $|I_{kl}|\leq|y_l-y_k|+B_1\omega_{kl}^{1/p}\leq{\rm
const}\ \omega_{kl}^{\gamma/p}$, using the fact that $\omega_{kl}>\delta$.

To prove (b), we suppose $f\in C_0^\gamma$, and apply (a), using the
observation above, to estimate $z_k^{im}$. We find that, for any choice of
$y_0$, we have $|z_k^{im}|\leq{\rm const}\ \omega_{0k}^{1/p}\leq1$ and (b)
follows.
\end{proof}
{\em Proof of theorems.} We prove theorem \ref{th2} first. Suppose $f\in
C^{\gamma-1}$. Then for $r=1,2,\cdots$ we can find $f_{(r)}\in C^{\gamma-1}
_0$ with $f_{(r)}(y)=f(y)$ for $|y|\leq r$. Now take a sequence of
successively finer partitions $\{{\mathcal P}_m:\ m=1,2,\cdots\}$ of $[0,T]$
with mesh tending to 0. Let $y^{(m)}_k$, which we also write as $y^{(m)}
(t_k)$ be the solution of (3) using the partition ${\mathcal P}_m$. By
passing to a subsequence we can assume that $y^{(m)}(s)$ converges to a limit
$y(s)$ (possibly $\pm\infty$) for each $s\in\cup_m{\mathcal P}_m$.
Let $\tau_r=\sup\{t:\ 0\leq t<T,$ there exists $m_0$ such that $|y^{(m)}
(s)|<r$ for all $m>m_0$ and all $s\in{\mathcal P}_m$ with $0\leq s\leq t\}$;
by applying Lemma \ref{l1}(a) to $f_{(r)}$ we see that $\tau_r$ is
well-defined and positive for all $r>|y_0|$. Also from this lemma, if $0\leq
t<\tau_r$ we have $|y^{(m)}(s)-y^{(m)}(s')|\leq C(r,t)|\omega(s)-\omega(s')|
^{1/p}$ for $s,s'\in{\mathcal P}_m$ with $0\leq s,s'\leq t$ and $m$ large
enough. We then have the same bound for $|y(s)-y(s')|$, and $y(s)$ extends
to $[0,t]$ by continuity. By Lemma \ref{l1}(a) again the bound (\ref{eq2})
holds on $[0,t]$, for every $t<\tau_r$, and for every $r$. Now let
$\tau=\lim\tau_r$; it follows that $y$ is a solution of (1) on $[0,\tau)$.

Now suppose $\tau<T$. It follows from Lemma \ref{l1}(a) applied to $f_{(r+1)}$
that for each $r$ there is $\sigma_r>0$ such that if $s\in{\mathcal P}_m$ for
some $m$ and if $|y^{(m)}|<r$ then $|y^{(m)}(s)|\leq r+1$ for any $s\in
{\mathcal P}_m$ satisfying $t<s<t+\sigma_r$. Next, fix $r$ and choose $t\in
\cup_m{\mathcal P}_m$ such that $t>\tau-\sigma_r$. Then if $|y(t)|<r$ there
exists $m_0$ such that $|y^{(m)}(t)|<r$ for $m>m_0$, and then
$|y^{(m)}(s)|\leq r+1$ for all $m>m_0$ and $s\in{\mathcal P}_m$ with
$t<s<t+\sigma$, contradicting the definition of $\tau$.

So $|y(t)|\geq r$ for $\tau-\sigma_r<t<\tau$, so $|y(t)|\rightarrow\infty$ as
$t\rightarrow\tau$. This proves Theorem \ref{th2}.\vspace{.2cm}

To prove Theorem \ref{th1}, suppose $y$ is a solution of (\ref{eq1}) on
$[0,t]$ and consider a partition $0=t_0<\cdots<t_K=t$. Choose $r$ so that $|y(\tau)|<r$ for
$\tau\in[0,t]$. For $l\geq k$ let $z_l^{(k)}$ be the solution of (\ref{eq3}), with $f_{(r)}$
in place of $f$, with initial value $z_k^{(k)}=y(t_k)$. Then by (\ref{eq2}) $|z_{k+1}^{(k)}-y_{k+1}|
\leq\theta(\tilde{\omega}(t_{k+1})-\tilde{\omega}(t_k)$ and using Lemma \ref{l1}(b) we have for $k<l\leq K$ that
$|z_l^{(k)}-z_l^{(k+1)}|\leq{\rm const}\ \theta(\tilde{\omega}
(t_{k+1})-\tilde{\omega}(t_k))$.
Summing over $k$ we deduce a bound for $z_l^{(0)}-y_l$ which tends to 0 as the
mesh of the partition tends to 0. The conclusions of Theorem \ref{th1} follow.
\vspace{.2cm}\\
{\bf Remark 1.} The estimates given by Lemma \ref{l1} show that any solution
constructed by the method described in the above proof will satisfy the
following stronger form of (\ref{eq2}):
\Z\label{eq8}|y^i(t)-y^i(s)-f_j^i(y(s))(x^j(t)-x^j(s))|=O(\omega(t)-\omega(s))
^{\gamma/p}\z
If $f\in C^\gamma$ then the uniqueness shows that any solution will
satisfy this stronger inequality. Then we can take
$\theta(\delta)=M\delta^{\gamma/p}$ for a suitable constant $M$, and the
proof of Theorem 1 then gives the bound
\Z\label{eq9}|y_k-y(t_k)|\leq C\sum_{j=1}^k\omega_{j-1,j}^{\gamma/p}\z

When we only have $f\in C^{\gamma-1}$,
then we can still show that any solution satisfies (\ref{eq8}), by
using a modified form of Lemma \ref{l1}, as follows:

Given a solution $y$ of (\ref{eq1}) satisfying (\ref{eq2}) on $[0,T]$, and
given $\epsilon>0$, choose $\eta>0$ so that $\theta(\eta)<\epsilon\eta$,
and choose a partition with $\tilde{\omega}_{k,k+1}<\delta$. Then writing
$y_k=y(t_k)$, we have
\[|y_{k+1}^i-y_k^i-f_j^i(y_k)(x_{k+1}^j-x_k^j)|\leq\epsilon\tilde{\omega}
_{k,k+1}\]
and using this instead of (\ref{eq3}) we follow the proof of Lemma \ref{l1};
we obtain $|I_{km}^i|\leq L\omega_{km}^{\gamma/p}+\epsilon\tilde{\omega}_{km}$
whenever $\omega_{km}\leq\delta$ and $\tilde{\omega}_{km}\leq\epsilon^{-1/2}
\delta$, where now $\delta$ should satisfy $\epsilon^{1/2}\delta^{1-1/p}+
L\delta^{(\gamma-1)/p}\leq B_1$ and $L$ satisfies (\ref{eq7}) as before.
Letting $\epsilon\rightarrow0$ gives the same estimates as the original form
of Lemma \ref{l1}.\vspace{.2cm}\\
{\bf Remark 2.} If $f\in C^\gamma$, the Euler approximations $y^{(m)}$
used in the proof of Theorem 1 converge to the solution, without the
need to pass to a subsequence. This follows from (\ref{eq9}),
or proved directly as follows.

As in the proof of Theorem \ref{th1}, let $y^{(m)}_k=y^{(m)}(t_k)$ be the
solution of (\ref{eq3}) corresponding to the partition ${\mathcal P}_m$ given
by $0=t_0<t_1<\cdots<t_K$. Let $m'>m$ so that ${\mathcal P}_{m'}$ is a finer
partition. Let $v_k$ be the solution of (\ref{eq3}) for the partition
${\mathcal P}_{m'}$, at the point $t_k$ (which being a point of ${\mathcal 
P}_m$ is also a point of ${\mathcal P}_{m'}$. Then for $l\geq k$ let
$z_l^{(k)}$ be the solution of (\ref{eq3}), for the partition ${\mathcal 
P}_m$, with initial condition $z_k^{(k)}=v_k$. Then by the bound for
$I^i_{kl}$ in Lemma 1(a), applied to the partition ${\mathcal P}_{m'}$, we
have $|z_{k+1}^{(k)}-v_{k+1}|\leq c\omega_{k,k+1}^{\gamma/p}$ and the result
follows by using Lemma 1(b) and summing over $k$, as in the last part of the
proof of Theorem \ref{th1}.

The above argument may be useful for the generalisation of Theorem \ref{th2}
to an infinite-dimensional setting, where the compactness required for the
proof of Theorem \ref{th1} may fail.

\section{The case $2\leq p<3$.}\label{s3}

We now suppose $2\leq p<\gamma\leq3$. In this case (\ref{eq3}) does not give
a sufficiently good approximation, and we need to include higher-order
terms. We can regard (\ref{eq3}) as being obtained from (\ref{eq1}) by
approximating $f_j^i(y)$ by $f_j^i(y_k)$. A better approximation is
\[f_j^i(y)\approx f_j^i(y_k+f_r(y_k)(x^r-x_k^r))\approx f_j^i(y_k)+
\partial_hf_j^i(y_k)f_r^h(y_k)(x^r-x_k^r)\]
To solve (\ref{eq1}) using this approximation, we have to integrate
$(x^r-x_k^r)dx^j$. With this as motivation, we attempt to define
$A^{rj}(s,t)$ for $s\leq t$ by $dA^{rj}(s,t)=\{x^r(t)-x^r(s)\}dx^j(t)$
with $A^{r,j}(s,s)=0$. We have then the problem of interpreting this
equation. The solution adopted in \cite{tjl} is to make the following
assumption:\vspace{.2cm}\\
{\bf Assumption 1.} We suppose as given the
quantities $A^{rj}(s,t)$ for $1\leq r,j\leq d$ and $0\leq s\leq t\leq T$
subject to the natural consistency condition
\[A^{rj}(s,u)=A^{rj}(s,t)+A^{rj}(t,u)+(x^r(t)-x^r(s))(x^j(u)-x^j(t))\]
whenever $s\leq t\leq u$. We also assume the bound
$|A^{rj}(s,t)|^{p/2}\leq\omega(t)-\omega(s)$ (redefining $\omega(t)$ if
necessary).\vspace{.2cm}

We remark that, at least if $p>2$, it is not hard to prove the existence
of such $A^{rj}(s,t)$ satisfying the above conditions, for a given
choice of $x(t)$. There will be many possible choices of $A^{rj}(s,t)$;
given one such, then $\tilde{A}^{rj}(s,t)=A^{rj}(s,t)+\rho(t)-\rho(s)$
will be another, as long as $\rho(t)$ has finite $\frac p2$-variation.
Different choices lead to different interpretations of (\ref{eq1}).

We now interpret (\ref{eq1}) as follows:
\begin{defn}\label{def2}
We say $y(t)$ is a solution of (1) on $[0,T]$ if $y^i(0)=y_0^i$ and there exists a
continuous increasing function $\tilde{\omega}$ on $[0,T]$ and a non-negative
function $\theta$ on $[0,\infty)$ such that $\theta(\delta)=o(\delta)$ as
$\delta\rightarrow 0$ and such that
\Z\label{eq10}|y^i(t)-y^i(s)-f_j^i(y(s))(x^j(t)-x^j(s))-f_r^h(y(s))\partial_h
f_j^i(y(s))A^{rj}(s,t)|\leq\theta(\tilde{\omega}(t)-\tilde{\omega}(s))\z
for all $s,t$ with $0\leq s<t\leq T$.
\end{defn}

As before consider discrete approximations to a solution: given
$0=t_0<t_1<\cdots<t_K$, let $x_k=x(t_k)$ and, given $y_0$, define $y_k$
by the recurrence relation
\Z\label{eq11}y_{k+1}^i=y_k^i+f_j^i(y_k)(x_{k+1}^j-x_k^j)+f_r^h(y_k)\partial_h
f_j^i(y_k)A^{rj}(t_k,t_{k+1})\z
Then we can state the following:

\begin{thm}\label{th4}Let $f\in C^{\gamma-1}$ and $y_0\in\R^n$. Then there
exists $\tau$, with $0<\tau\leq T$, and a solution $y(t)$ of (1)
for $0\leq t<\tau$, such that if $\tau<T$ then $|y(t)|\rightarrow\infty$
as $t\rightarrow\tau$.
\end{thm}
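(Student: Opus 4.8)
The plan is to mirror exactly the structure used for Theorem \ref{th2} in the case $p<2$, with the modified recurrence (\ref{eq11}) replacing (\ref{eq3}) and an analogue of Lemma \ref{l1} as the technical engine. First I would establish the modified-Euler version of Lemma \ref{l1}: with
\[I_{kl}^i=y_l^i-y_k^i-f_j^i(y_k)(x_l^j-x_k^j)-f_r^h(y_k)\partial_hf_j^i(y_k)A^{rj}(t_k,t_l),\]
one computes the near-additivity defect $I_{km}^i-I_{kl}^i-I_{lm}^i$ using (\ref{eq11}) and the consistency condition for $A^{rj}$. The point is that the "bad" cross term $(x^r(t)-x^r(s))(x^j(u)-x^j(t))$ coming from the $A$-consistency relation cancels against the first-order Taylor expansion of $f_j^i(y_l)-f_j^i(y_k)$, leaving a remainder controlled by $|y_l-y_k|^{\gamma-1}|x_m-x_l|$ together with terms like $|y_l-y_k|\,|A^{rj}(t_l,t_m)|$ — all of order $\omega_{km}^{\gamma/p}$ once one knows the a priori bound $|y_l-y_k|\leq C\omega_{kl}^{1/p}$. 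So the induction on $m-k$ runs as before: pick $l$ maximal with $\omega_{kl}\leq\frac12\omega_{km}$, feed the inductive bound into the defect identity, and close the estimate provided $L$ is large and $\delta$ small in the manner of (\ref{eq5}) and (\ref{eq7}). This gives $|I_{kl}^i|\leq M\omega_{kl}^{\gamma/p}$ and $|y_l-y_k|\leq C\omega_{kl}^{1/p}$, first for $\omega_{kl}\leq\delta$ and then, by the same chaining argument, for all $k,l$.

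With this lemma in hand, the proof of Theorem \ref{th4} is essentially a transcription of the proof of Theorem \ref{th2}. I would first reduce to $f\in C_0^{\gamma-1}$ by taking cutoffs $f_{(r)}$ agreeing with $f$ on $|y|\leq r$. Then take successively finer partitions $\mathcal P_m$ of $[0,T]$, form the modified-Euler iterates $y^{(m)}(t_k)$, pass to a subsequence converging on $\cup_m\mathcal P_m$, and define $\tau_r$ as the supremum of times up to which the iterates stay bounded by $r$ for all large $m$. The uniform Hölder bound $|y^{(m)}(s)-y^{(m)}(s')|\leq C(r,t)\,\omega(s,s')^{1/p}$ from the lemma lets the limit $y$ extend continuously to $[0,\tau_r)$, and the $I$-bound, applied to ever-finer partitions and passed to the limit, yields exactly the defining inequality (\ref{eq10}) with $\theta(\delta)=M\delta^{\gamma/p}$ (hence $o(\delta)$ since $\gamma>p$), so $y$ solves (\ref{eq1}) on $[0,\tau)$ where $\tau=\lim\tau_r$. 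The blow-up alternative when $\tau<T$ is proved verbatim as in Theorem \ref{th2}: a local boundedness estimate from the lemma applied to $f_{(r+1)}$ shows that if $|y(t)|<r$ for some $t$ close to $\tau$ then the iterates stay below $r+1$ slightly past $t$, contradicting the definition of $\tau$; hence $|y(t)|\to\infty$.

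The main obstacle is the modified Lemma \ref{l1}(a), specifically verifying that the defect identity for $I_{km}^i$ produces only terms of order $\omega_{km}^{\gamma/p}$. Expanding $f_j^i(y_l)-f_j^i(y_k)=\partial_hf_j^i(y_k)(y_l^h-y_k^h)+O(|y_l-y_k|^{\gamma-1})$ and substituting $y_l^h-y_k^h=f_r^h(y_k)(x_l^r-x_k^r)+O(|y_l-y_k|^2)$ (valid since $\gamma\le 3$ and using the lemma's own bound on $I_{kl}$), the leading term $\partial_hf_j^i(y_k)f_r^h(y_k)(x_l^r-x_k^r)(x_m^j-x_l^j)$ is precisely what the $A$-consistency term supplies with opposite sign, so the cancellation is exact; one must also control the variation of $f_r^h\partial_hf_j^i$ along the path, which requires $f\in C^{\gamma-1}$ so that $\partial_h f$ is $C^{\gamma-2}$ and the product is Hölder of the right exponent. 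The bookkeeping of these several error terms — powers of $|y_l-y_k|$ times $|x_m-x_l|$ or times $|A^{rj}(t_l,t_m)|^{1/2}$-type quantities, each bounded via $\omega^{1/p}$ and $\omega^{2/p}$ — is the delicate part; once it is done, the induction and the rest of the argument are routine adaptations of Section \ref{s2}. One should also record, as in Remark 1, that the estimate (\ref{eq10}) in fact holds with $\tilde\omega=\omega$ and $\theta(\delta)=O(\delta^{\gamma/p})$, which is what the lemma directly gives.
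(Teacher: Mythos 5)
Your proposal matches the paper's proof: the technical engine is exactly the paper's Lemma \ref{l2} (your modified $I_{kl}^i$ is the paper's $J_{kl}^i$), with the same defect identity (\ref{eq12}) in which $g^i_{rj}(y_k)(x_l^r-x_k^r)(x_m^j-x_l^j)$ from the $A$-consistency is absorbed by the first-order Taylor term of $f^i_j(y_l)-f^i_j(y_k)$, leaving the remainder $R_{kl,j}^i$, the term $\partial_h f^i_j(y_k)I_{kl}^h$, and the increment of $g^i_{rj}$. The deduction of Theorem \ref{th4} from that lemma is then, as you say and as the paper says, a verbatim transcription of the proof of Theorem \ref{th2}.
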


\begin{thm}\label{th3} Let $f\in C^\gamma$ and $y_0\in\R^n$. Then the
solution $y(t)$ of (\ref{eq1}) given by Theorem \ref{th4} is unique in the
sense that if $t<\tau$ and $\tilde{y}$ is another solution of (\ref{eq1})
on $[0,t]$ then $\tilde{y}=y$ on $[0,t]$.
for $0\leq t<\tau$, such that either $\tau=T$ or $|y(t)|\rightarrow\infty$
as $t\rightarrow\tau$.

Moreover, if $t<\tau$ and $\epsilon>0$, we can find $\delta>0$ so that
if $0=y_0<\cdots<t_K=t$ with $t_k-t_{k-1}<\delta$ for each $k$, then
\[|y_k-y(t_k)|<\epsilon\]
for each $k$, where $y_k$ is given by (\ref{eq11}).
\end{thm}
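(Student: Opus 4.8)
The plan is to mirror the structure of Section~\ref{s2}, replacing the Euler scheme by the modified scheme (\ref{eq11}) and carrying the extra $A^{rj}$-term through every estimate. First I would state and prove the analogue of Lemma~\ref{l1}: define
\[I_{kl}^i=y_l^i-y_k^i-f_j^i(y_k)(x_l^j-x_k^j)-f_r^h(y_k)\partial_hf_j^i(y_k)A^{rj}(t_k,t_l),\]
and aim for the bounds $|I_{kl}^i|\le M\omega_{kl}^{\gamma/p}$ and $|y_l-y_k|\le C\omega_{kl}^{1/p}$, together with the Lipschitz-in-initial-data statement (b) for $f\in C_0^\gamma$. The key computation is the analogue of (\ref{eq4}): using the consistency relation for $A^{rj}$ one finds that $I_{km}^i-I_{kl}^i-I_{lm}^i$ equals a sum of terms of the form (increment of $f$ or of $\partial f\cdot f$ between $y_k$ and $y_l$) times ($x_m-x_l$ or $A^{rj}(t_l,t_m)$), plus a term $\partial_hf_j^i(y_k)\{f_r^h(y_k)-f_r^h(y_l)\}A^{rj}(t_l,t_m)$ coming from the mismatch of base points in the $A$-term. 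Since $f\in C^{\gamma-1}$ with $\gamma-1>p-1\ge1$, the first derivatives of $f$ are H\"older of exponent $\gamma-2$ (or Lipschitz if $\gamma-2\ge1$), and using $|y_l-y_k|\le 2B_1\omega_{kl}^{1/p}$ together with $|x_m^j-x_l^j|\le\omega_{lm}^{1/p}$ and $|A^{rj}(t_l,t_m)|\le\omega_{lm}^{2/p}$, each of these products is bounded by a constant times $\omega_{km}^{\gamma/p}$ — this is exactly where the restriction $\gamma\le3$ enters, so that $1/p+(\gamma-2)/p=\,(\gamma-1)/p$ and $2/p$ both exceed... more precisely so that the relevant exponents sum to at least $\gamma/p$.

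With that reduction in hand, the claim $|I_{km}^i|\le L\omega_{km}^{\gamma/p}$ for $\omega_{km}\le\delta$ follows by exactly the induction of Lemma~\ref{l1}: pick the largest $l$ with $\omega_{kl}\le\frac12\omega_{km}$, so $\omega_{l+1,m}<\frac12\omega_{km}$; note $I_{k,k}=0$ and $I_{k,k+1}=0$ by (\ref{eq11}); invoke the inductive hypothesis on $[k,l]$; deduce the Lipschitz bound (\ref{eq6}) provided $L\delta^{(\gamma-1)/p}\le B_1$ (now with an extra smallness requirement to absorb the $A$-contribution, something like $L\delta^{(\gamma-1)/p}+\delta^{(2-p)/p}\cdot(\text{const})\le B_1$, harmless since $2-p\le0$ actually forces care — one instead uses $\delta$ small so that $\omega_{kl}^{1/p}\le$ const and the $A$-term $\omega_{kl}^{2/p}$ is dominated); split off $I_{l,l+1}=0$ as before; and close the induction using $(1-2^{1-\gamma/p})L\ge 2B_2$. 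The passage from $\omega_{kl}\le\delta$ to general $\omega_{kl}$ by chopping into $\le 1+2\delta^{-1}\omega_{kl}$ pieces is verbatim. Part~(b) again follows by differentiating (\ref{eq11}) in $y_0$: $z_k^{iN}=\partial y_k^i/\partial y_0^N$ satisfies a recurrence of the same form with $f$ replaced by $F(y)=(\{f_j^i\},\{\partial_qf_j^i\},\{\partial_h f_j^i\cdot f_r^h\})$ — one checks this composite lies in $C_0^{\gamma-1}$ when $f\in C_0^\gamma$ — and part~(a) applied to $F$ gives $|z_k^{iN}|\le$ const $\le1$.

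Given the modified Lemma~\ref{l1}, the proofs of Theorems~\ref{th4} and \ref{th3} are transcriptions of the proofs of Theorems~\ref{th2} and \ref{th1}. For existence: truncate $f$ to $f_{(r)}\in C_0^{\gamma-1}$, take successively finer partitions $\mathcal P_m$, extract a subsequence along which $y^{(m)}(s)$ converges on $\cup_m\mathcal P_m$, define the blow-up times $\tau_r$ using Lemma~\ref{l1}(a) for $f_{(r)}$, get the uniform modulus $|y^{(m)}(s)-y^{(m)}(s')|\le C(r,t)\omega(s,s')^{1/p}$ hence a continuous limit $y$ on $[0,\tau_r)$, and check that (\ref{eq10}) holds in the limit because $I_{kl}^i\to y^i(t)-y^i(s)-f_j^i(y(s))(x^j(t)-x^j(s))-f_r^h(y(s))\partial_hf_j^i(y(s))A^{rj}(s,t)$ and $|I_{kl}^i|\le M\omega(s,t)^{\gamma/p}=o(\omega(s,t))$; the blow-up alternative at $\tau=\lim\tau_r$ is the same $\sigma_r$-argument. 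For uniqueness and convergence of the scheme: given a solution $y$ on $[0,t]$ and a partition, let $z_l^{(k)}$ solve (\ref{eq11}) with $f_{(r)}$ from $z_k^{(k)}=y(t_k)$; by (\ref{eq10}), $|z_{k+1}^{(k)}-y(t_{k+1})|\le\theta(\tilde\omega_{k,k+1})$, Lemma~\ref{l1}(b) propagates this to $|z_l^{(k)}-z_l^{(k+1)}|\le$ const$\cdot\theta(\tilde\omega_{k,k+1})$, and summing over $k$ bounds $z_l^{(0)}-y_l=y_l-y(t_l)$ by const$\cdot\sum_k\theta(\tilde\omega_{k,k+1})\to0$ as the mesh shrinks, which gives both $\tilde y=y$ and the stated $\epsilon$–$\delta$ estimate. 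I expect the only genuinely new work — the main obstacle — to be the careful bookkeeping in the three-point identity for $I_{kl}^i$: one must verify that the $A^{rj}$ consistency condition produces exactly the cross term $(x^r(t_l)-x^r(t_k))(x^j(t_m)-x^j(t_l))$ which is precisely what is needed to cancel the leading discrepancy between $f_j^i(y_l)$ and $f_j^i(y_k)+\partial_hf_j^i(y_k)f_r^h(y_k)(x^r(t_l)-x^r(t_k))$, leaving behind only genuinely higher-order ($\ge\gamma/p$) remainders; getting the exponent arithmetic and the list of leftover terms right, and confirming that $\gamma\le3$ is exactly the threshold making it work, is where the care is required.
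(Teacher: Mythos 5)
Your plan matches the paper's proof almost step for step: the paper proves exactly the analogue you sketch (its Lemma~\ref{l2}, with $J_{kl}^i$ in the role of your $I_{kl}^i$, and the three-point identity \eqref{eq12} reducing everything to $R_{kl,j}^i$, $\partial_hf_j^i(y_k)I_{kl}^h$ against $x_m-x_l$, and a $g_{rj}^i$-increment against $A_{lm}^{rj}$, each bounded by a constant times $\omega_{km}^{\gamma/p}$), then deduces Theorems~\ref{th3} and \ref{th4} as direct transcriptions of the Section~\ref{s2} arguments via Lemma~\ref{l2}(a),(b). One small correction to your sketch: the extra smallness condition needed in the inductive step is \eqref{eq13}, namely $L\delta^{(\gamma-2)/p}\leq B_2$, which dominates $L\omega_{kl}^{\gamma/p}$ by $B_2\omega_{kl}^{2/p}$ and yields \eqref{eq14}; the $\delta^{(2-p)/p}$ term you wrote would be unbounded as $\delta\to 0$ since $p\geq 2$, so it cannot be the correct constraint.
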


These results will be proved by analysis of the discrete approximation
(\ref{eq11}).\vspace{.3cm}\\
{\bf Analysis of the discrete problem (\ref{eq11}).}

Let $x_0,\cdots,x_K\in\R^d$, with $x_k=(x_k^1,\cdots,x_k^d)$, and
suppose for $0\leq k\leq l\leq K$ given $A_{kl}^{rj}$ for $1\leq r,j\leq
d$ such that
$A_{km}^{rj}=A_{kl}^{rj}+A_{lm}^{rj}+(x_l^r-x_k^r)(x_m^j-x_l^j)$
whenever $k\leq l\leq m$. Suppose also given $\omega_{kl}\geq 0$ for
$0\leq k\leq l\leq K$ such that $w_{km}\geq\omega_{kl}+\omega_{lm}$ when
$k\leq l\leq m$ and $|x_l^r-x_k^r|^p\leq\omega_{kl}$ and
$|A^{rj}_{kl}|\leq\omega_{kl}^{2/p}$.

Given $y_0\in\R^n$, and $f\in C_0^{\gamma-1}$ we define $y_k\in\R^n$ for
$k=1,2,\cdots,K$  by the recurrence relation (\ref{eq11}). Observe that, if we
let $z_k^{iN}=\partial y_k^i/\partial y_0^N$ as before, then
\[z_{k+1}^{iN}=z_k^{iN}+\partial_qf_j^i(y_k)z_k^{qN}(x_{k+1}^j-x_k^j)
+\{\partial_qf_r^h(y_k)z_k^{qN}\partial_hf_j^i(y_k)+f_r^h(y_k)
\partial_{qh}f_j^i(y_k)z_k^{qN}\}A^{rj}\]
which is the recurrence relation of the type (\ref{eq11}) obtained when $f$ is
replaced by $F(y)=(\{f_j^i(y)\},\{\partial_qf_j^i(y)\})$, taking values
in $\R^{n(n+1)}\times\R^d$. 

Let
\[J_{kl}^i=y_l^i-y_k^i-f_j^i(y_k)(x_l^j-x_k^j)-f_r^h(y_k)\partial_h
f_j^i(y_k)A_{k,l}^{rj}\]

The following is our main technical result:

\begin{lemma}\label{l2} (a) There are positive numbers $C$ and $M$,
depending only on $n,d,\gamma,p,\omega_{0K}$ and $\|f\|_{\gamma-1}$, such
that if $0\leq k\leq l\leq K$ then $|J_{kl}^i|\leq
M\omega_{kl}^{\gamma/p}$ and $|y_l-y_k|\leq C\omega_{kl}^{1/p}$.\vspace{.2cm}\\
(b) Suppose now $f\in C_0^\gamma$. Let $\tilde{y}\in\R^n$ and let
$\tilde{y}_k$ be the corresponding solution of the recurrence relation.
There is $M'>0$, depending only on $n,d,\gamma,p,\omega_{0K}$ and
$\|f\|_\gamma$, such that if $0\leq k\leq K$ then \[|\tilde{y}_k^i-y_k^i|\leq
M'\max_i|\tilde{y}_0^i-y_0^i|\]
\end{lemma}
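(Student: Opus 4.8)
The plan is to mimic almost verbatim the proof of Lemma \ref{l1}, carrying along the extra $A^{rj}_{kl}$-term throughout. The basic identity to start from is the analogue of (\ref{eq4}): for $k\leq l\leq m$,
\[
J_{km}^i-J_{kl}^i-J_{lm}^i=\bigl(f_j^i(y_l)-f_j^i(y_k)\bigr)(x_m^j-x_l^j)+\bigl(f_r^h(y_l)\partial_hf_j^i(y_l)-f_r^h(y_k)\partial_hf_j^i(y_k)\bigr)A^{rj}_{lm}-\partial_hf_j^i(y_k)f_r^h(y_k)(x_l^r-x_k^r)(x_m^j-x_l^j),
\]
which follows by expanding the definitions and using the consistency relation $A^{rj}_{km}=A^{rj}_{kl}+A^{rj}_{lm}+(x_l^r-x_k^r)(x_m^j-x_l^j)$. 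The point is that the right-hand side is again $O(\omega_{km}^{\gamma/p})$: the first term, using $|f_j^i(y_l)-f_j^i(y_k)|\leq B\,|y_l-y_k|$ and the bound $|y_l-y_k|\leq 2B_1\omega_{kl}^{1/p}$ (once established), is $O(\omega_{kl}^{1/p}\omega_{lm}^{1/p})$, and since here $p<\gamma$ we have $\tfrac1p+\tfrac1p=\tfrac2p\geq\tfrac\gamma p$; the second term is the genuinely new one and is where $f\in C^{\gamma-1}$ with $\gamma>2$ is used — the function $g=f_r^h\partial_hf_j^i$ is in $C^{\gamma-2}$, so $|g(y_l)-g(y_k)|\leq B\,|y_l-y_k|^{\gamma-2}\leq B'\omega_{kl}^{(\gamma-2)/p}$, and multiplying by $|A^{rj}_{lm}|\leq\omega_{lm}^{2/p}$ gives $O(\omega_{km}^{\gamma/p})$; the third term is $O(\omega_{kl}^{1/p}\omega_{lm}^{1/p})=O(\omega_{km}^{2/p})=O(\omega_{km}^{\gamma/p})$ likewise.

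With that identity in hand I would run exactly the same induction on $m-k$ as in Lemma \ref{l1}. First note $J_{k,k}=0$ and $J_{k,k+1}=0$ by (\ref{eq11}). For the inductive step, pick the largest $l$ with $k\leq l<m$ and $\omega_{kl}\leq\tfrac12\omega_{km}$, so $\omega_{l+1,m}<\tfrac12\omega_{km}$; apply the inductive hypothesis to get $|J_{kl}^i|\leq L\omega_{kl}^{\gamma/p}$, deduce $|y_l-y_k|\leq L\omega_{kl}^{\gamma/p}+B_1\omega_{kl}^{1/p}+B_1'\omega_{kl}^{2/p}\leq 2B_1\omega_{kl}^{1/p}$ provided $\delta$ is small enough (now the smallness condition on $\delta$ must also dominate the new $\omega_{kl}^{2/p}$ term, i.e. something like $L\delta^{(\gamma-1)/p}+B_1'\delta^{1/p}\leq B_1$, since $p\geq2$ means $2/p\geq1/p$ with the extra power $1/p>0$). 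Feeding this into the three-term identity above, together with the same trick of further splitting $J_{lm}^i=J_{l,l+1}^i+J_{l+1,m}^i+(\text{error})$ and using $J_{l,l+1}=0$, yields
\[
|J_{km}^i|\leq L\bigl(\omega_{kl}^{\gamma/p}+\omega_{l+1,m}^{\gamma/p}\bigr)+B_2\omega_{km}^{\gamma/p}\leq\bigl(2^{1-\gamma/p}L+B_2\bigr)\omega_{km}^{\gamma/p},
\]
and choosing $L$ with $(1-2^{1-\gamma/p})L\geq B_2$ closes the induction. The passage from small intervals ($\omega_{kl}\leq\delta$) to arbitrary intervals is identical to the $p<2$ case: decompose into at most $1+2\delta^{-1}\omega_{kl}$ subintervals each of which is either short or of the form $k_{u+1}=k_u+1$, sum the $|y|$-increments, and then bound $|J_{kl}^i|$ by $|y_l-y_k|+B_1\omega_{kl}^{1/p}+B_1'\omega_{kl}^{2/p}\leq\mathrm{const}\;\omega_{kl}^{\gamma/p}$ using $\omega_{kl}>\delta$. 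This gives part (a).

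For part (b), the recurrence for $z_k^{iN}=\partial y_k^i/\partial y_0^N$ displayed just before the lemma statement is precisely of the form (\ref{eq11}) with $f$ replaced by $F=(\{f_j^i\},\{\partial_qf_j^i\})$; when $f\in C_0^\gamma$ we have $F\in C_0^{\gamma-1}$ with $\|F\|_{\gamma-1}$ controlled by $\|f\|_\gamma$, so part (a) applies to the $z$-system and gives $|z_l^{iN}-z_k^{iN}|\leq C\omega_{kl}^{1/p}$ with $C$ depending only on the allowed data; in particular $|z_k^{iN}|\leq |z_0^{iN}|+C\omega_{0k}^{1/p}\leq n+C\omega_{0K}^{1/p}$, a bound depending only on $n,d,\gamma,p,\omega_{0K},\|f\|_\gamma$. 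Since $\tilde y_k^i-y_k^i=\int \cdots$ is controlled by $\sup|z_k^{iN}|\cdot\max_i|\tilde y_0^i-y_0^i|$ (mean value theorem applied along the segment from $y_0$ to $\tilde y_0$, using that the $z$-bound is uniform in the initial point), we get $|\tilde y_k^i-y_k^i|\leq M'\max_i|\tilde y_0^i-y_0^i|$ as claimed. The main obstacle, and the only place where the argument genuinely differs from the $p<2$ case, is verifying that the new term $(f_r^h\partial_hf_j^i(y_l)-f_r^h\partial_hf_j^i(y_k))A^{rj}_{lm}$ is $O(\omega_{km}^{\gamma/p})$ — this is exactly what forces the hypothesis $\gamma>2$ (so that $C^{\gamma-1}$ regularity of $f$ gives $C^{\gamma-2}$, hence a genuine $(\gamma-2)/p$ power of $\omega_{kl}$, on the coefficient $f_r^h\partial_hf_j^i$) and requires keeping careful track of which Hölder norms of $f$ are used, as well as slightly strengthening the constraint on $\delta$ to absorb the extra $\omega^{2/p}$ contributions.
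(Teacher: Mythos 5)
Your algebraic identity for $J_{km}^i-J_{kl}^i-J_{lm}^i$ is correct (a direct calculation using the consistency relation for $A$), and your treatment of part (b) and of the decomposition of long intervals into short ones matches the paper. But the inequality "$\tfrac2p\geq\tfrac\gamma p$" at the heart of your estimate of the first and third terms is backwards: in this section $2\leq p<\gamma\leq3$, so $\gamma>2$ and hence $\omega_{km}^{2/p}$ is a \emph{weaker} (larger) bound than $\omega_{km}^{\gamma/p}$, not a stronger one. (You have implicitly carried over $\gamma\leq2$ from the $p<2$ section.) Consequently, bounding $(f^i_j(y_l)-f^i_j(y_k))(x^j_m-x^j_l)$ by Lipschitz continuity of $f$ and $|y_l-y_k|\lesssim\omega_{kl}^{1/p}$ gives only $O(\omega_{km}^{2/p})$, which does not close the induction for $|J_{km}^i|\leq L\omega_{km}^{\gamma/p}$; the same is true of the term $g^i_{rj}(y_k)(x^r_l-x^r_k)(x^j_m-x^j_l)$.

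The missing idea is that these two "bad" $O(\omega^{2/p})$ contributions must be combined and cancelled, not estimated separately. Writing the second-order Taylor expansion
\[
f^i_j(y_l)-f^i_j(y_k)=R^i_{kl,j}+\partial_hf^i_j(y_k)(y^h_l-y^h_k)
\]
with $|R^i_{kl,j}|\leq B|y_l-y_k|^{\gamma-1}=O(\omega_{kl}^{(\gamma-1)/p})$, and then splitting $y^h_l-y^h_k=I^h_{kl}+f^h_r(y_k)(x^r_l-x^r_k)$, one finds that $\partial_hf^i_j(y_k)f^h_r(y_k)(x^r_l-x^r_k)(x^j_m-x^j_l)=g^i_{rj}(y_k)(x^r_l-x^r_k)(x^j_m-x^j_l)$ cancels the third term of your identity exactly, leaving
\[
J^i_{km}-J^i_{kl}-J^i_{lm}=\bigl(R^i_{kl,j}+\partial_hf^i_j(y_k)I^h_{kl}\bigr)(x^j_m-x^j_l)+\bigl(g^i_{rj}(y_l)-g^i_{rj}(y_k)\bigr)A^{rj}_{lm}.
\]
Only now are all terms genuinely $O(\omega_{km}^{\gamma/p})$: the $R$ term because $(\gamma-1)/p+1/p=\gamma/p$; the $I$ term because $|I_{kl}|\leq L\omega_{kl}^{\gamma/p}+B\omega_{kl}^{2/p}=O(\omega_{kl}^{2/p})$ (this improved bound on $I$ is what $J$ buys you), so $2/p+1/p=3/p\geq\gamma/p$; and the $g$ term as you already observed. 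Without this cancellation the argument does not work, so the proposal as written has a genuine gap.
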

\begin{proof} We will use $B_1$, $B_2$ etc to denote constants depending
only on $n,d,\gamma,p$ and the $C^{\gamma-1}$-norm of $f$. 
Let $I_{kl}^i$ be as before and let
$g_{rj}^i(y)=f_r^h(y)\partial_hf_j^i(y)$ so that
\[J_{kl}^i=I_{kl}^i-g_{rj}^i(y_k)A_{kl}^{rj}\]
Also define
\[R_{kl,j}^i=f_j^i(y_l)-f_j^i(y_k)-\partial_hf_j^i(y_k)(y_l^h-y_k^h)\]
and note that $|R_{kl,j}^i|\leq B_1|y_l-y_k|^{\gamma-1}$.  Then if $k\leq
l\leq m$ we have
\Z\label{eq12}J_{km}^i-J_{kl}^i-J_{lm}^i=(R_{kl,j}^i+\partial_hf_j^i(y_k)
I_{kl}^h)(x_m^j-x_l^j)+\{g_{rj}^i(y_l)-g_{rj}^i(y_k)\}A_{lm}^{rj}\z
{\bf Claim.} If $\delta>0$ is small enough and $L$ large enough
(depending only on $n,d,\gamma,p$ and the norm of $f$) then $|J_{km}^i|\leq
L\omega_{km}^{\gamma/p}$ whenever $\omega_{km}\leq\delta$.
\vspace{.2cm}\\
{\bf Proof of claim.} Again we use induction on $m-k$. Note first that
$J_{k,k}=0$ trivially and $J_{k,k+1}=0$ by (\ref{eq11}). Now suppose $k,m$ chosen
with $m-k>1$ and suppose the claim holds for smaller values of $m-k$.

Let $l$ be the largest integer with $k\leq
l<m$ satisfying $\omega_{kl}\leq\frac 12\omega_{km}$. Then
$\omega_{k,l+1}>\frac 12\omega_{km}$ so $\omega_{l+1,m}<\frac
12\omega_{km}$. Then by the inductive hypothesis the claim holds for
$k,l$, i.e. $J_{kl}\leq L\omega_{kl}^{\gamma/p}$. Then $|I_{kl}^i|\leq
L\omega_{kl}^{\gamma/p}+B_2\omega_{kl}^{2/p}$ and $|y_l-y_k|\leq
L\omega_{kl}^{\gamma/p}+B_2\omega_{kl}^{2/p}+B_3\omega_{kl}^{1/p}$. Then
provided \Z\label{eq13}L\delta^{(\gamma-2)/p}\leq B_2\z we have
\Z\label{eq14}|I_{kl}^i|\leq 2B_2\omega_{kl}^{2/p}\ \ \ \ {\rm and}\ \ \ \ 
|y_l-y_k|\leq3B_2\omega_{kl}^{1/p}\z Putting these estimates into (\ref{eq12})
we find that
\[|J_{km}^i|\leq|J_{kl}^i|+|J_{lm}^i|+B_4\omega_{km}^{\gamma/p}\]
In the same way we find that
$|J_{lm}^i|\leq|J_{l,l+1}|+|J_{l+1,m}^i|+B_4\omega_{km}^{\gamma/p}$. 
Since $J_{l,l+1}=0$ we get
\[|J_{km}^i|\leq|J_{kl}^i|+|J_{l+1,m}^i|+2B_4\omega_{km}^{\gamma/p}\leq
L(\omega_{kl}^{\gamma/p}+\omega_{l+1,m}^{\gamma/p})+2B_4\omega_{km}
^{\gamma/p}\leq(2^{1-\gamma/p}L+2B_4)\omega_{km}^{\gamma/p}\]
and provided \Z\label{eq15}(1-2^{1-\gamma/p})L\geq 2B_4\z we conclude that
$|J_{km}^i|\leq L\omega_{km}^{\gamma/p}$ which completes the induction,
provided we choose $L$ and $\delta$ to satisfy (\ref{eq15}) and (\ref{eq13}),
and proves the claim.
 
As before, part (a) of the lemma follows from the claim.\vspace{.2cm}\\
Part (b) then follows from (a) in exactly the same way as for Lemma
\ref{l1}.
\end{proof}

Theorems \ref{th3} and \ref{th4} are deduced in exactly the same way as
Theorems \ref{th1} and \ref{th2} follow from Lemma \ref{l1}.\vspace{.2cm}\\
{\bf Remark 3.} The same reasoning as in Remark 1 shows that, if $f\in
C^{\gamma-1}$, then any solution of (1) on $[0,T]$ satisfies
\[y^i(t)-y^i(s)-f^i_j(y(s))(x^j(t)-x^j(s))-f^h_r(y(s))
\partial_hf^i_j(y(s))A^{rj}(s,t)=O(\omega(t)-\omega(s))^{\gamma/p}\]
and also that (\ref{eq9}) holds when $f\in C^\gamma$ in the present
situation.\vspace{.2cm}\\
{\bf Remark 4.} In the same way is in Remark 2, one can prove Theorem
\ref{th4} by showing directly that the discrete approximations converge,
without passing to subsequences.\vspace{.2cm}\\
{\bf Uniqueness when $\gamma=p$.}

Now we prove the slightly more delicate result that (\ref{eq1}) has a unique
solution when $f\in C^p$, where $2\leq p<3$ (the proof for $p<2$ is
similar but simpler).
We require the following lemma, whose proof is straightforward.

\begin{lemma}\label{l3} Suppose $a,b,c,d\in\R^n$ with $|a-b|<\lambda$,
$|c-d|<\lambda$, $|a-c|<\epsilon$ and $|a-b-c+d|<\sigma$.
Then\vspace{.2cm}\\
(a) if $F\in C^\gamma_0$, where $1\leq\gamma<2$, then 
\[|F(b)-F(a)-F(d)+F(c)|<C(\lambda^{\gamma-1}\epsilon+\sigma)\]
where $C$ depends only on the $C^\gamma$ norm of $f$.\vspace{.2cm}\\
(b) if $F\in C^\gamma_0$, where $2\leq\gamma<3$, then
\[|F(b)-F(a)-DF(a)(b-a)-F(d)+F(c)+DF(c)(d-c)|<
C\lambda(\lambda^{\gamma-2}\epsilon+\sigma)\]
where $C$ depends only on the $C^\gamma$ norm of $f$.
\end{lemma}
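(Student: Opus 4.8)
The plan is to treat both parts as applications of the fundamental theorem of calculus together with the H\"{o}lder hypotheses on the top-order derivatives of $F$. The guiding idea is that $F(b)-F(a)-F(d)+F(c)$ is a \emph{second difference}: it compares the increment of $F$ along $a\to b$ with the increment along $c\to d$, and since $a-b$ and $c-d$ are close (their difference is $O(\sigma)$) while $a$ and $c$ are close (distance $O(\epsilon)$), this second difference should be small. In part (a) I would write, using the parametrised path $a+t(b-a)$ and $c+t(d-c)$ for $t\in[0,1]$, the identity
\[
F(b)-F(a)-F(d)+F(c)=\int_0^1\big[DF(a+t(b-a))(b-a)-DF(c+t(d-c))(d-c)\big]\,dt,
\]
valid since $F\in C^\gamma_0\subset C^1$. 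Split the integrand as
\[
\big[DF(a+t(b-a))-DF(c+t(d-c))\big](b-a)+DF(c+t(d-c))\big[(b-a)-(d-c)\big].
\]
The second term is bounded by $\|DF\|_\infty\,\sigma\le C\sigma$. For the first term, the argument difference is $a-c+t(b-a-d+c)$, which has norm at most $\epsilon+\sigma\le 2\epsilon$ (or simply $\lesssim\epsilon$ after absorbing $\sigma$), so since $DF$ is locally H\"{o}lder of exponent $\gamma-1$ (note $\gamma-1\in[0,1)$ here, and $F$ has compact support so the H\"{o}lder estimate is global), this term is bounded by $C\,(\epsilon)^{\gamma-1}\,|b-a|\le C\lambda^{\gamma-1}\epsilon$ — wait, more carefully, $|DF(\cdot)-DF(\cdot)|\le C|a-c+t(\cdots)|^{\gamma-1}\le C\epsilon^{\gamma-1}$, not $\lambda^{\gamma-1}$. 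Hmm, so actually one gets $C\epsilon^{\gamma-1}\lambda$; but the claimed bound is $\lambda^{\gamma-1}\epsilon$. Both are legitimate because one also has the trivial bound $C\min(\lambda,\epsilon)$-type control, and in fact $\epsilon^{\gamma-1}\lambda = \epsilon\cdot\epsilon^{\gamma-2}\lambda$; since we may always assume $\epsilon\le C\lambda$ (otherwise replace $\epsilon$ by $\lambda$ in the statement, as the hypothesis $|a-c|<\epsilon$ is only an upper bound and we can shrink $\epsilon$ to $\min(\epsilon,\lambda)$ without loss when $\gamma\ge 1$), we get $\epsilon^{\gamma-1}\lambda\le \lambda^{\gamma-1}\epsilon$. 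So the bound $C(\lambda^{\gamma-1}\epsilon+\sigma)$ follows.

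For part (b), where $F\in C^\gamma_0$ with $\gamma\in[2,3)$, the quantity $F(b)-F(a)-DF(a)(b-a)$ is the first-order Taylor remainder, and we are comparing two such remainders. I would use the second-order Taylor-with-integral-remainder form: for any $u,v$,
\[
F(v)-F(u)-DF(u)(v-u)=\int_0^1(1-t)\,D^2F(u+t(v-u))(v-u,v-u)\,dt,
\]
valid since $F\in C^2$. Apply this with $(u,v)=(a,b)$ and $(u,v)=(c,d)$ and subtract; the difference of integrands is
\[
(1-t)\Big[D^2F(a+t(b-a))(b-a,b-a)-D^2F(c+t(d-c))(d-c,d-c)\Big].
\]
Now decompose this into a "change of base point in $D^2F$" piece and a "change of the vectors $b-a$ versus $d-c$" piece. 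The base-point piece is controlled by H\"{o}lder continuity of $D^2F$ with exponent $\gamma-2\in[0,1)$: the argument difference has norm $\lesssim\epsilon$, and there are two factors of $b-a$ of size $\le\lambda$, giving $C\epsilon^{\gamma-2}\lambda^2$, which after the same $\epsilon\le C\lambda$ reduction is $\le C\lambda^{\gamma-2}\epsilon\cdot\lambda=C\lambda^{\gamma-1}\epsilon$ — hmm, that overshoots the claimed $C\lambda(\lambda^{\gamma-2}\epsilon+\sigma)=C(\lambda^{\gamma-1}\epsilon+\lambda\sigma)$, which is exactly what we want. The vector-change piece: $(b-a,b-a)-(d-c,d-c)=(b-a-d+c,\,b-a)+(d-c,\,b-a-d+c)$ as a bilinear expression, each term has one factor of size $\le\sigma$ and one of size $\le\lambda$, and $D^2F$ is bounded, giving $C\lambda\sigma$. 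Summing, $|F(b)-F(a)-DF(a)(b-a)-F(d)+F(c)+DF(c)(d-c)|\le C(\lambda^{\gamma-1}\epsilon+\lambda\sigma)=C\lambda(\lambda^{\gamma-2}\epsilon+\sigma)$, as required.

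The only genuine subtlety — and the one place I would be careful — is the bookkeeping that lets us replace $\epsilon$ by $\min(\epsilon,\lambda)$ so that the $\epsilon^{\gamma-1}\lambda$ (resp. $\epsilon^{\gamma-2}\lambda^2$) estimate coming naturally from the H\"{o}lder bound can be rewritten as $\lambda^{\gamma-1}\epsilon$ (resp. $\lambda^{\gamma-1}\epsilon$); this uses $\gamma\ge 1$ (resp. $\gamma\ge 2$) so that the exponent on $\epsilon$ is nonnegative and the swap goes in the right direction. Since the hypotheses only assert upper bounds $|a-c|<\epsilon$ etc., and the conclusion is monotone in $\epsilon,\lambda,\sigma$, we lose nothing by assuming $\epsilon\le\lambda$ at the outset. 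Apart from this, everything is a routine application of Taylor's theorem with integral remainder plus the (globally valid, thanks to compact support) H\"{o}lder estimates on the top derivatives, so I do not anticipate any real obstacle — this is why the paper calls the proof "straightforward."
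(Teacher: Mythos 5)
The paper omits the proof, calling it ``straightforward,'' so there is no text to compare against; but your argument contains a genuine algebraic error that invalidates it. You correctly compute that your parametrisation (FTC along the paths $t\mapsto a+t(b-a)$ and $t\mapsto c+t(d-c)$) produces the H\"{o}lder factor from the \emph{argument difference}, which has size $\approx\epsilon$, so you get $\epsilon^{\gamma-1}\lambda$ for part (a) (and $\epsilon^{\gamma-2}\lambda^2$ for part (b)) rather than the claimed bound. You then try to repair this by assuming $\epsilon\le\lambda$ and claiming $\epsilon^{\gamma-1}\lambda\le\lambda^{\gamma-1}\epsilon$. This inequality runs the wrong way: $\epsilon^{\gamma-1}\lambda\le\lambda^{\gamma-1}\epsilon$ is equivalent to $\epsilon^{\gamma-2}\le\lambda^{\gamma-2}$, and since $\gamma<2$ this holds if and only if $\epsilon\ge\lambda$. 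For $\epsilon\le\lambda$ you actually have $\epsilon^{\gamma-1}\lambda\ge\lambda^{\gamma-1}\epsilon$ (e.g.\ $\gamma=3/2$, $\epsilon=10^{-2}$, $\lambda=1$ gives $0.1$ versus $0.01$), so your estimate is strictly weaker than the lemma claims, and the gap cannot be closed by any constant (set $\sigma=0$ and let $\epsilon/\lambda\to0$). The same directional error appears in part (b).

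The fix is to exchange the roles of the ``small'' and ``large'' directions in the difference quotient, which is exactly the point of the $\lambda^{\gamma-1}\epsilon$ homogeneity: the ``large'' increment $b-a$ should feed the H\"{o}lder exponent $\gamma-1$, and the ``small'' separation $a-c$ should appear linearly. Concretely, for (a) introduce the auxiliary point $e=c+(b-a)$, so that $|e-d|=|a-b-c+d|<\sigma$, and split
\[F(b)-F(a)-F(d)+F(c)=\bigl[F(b)-F(a)-F(e)+F(c)\bigr]+\bigl[F(e)-F(d)\bigr].\]
The second bracket is $O(\|DF\|_\infty\,\sigma)$. The first bracket is a pure-translation second difference of the function $G(x)=F(x+(b-a))-F(x)$ along $c\to a$, and FTC gives
\[F(b)-F(a)-F(e)+F(c)=\int_0^1\bigl[DF\bigl(c+s(a-c)+(b-a)\bigr)-DF\bigl(c+s(a-c)\bigr)\bigr](a-c)\,ds,\]
to which you apply the H\"{o}lder bound $|DF(x+(b-a))-DF(x)|\le C|b-a|^{\gamma-1}\le C\lambda^{\gamma-1}$ together with $|a-c|<\epsilon$, yielding $C\lambda^{\gamma-1}\epsilon$ directly and with no need to relate $\epsilon$ and $\lambda$. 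Part (b) is handled identically by writing $\Phi(x,h)=F(x+h)-F(x)-DF(x)h$, decomposing $\Phi(a,b-a)-\Phi(c,d-c)$ as $[\Phi(a,b-a)-\Phi(c,b-a)]+[\Phi(c,b-a)-\Phi(c,d-c)]$, expressing the first piece as a double integral of $D^2F(\cdot+u(b-a))-D^2F(\cdot)$ against the pair $(b-a,a-c)$ (giving $C\lambda^{\gamma-2}\cdot\lambda\cdot\epsilon$), and bounding the second piece using the Lipschitz property of $DF$ together with $|(b-a)-(d-c)|<\sigma$ (giving $C\lambda\sigma$). Your Taylor-remainder framework is fine; it is the choice of path that must be reversed.
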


\begin{thm}\label{th5} Suppose $f\in C^p$ where $2\leq p<3$. Then the solution
of (\ref{eq1}), whose existence is asserted by Theorem \ref{th4}, is
unique.
\end{thm}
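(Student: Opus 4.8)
The plan is to imitate the proof of Theorem \ref{th3}: to show that, for \emph{any} solution $y$ of (\ref{eq1}) on $[0,t]$ with $t<\tau$, the discrete approximations $y_k$ defined by (\ref{eq11}), associated with partitions of $[0,t]$, converge to $y(t_k)$ as the mesh tends to $0$. Since the $y_k$ do not depend on $y$, this forces any two solutions to coincide. First I would reduce to the case $f\in C_0^p$, replacing $f$ outside a ball containing the ranges of both solutions on $[0,t]$. Since $C^p\subseteq C^2$, Remark 3 (applied with $\gamma=3$) shows that every solution automatically satisfies the sharpened estimate
\[
y^i(t)-y^i(s)-f_j^i(y(s))(x^j(t)-x^j(s))-g_{rj}^i(y(s))A^{rj}(s,t)=O\big((\omega(t)-\omega(s))^{3/p}\big),
\]
where $g_{rj}^i=f_r^h\partial_hf_j^i$; the exponent $3/p$ exceeds $1$ because $p<3$, and this takes over the role played by (\ref{eq8}) in the proof of Theorem \ref{th1}.

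The point at which the argument for Theorem \ref{th3} fails when $\gamma=p$ is the Lipschitz dependence on initial data, Lemma \ref{l2}(b): its proof applies Lemma \ref{l2}(a) to the augmented vector field $(f,\partial f\cdot z)$, which is only of class $C^{p-1}$, so the hypothesis $p<\gamma$ of Lemma \ref{l2} is not met. I would replace it by a direct estimate: if $u_l$ and $v_l$ are two solutions of the recurrence (\ref{eq11}), then $|u_l-v_l|\le M'|u_k-v_k|$ for $l\ge k$, with $M'$ depending only on $n,d,p,\omega_{0K}$ and $\|f\|_p$. Writing $\delta_l=u_l-v_l$, the quantity to iterate — by analogy with the iteration of $J_{kl}$ in Lemma \ref{l2} — is
\[
K_{kl}^i=\delta_l^i-\delta_k^i-[f_j^i(u_k)-f_j^i(v_k)](x_l^j-x_k^j)-[g_{rj}^i(u_k)-g_{rj}^i(v_k)]A_{kl}^{rj},
\]
which vanishes for $l=k$ and $l=k+1$. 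As in the computation of (\ref{eq12}), the identity $g_{rj}^i=f_r^h\partial_hf_j^i$ makes the cross term $(x_l^r-x_k^r)(x_m^j-x_l^j)$ drop out of the additivity defect $K_{km}-K_{kl}-K_{lm}$, which then reduces to: a feedback term pairing $x_m^j-x_l^j$ with the first-order errors $I_{kl}$ of $u$ and of $v$ (each of size $O(\omega_{kl}^{2/p})$, by Lemma \ref{l2}(a) applied to $u$ and to $v$, legitimately with $\gamma=3$); a second difference of $f$ along the two trajectories, estimated by Lemma \ref{l3}(b) with $\gamma=p$; and a second difference of $g$ along them, paired with $A_{lm}$ and estimated by Lemma \ref{l3}(a) with $\gamma=p-1$. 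Together with the elementary bound $|\delta_l-\delta_k|\le|K_{kl}|+B|\delta_k|\omega_{kl}^{1/p}$, this feeds into the greedy subdivision of the Claim inside Lemma \ref{l2} to bound $|K_{kl}|$ and hence $|\delta_l|$.

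Granting this substitute for Lemma \ref{l2}(b), the proof concludes exactly as for Theorem \ref{th3}: with $z_l^{(k)}$ the solution of (\ref{eq11}) over $[t_k,t]$ started from $z_k^{(k)}=y(t_k)$, the sharpened estimate gives $|z_{k+1}^{(k)}-y(t_{k+1})|=O(\omega_{k,k+1}^{3/p})$, the substitute estimate gives $|z_l^{(k)}-z_l^{(k+1)}|=O(\omega_{k,k+1}^{3/p})$, and summing over $k$ (using $3/p>1$) yields $y_k\to y(t_k)$. The main obstacle, as I see it, is the borderline bookkeeping in the iteration above: when $\gamma=p$ the remainder supplied by Lemma \ref{l3}(b), paired with $x_m^j-x_l^j$, is of the critical size $|\delta_k|\,\omega_{km}$ — precisely the order at which the greedy subdivision stops being a strict contraction — so closing the induction requires an additional device: carrying it out first on a sufficiently short interval, where $\omega_{0K}^{1/p}$ is small enough to make the relevant structural constants admissible, and then patching; and iterating, alongside $|\delta_l|$, a finer quantity in which the linearisation has also been subtracted, so that the order-$\omega^{1/p}$ part of $\delta_l-\delta_k$ does not accumulate. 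The remaining details are a routine transcription of the arguments for $\gamma>p$.
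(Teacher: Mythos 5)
You correctly diagnose where a naive imitation of the Theorem~\ref{th3} argument breaks down: Lemma~\ref{l2}(b) applies Lemma~\ref{l2}(a) to the augmented vector field $F=(f,Df)$, which at $\gamma=p$ lies only in $C^{p-1}$, so the hypothesis $\gamma>p$ is not met; and if one tries to close the greedy-subdivision induction directly on the difference quantity $K_{kl}$ (which is the paper's $\y{J}$), the additivity defect produces a term of size $|\delta_k|\,\omega_{km}$, precisely the critical exponent at which $\omega_{kl}^1+\omega_{l+1,m}^1\approx\omega_{km}^1$ yields no contraction. But the repair you suggest --- restricting to a short interval so that $\omega_{0K}$ is small, patching, and carrying an extra linearised remainder --- does not touch this obstruction. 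The exponent $1$ is scale-invariant, so interval-shrinking cannot change the fact that the subdivision step is lossless at that exponent, and subtracting a further linearisation does not help either, since the term Lemma~\ref{l3}(b) supplies at $\gamma=p$ genuinely sits at exponent $1$ in $\omega$. As written, your argument has a gap at exactly the place you flag, and the proposed device does not close it.

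The paper takes a genuinely different route. It argues by contradiction: if two solutions differ, let $t_k$ be the first time $|\y{y}|:=|\tilde{y}-y|$ reaches $2^{-k}$, so the $t_k$ decrease, and work on $L=[t_{k+1},t_k]$ where $|\y{y}|\le2^{-k}$. On $L$ it does iterate the same quantity $\y{J}$, but not by greedy subdivision aimed at an $\omega^{\gamma/p}$ bound; instead it sets $K_n=\sup\{|\y{J}(s,t)|:[s,t]\subseteq L,\ \omega(s,t)\le2^{-n}\}$ and, splitting each such interval in half in $\omega$-mass and using (\ref{eq16}) with Lemma~\ref{l3}, obtains the upward recurrence $K_n\le(2+C2^{-n/p})K_{n+1}+C2^{-n-k}$. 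This recurrence doubles per scale, but the a~priori bound from Remark~3 (with any $\gamma\in(p,3)$, legitimate since $f\in C^p\subseteq C^{\gamma-1}$) supplies the boundary condition $K_n\le C2^{-k-n}$ once $n>kp/(\gamma-p)$, so the doubling only runs over $O(k)$ scales before the trivial bound takes over, giving $K_n\le Ck\,2^{-k-n}$ for all $n$. Feeding this into the increment estimate for $\y{y}$ forces $\omega(t_k,t_{k+1})\gtrsim k^{-1}$, and $\sum k^{-1}=\infty$ makes this impossible. The essential device missing from your outline is this scale-indexed iteration calibrated against the divergent harmonic series, which converts the critical-exponent loss into a merely polynomial (in $k$) factor rather than trying to avoid loss altogether.
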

\begin{proof} Suppose $y(t)$ and $\tilde{y}(t)$ are two solutions; is
suffices to prove that $y=\tilde{y}$ on some interval $[0,\tau]$.
Suppose on the contrary that no such interval exists. Then for $k$ large
enough we can find $t_k>0$ such that $|y(t_k)-\tilde{y}(t_k)|=2^{-k}$
but $|y(t)-\tilde{y}(t)|<2^{-k}$ for $0<t<t_k$. Then
$t_k>t_{k+1}>\cdots$. We shall show that $\omega(t_k,t_{k+1})>$const$.
k^{-1}$. Since $\sum k^{-1}=\infty$, this will give a contradiction and
prove the theorem.

Since the problem is a local one, we can suppose $f\in C^p_0$. Fix $\gamma$
with $p<\gamma<3$. We use $C_1,C_2,\cdots$ for constants which depend
only on $p$, $\gamma$ and the $C^\gamma$-norm of $f$.

We fix $k$ and let $L$ denote the interval $[t_{k+1},t_k]$. We
introduce the notation
\[I^i(s,t)=y^i(t)-y^i(s)-f^i_j(x^j(t)-x^j(s))\]and
\[J^i(s,t)=I^i(s,t)-f^h_r(y(s))\partial_hf^i_j(y(s))A^{rj}(s,t)\]
We define $\tilde{I}$ and $\tilde{J}$ similarly.  Then by Remark 3 we have
$|J(s,t)|\leq C_1\omega(s,t)^{\gamma/p}$ and then
\Z\label{bds}|I(s,t)|\leq C_1\omega(s,t)^{2/p}\ \ {\rm and}\ \ |y(t)-y(s)|
\leq C_1\omega(s,t)^{1/p}\z
We also introduce the notation
$\y{y}(t)=\tilde{y}(t)-y(t)$, $\y{I}^i(s,t)=\tilde{I}^i(s,t)-I(s,t)$,
etc. Then we have $|\y{y}(s,t)|\leq2^{-k}$ if $[s,t]\subseteq L$ and
$|\y{J}(s,t)|\leq2C_1\omega(s,t)^{\gamma/p}$.

Now we write
\[R^i_j(s,t)=f^i_j(y(t))-f^i_j(y(s))-\partial_hf^i_j(y(s))(y^h(t)-y^h(s))\]
 Then
\[\begin{split}J^i(s,u)-J^i(s,t)-J^i(t,u)=&\left\{R^i_j(s,t)+\partial_h
f^i_j(y(s))I^h(s,t)\right\}(x^j(u)-x^j(t))\\&+\left\{g^i_{rj}(y(t))-g^i_{rj}
(y(s))\right\}A^{rj}(t,u)\end{split}\]
with a similar expression involving $\tilde{J}$.
The difference of the two expressions gives
\Z\label{eq16}\y{J}^i(s,u)-\y{J}^i(s,t)-\y{J}^i(t,u)=W^i_j(s,t)(x^j(u)-x^j(t))
+V^i_{rj}(s,t)A^{rj}(t,u)\z
where $V^i_{rj}(s,t)=g^i_{rj}(\tilde{y}(t))-g^i_{rj}(y(t))-g^i_{rj}
(\tilde{y}(s))+g^i_{rj}(y(s))$ and
\[W^i_j(s,t)=\y{R}^i_j(s,t)+\left\{\partial_hf^i_j(\tilde{y}(s))-\partial_h
f^i_j(y(s))\right\}I^h(s,t)+\partial_hf^i_j(\tilde{y}(s))\y{I}^h(s,t)\]

For each $n=1,2,\cdots$ let $K_n$ be the supremum of $|\y{J}(s,t)|$
taken over all intervals $[s,t]\subseteq L$ with $\omega(s,t)\leq2^{-n}$. Then
for such intervals $[s,t]$ we have the estimates $|\y{I}(s,t)|\leq
K_n+C_22^{-k-2n/p}$ and
\Z\label{eq17}|\y{y}(t)-\y{y}(s)|\leq K_n+C_22^{-k-n/p}\z
Using these estimates and Lemma \ref{l3} we obtain
$|V^i_{rj}(s,t)|\leq C_2(K_n+2^{-k-(p-2)n/p})$ and $|\y{R}^i_j(s,t)|\leq
C_22^{-n/p}(2^{-k-(p-2)n/p}+K_n)$. Now if $[s,u]$ is any interval in $L$
with $\omega(s,u)\leq2^{-n}$, we can find $t\in(s,u)$ with
$\omega(s,t)\leq2^{-n-1}$ and $\omega(t,u)\leq2^{-n-1}$. Then putting the
above estimates in (\ref{eq1}6) we obtain 
$|\y{J}^i(s,u)-\y{J}^i(s,t)-\y{J}^i(t,u)|\leq
C_3(2^{-n/p}K_{n+1}+2^{-n-k})$ and so
\Z\label{eq18}K_n\leq(2+C_32^{-n/p})K_{n+1}+C_32^{-n-k}\z
From the fact that $|\y{J}(s,t)|\leq2C_1\omega(s,t)^{\gamma/p}$ we
deduce that $K_n\leq2C_12^{-k-n}$ if $n>\frac{kp}{\gamma-p}$, and
combining this with the recurrence relation (\ref{eq18}) we see that $K_n\leq
C_4k2^{-k-n}$ for all $n$. Now if $n$ is such that $\omega(t_k,t_{k+1})
\leq2^{-n}$ then $2^{-k+1}\leq|\y{y}(t_k)-\y{y}(t_{k+1})|\leq
C_5(k2^{-k-n}+2^{-k-n/p})$ by (\ref{eq17}), so
$\omega(t_k,t_{k+1})\geq C_6k^{-1}$ as required.
\end{proof}
\noindent
{\bf Remark 5.} The hypothesis of finite $p$-variation on $x(t)$ can be
weakened slightly in Theorem \ref{th5} - it suffices to assume that
$|x^j(t)-x^j(s)|\leq\left(\omega(s,t)\log\log\frac1{\omega(s,t)}
\right)^{1/p}$ and $|A^{rj}(s,t)|\leq\left(\omega(s,t)\log\log\frac1
{\omega(s,t)}\right)^{2/p}$ for all sufficiently small intervals
$[s,t]$.

The proof of Theorem \ref{th5} can be repeated under these weaker assumptions.
To get the same bound for $J(s,t)$ using Remark 3, we can exploit the freedom
in the choice of $\gamma$ to use a slightly bigger $p$. Then in (\ref{bds})
there is an extra factor of $(\log\log\frac1{\omega(s,t})^{2/p}$ in the bound
for $I(s,t)$ and an extra factor of $(\log\log\frac1{\omega(s,t}^{1/p}$ in
the bound for $y(t)-y(s)$. This results in $2^{-n}$ being replaced by $2^{-n}
\log n$ in bounds such as (\ref{eq17}), the RHS of which becomes $K_n+C_22^
{-k-n/p}(\log n)^{1/p}$. Then in place of (\ref{eq18}) one has
\[K_n\leq(2+C2^{-n/p}\log n)K_{n+1}+C2^{-n-k}\log n\]
and deduces that $K_n\leq C'k\log k2^{-n}$. This gives
$\omega(t_k,t_{k+1})\geq\ $const$.(k\log k)^{-1}$, which, since $\sum(k\log
k)^{-1}=\infty$, is sufficient to complete the proof.

It is true that this is a very small improvement on Theorem \ref{th5}, but
in the case $p=2$, it is sufficient to make the theorem applicable
to Brownian motion - see Theorem \ref{th7}.
\vspace{.2cm}\\
{\bf Relation to results of \cite{tjl}}

Here we discuss briefly the relation of our notion of solution of (\ref{eq1}) to that given
by \cite{tjl} in the case $2\leq p<\gamma<3$. The basic object in \cite{tjl} is a `multiplicative
functional', which in effect consists of a path $x^i(t)$ together with iterated integrals
$A^{ij}(s,t)$ as considered in this paper. The viewpoint of \cite{tjl} is that the
solution should also be a multiplicative functional, so that the solution consists not
only of the path $y(t)$ as considered here but also of associated iterated integrals. Indeed
the solution as defined in \cite{tjl} includes the iterated integrals associated with the
path $(x(t),y(t))$ which combines the driving path with the solution path. This means, in
addition to $A^{ij}(s,t)$ which is given, also $B^{il}(s,t)$, $C^{kj}(s,t)$ and $D^{kl}(s,t)$ being
respectively interpretations of $\int_s^t(x^i-x^i(s))dy^l$, $\int_s^t(y^k-y^k(s))dx^j$ and $\int_s^t(y^k-y^k(s))dy^l$.

The inclusion of these additional components in the solution is the main difference between the notion of
solution in \cite{tjl} and ours, apart from the fact that \cite{tjl} defines solution in terms of an
integral equation formulation rather than a difference inequality like (\ref{eq10}). The relation between
the two notions is as follows. Assume $f\in C^{\gamma-1}$. Then for any solution of (\ref{eq1}) in the sense of Definition 4.1.1
of \cite{tjl}, it follows from the estimates in section 3.2 of \cite{tjl} that the path component $y$ will
satisfy (\ref{eq10}) and is therefore a solution in our sense. In the other direction, a solution of (\ref{eq2}) in
the sense of Definition \ref{def2} does not directly yield a solution in the sense of \cite{tjl}, because of the
missing components. But it can be shown that such a solution is obtained from any solution, in the sense of Definition \ref{def2},
of the extended system
\[dx^i=dx^i,\ dy^i=f^i_j(y)dx^j,\ dB^{il}=x^if^l_j(y)dx^j,\ dC^{kj}=y^kdx^j,\ dD^{kl}=y^kf^l_j(y)dx^j\]
(with the obvious initial conditions for $x$ and $y$, and arbitrary ones for $B,C,D$) by setting
\[B^{il}(s,t)=B^{il}(t)-B^{il}(s)-x^i(s)f^l_j(y(s))\{x^j(t)-x^j(s)\}\]
and similarly for the $C$ and $D$ terms.

\section{Equations driven by Brownian motion}\label{bm}

Stochastic differential equations driven by Brownian motion form one of
the main motivating examples for Lyons' theory. See \cite{iw} for background
on this topic. In this case the driving path is a $d$-dimensional Brownian
motion $W(t)=(W^1(t),\cdots,W^d(t))$ where $W^i(t)$ are independent standard
Brownian motions defined on $[0,\infty)$. Then with probability 1, $W$ has
finite $p$-variation for all $p>2$ on any finite interval, indeed it satisfies
a H\"{o}lder condition with exponent $1/p$, which means that we can take
$\omega(t)=t$ in the definition of $p$-variation. However $W$ does not have
finite 2-variation, so the theory of section \ref{s3} is needed.

There are two choices for $A^{rj}(s,t)$ corresponding to It\^{o} and
Stratonovich SDEs. For the It\^{o} case we use $A^{rj}_I(s,t)=\int_s^t
W^r(s,u)dW^j(u)$, where we use the notation $W^j(s,t)=W^j(t)-W^j(s)$,
and the integral is a standard It\^{o} integral. We define $A^{rj}_S(s,t)$
in the same way, but using a Stratonovich integral. The two versions differ
only on the diagonal, i.e. $A^{rj}_S=A^{rj}_I$ if $r\neq j$, and we have
$A^{jj}_S=A^{jj}_I+(t-s)/2=\frac12W^j(s,t)^2$. For either choice, with
probability 1 the $p/2$-variation condition with $\omega(t)=t$ is satisfied
on any finite interval for all $p>2$. We always assume that our Brownian
paths satisfy this condition, along with the $p$-variation condition for $W$
itself.

Then the theory described in section \ref{s3} here gives existence and
uniqueness of solutions for $f\in C^\gamma$ for $\gamma>2$ (As we shall see
in Theorem \ref{th7}, this can be improved to $f\in C^2$). On the
other hand, the well-established theory of SDE's gives existence and
uniqueness for any locally Lipschitz $f$, at least for It\^{o} equations. In
this section we attempt to account for this difference in smoothness
assumptions. We consider equation (\ref{eq1}) where $x(t)=W(t)$ is as
above.

In the first place, standard SDE theory regards a solution as a
stochastic process, and the uniqueness theorem gives uniqueness of a
process rather than uniqueness of a solution for an individual driving
path. However, we show in Proposition \ref{ie} that if $f\in C^\gamma$ for
$\gamma>1$, then with probability 1 the It\^{o} version of equation
(\ref{eq1}) has a unique solution, in the sense of Definition \ref{def2}). The real
reason for the difference in smoothness requirements is that the quantifiers
`for all $f\in C^\gamma$' and `with probability 1' do not commute. We shall
show that, if $\gamma<2$, the statement `with probability 1, for all $f\in
C^\gamma$, (\ref{eq1}) has a unique solution' is false (Theorem \ref{th6}
below). With $\gamma=2$, it is true (Theorem \ref{th7}).

We start by proving uniqueness of solutions for the It\^{o} version, for
given $f\in C^\gamma$ where $\gamma>1$.

\begin{lemma}\label{kb}
Consider the It\^{o} equation \Z\label{ieq} dy^i=f^i_j(y)dW^j\z with $y(0)=y_0$ on the interval
$[0,T]$ where $f\in C^\gamma_0$. Let $k\geq2$. Let $z^i(s,t)=y^i(t)-y^i(s)-f^i_j(y(s))(W^j(t)-W^j(s))-g^i_{rj}A^{rj}_I(s,t)$
where $g^i_{rj}(y)=f^h_r(y)\partial_hf^i_j(y)$. Then there is a constant $C$ such that
\[\IE|z(s,t)|^k\leq C(t-s)^{k(1+\gamma)/2}\]
\end{lemma}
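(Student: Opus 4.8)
The plan is to estimate $\IE|z(s,t)|^k$ by first deriving a recursive inequality over a dyadic decomposition of $[s,t]$, then running a Gronwall-type iteration. Write $L=[s,t]$ and, for a subinterval $[u,v]\subseteq L$, set $z(u,v)$ as in the statement. The starting point is the cocycle identity for $z$: if $u\leq w\leq v$ then
\[
z^i(u,v)-z^i(u,w)-z^i(w,v)=\bigl(R^i_j(u,w)+\partial_hf^i_j(y(u))z^h(u,w)+\partial_hf^i_j(y(u))g^h_{rj'}A^{rj'}_I(u,w)\bigr)(W^j(v)-W^j(w))+\{g^i_{rj}(y(w))-g^i_{rj}(y(u))\}A^{rj}_I(w,v),
\]
where $R^i_j(u,w)=f^i_j(y(w))-f^i_j(y(u))-\partial_hf^i_j(y(u))(y^h(w)-y^h(u))$, exactly as in the proof of Theorem \ref{th5}. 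The key feature, which distinguishes this from the deterministic argument, is that $W^j(v)-W^j(w)$ and $A^{rj}_I(w,v)$ are \emph{martingale increments} over $[w,v]$ that are independent of the $\sigma$-algebra generated by $W$ up to time $w$, and $A^{rj}_I$ is itself a stochastic integral with mean zero. So when we take conditional expectations, the leading terms — those linear in $W^j(v)-W^j(w)$ or in $A^{rj}_I(w,v)$ with coefficients measurable up to time $w$ — contribute nothing to the mean and only $O((v-w)^{1/2})$, respectively $O(v-w)$, in $L^k$, against which we pay a Hölder factor.

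First I would establish the a priori bounds $\IE|y(t)-y(s)|^k\leq C(t-s)^{k/2}$ and $\IE|z(s,t)|^k\leq C(t-s)^{k/2}$ (the latter crude bound, used only to get the iteration started), which follow from standard Burkholder–Davis–Gundy estimates applied to the Itô SDE (\ref{ieq}) together with boundedness of $f\in C^\gamma_0$ and its derivatives; note $g_{rj}$ is bounded and $A^{rj}_I(s,t)$ has all moments bounded by $C(t-s)^k$ via BDG. Next, using $f\in C^\gamma_0$, one gets $|R^i_j(u,w)|\leq C|y(w)-y(u)|^{\gamma}$ if we expand to the appropriate order — actually, since only $C^\gamma$ with $1<\gamma$ is assumed, $|R^i_j(u,w)|\leq C|y(w)-y(u)|^{\min(\gamma,2)}$, and similarly $|g_{rj}(y(w))-g_{rj}(y(u))|\leq C|y(w)-y(u)|^{\gamma-1}$. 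Set $\phi(h)=\sup\{\IE|z(u,v)|^k : [u,v]\subseteq L,\ v-u\leq h\}$. Splitting $[u,v]$ at its midpoint $w$, taking $L^k$ norms in the cocycle identity via Minkowski, taking $\IE$, and using the martingale/independence structure to kill the dominant linear terms (so they cost a full power $(v-w)^{k/2}$ or $(v-w)^k$ rather than $(v-w)^{k\cdot 1/2\cdot(\text{Hölder})}$), I expect to arrive at an inequality of the schematic form
\[
\phi(h)\leq 2^{-(k\wedge\text{something})}\,C\,\phi(h/2)+C\,h^{k(1+\gamma)/2}.
\]
The gain factor in front of $\phi(h/2)$ must be strictly less than $1$ after accounting for the two halves — this is where the martingale cancellation is essential, because naively one would get a factor $2$ (two subintervals) as in the deterministic cocycle and the iteration would not close at the exponent $k(1+\gamma)/2$.

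Then the iteration: applying the recursion down from $h$ to $h2^{-N}$ and summing the geometric-type series, using the crude bound $\phi(h2^{-N})\leq C(h2^{-N})^{k/2}$ to terminate, yields $\phi(h)\leq C h^{k(1+\gamma)/2}$ provided the contraction factor dominates — i.e. provided $k(1+\gamma)/2$ is below the decay rate coming from the martingale terms, which for the Itô integral term $A^{rj}_I(w,v)$ gives decay $\sim (v-w)^{k}$ and for $W^j(v)-W^j(w)$ gives $\sim(v-w)^{k/2}$ combined with a Hölder factor $(v-w)^{k(\gamma-1)/2p}$-type gain; one checks $(1+\gamma)/2<1+ (\text{stuff})$, true since $\gamma<3$ in the relevant range and we have room from the exponent $2$ (rather than $1/2$) of the $A$-term. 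Taking $h=t-s$ gives the claimed bound. The main obstacle I anticipate is bookkeeping the cancellation carefully: one must condition on $\mathcal{F}_w$, recognise which coefficient blocks are $\mathcal{F}_w$-measurable (those built from $y(u),y(w)$ and from $A^{rj}_I(u,w)$), peel those off using $\IE[\,\cdot\mid\mathcal F_w]$ of the martingale increments being zero, and then apply BDG and Hölder's inequality to the genuinely cross terms — and to do this while keeping the exponent sharp at $k(1+\gamma)/2$ rather than losing to, say, $k\gamma/p$. A secondary technicality is that $R^i_j$ must be handled with the correct modulus of continuity when $1<\gamma<2$ versus $2\leq\gamma<3$, mirroring the two cases of Lemma \ref{l3}.
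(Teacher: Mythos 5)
Your proposal takes a genuinely different route from the paper's, and a substantially more elaborate one. The paper's proof is a direct, one-pass argument. It starts from the observation that $z^i(s,t)$ is \emph{itself} a single stochastic integral,
\[
z^i(s,t)=\int_s^t\bigl\{f^i_j(y(\tau))-f^i_j(y(s))-g^i_{rj}(y(s))W^r(s,\tau)\bigr\}\,dW^j(\tau),
\]
which follows immediately from writing $y^i(t)-y^i(s)=\int_s^t f^i_j(y(\tau))\,dW^j(\tau)$ and $A^{rj}_I(s,t)=\int_s^t W^r(s,\tau)\,dW^j(\tau)$. The integrand is the first-order Taylor residual of $\tau\mapsto f^i_j(y(\tau))$ about $\tau=s$, corrected by replacing $y(\tau)-y(s)$ with its leading martingale part $W^r(s,\tau)f^h_r(y(s))$. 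Two elementary BDG estimates (one for $\IE|y(t)-y(s)|^k\lesssim(t-s)^{k/2}$, one for $\IE|y^i(t)-y^i(s)-f^i_j(y(s))W^j(s,t)|^k\lesssim(t-s)^k$), combined with the H\"older bound $|f(b)-f(a)-Df(a)(b-a)|\lesssim|b-a|^\gamma$, give $\IE|f^i_j(y(\tau))-f^i_j(y(s))-g^i_{rj}(y(s))W^r(s,\tau)|^k\lesssim(\tau-s)^{\gamma k/2}$; a final application of BDG to the displayed integral then yields $\IE|z|^k\lesssim(t-s)^{k/2}\cdot(t-s)^{\gamma k/2}$, exactly the claim. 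No decomposition of $[s,t]$, no recursion, no contraction.

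Your iterative scheme is not obviously wrong, but you have not closed its central step, and you flag this yourself. Passing from the cocycle identity to $\phi(h)\leq c\,\phi(h/2)+Ch^{k(1+\gamma)/2}$ requires the combined contribution of the two halves $z(u,w)$, $z(w,v)$ to come with a factor strictly better than the Minkowski factor $2$ (measured in the right units), and for general $k$ this rests on a discrete BDG/square-function estimate for the martingale-difference array $\{z(t_j,t_{j+1})\}$ rather than on the simple conditioning heuristic you sketch; this is real work, and at the end it reproduces, at one remove, exactly the BDG estimate that the paper applies once and directly. The ingredient you are missing is the integral representation of $z(s,t)$ above: once you see that $z$ is literally the stochastic integral of the $\gamma$-Taylor remainder, the whole dyadic machinery, the a priori crude bound, and the contraction bookkeeping become unnecessary. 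I would also point out a small inaccuracy in your sketch of the cocycle error: for the present lemma the relevant modulus is $|R^i_j(u,w)|\lesssim|y(w)-y(u)|^{\gamma}$ with a single exponent $\gamma\in(1,2)$ (the lemma is invoked under $\gamma>1$, in the regime of Proposition \ref{ie}), so the two-case split you mention, mirroring Lemma \ref{l3}, is not actually needed here.
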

\begin{proof}We use repeatedly the fact that if $X(t)$ is a stochastic
process adapted to the filtration of the Brownian motion, such that $\IE|X(t)|
^k\leq M$ for all $\tau$ in an interval $(s,t)$, then
\Z\label{ebd}\IE\left|\int_s^tX(t)dW^j(\tau)d\tau\right|^k\leq A(t-s)^{k/2}
M^k\z where $A$ is a constant depending on $k$.

First we have $y^i(t)-y^i(s)=\int_s^tf^i_j(y(\tau))dW^j(\tau)$ so $\IE|y(t)-y(s)|^k\leq C_1(t-s)^{k/2}$. Then
\[y^i(t)-y^i(s)-f^i_j(y(s))W^j(s,t)=\int_s^t\{f^i_j(y(\tau)-f^i_j(y(s))\}dW^j(\tau)\]
and so by (\ref{ebd})
\Z\label{p1}\begin{split}\IE|y^i(t)-y^i(s)-f^i_j(y(s))W^j(s,t)|^k&\leq C_2(t-s)^{k/2}\max_{s\leq\tau\leq t}
\IE|f(y(\tau))-f(y(s))|^k\\&\leq C_3(t-s)^k\end{split}\z
 Also we have
\[\IE|f^i(y(t))-f^i(y(s))-\partial f^i_j(y(s))(y^j(t)-y^j(s))|^k\leq C_4\IE|y(t)-y(s)|^{\gamma k}\leq C_5(t-s)^{\gamma k/2}\]
Combining this with (\ref{p1}) gives
\Z\label{p2}\IE|f^i_j(y(t))-f^i_j(y(s))-g^i_{rj}(y(s))W^r(s,t)|^k\leq C_5(t-s)^{\gamma k/2}\z
Finally
\[ z^i(s,t)=\int_s^t\{f^i_j(y(\tau))-f^i_j(y(s))-g^i_{rj}(y(s))W^r(s,\tau)\}dW^j(\tau)\]
and applying (\ref{ebd}) and (\ref{p2}) gives the required bound.
\end{proof}

Now we use the fact that, with probability 1, equation (\ref{ieq}) has a continuous {\em solution flow} $(s,t,x)\rightarrow F(s,t,x)
\in\R^d$, defined for $s<t$ and $x\in\R^d$, such that any choice of $s,t,x$ the solution of (\ref{ieq}) with $y(s)=x$ satisfies
$y(t)=F(s,t,x)$ with probability 1 (see \cite{ku}). Moreover, for any $\beta<\frac12$, $F(s,t,x)$ is a $C^\beta$ function of $s$ and $t$ and a locally
Lipschitz function of $x$, with uniform $C^\beta$ and Lipschitz bounds on compact sets. We define
\[Z^i(s,t,x)=F^i(s,t,x)-x^i-f^i_j(x)(W^j(t)-W^j(s))-g^i_{rj}A^{rj}(s,t)\]
and deduce that, if $0<\beta<\gamma-1$, then $Z$ is with probability 1 a $C^\beta$ function of $s,t,x$.

Now we can prove the following bound.

\begin{lemma}\label{zb}
Fix $T>0$, $L>0$ and $1<q<\A=(1+\gamma)/2$. Then with probability 1 there is a constant $C$ such that $|Z(s,t,y)|\leq C(t-s)^q$ for
$0\leq s<t\leq T$ and $|y|<L$.
\end{lemma}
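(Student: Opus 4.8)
The plan is to upgrade the pointwise moment bound of Lemma \ref{kb} to an almost-sure Hölder-type bound uniform in the spatial variable, using a Kolmogorov-type chaining argument on a grid of dyadic times and a lattice of spatial points, together with the spatial regularity of $Z$ recorded just before the statement. First I would fix $k$ large enough that $k\alpha > k/2 + 1$ and also $kq < k\alpha$ and $k\beta' > d+1$ for some $\beta'$ with $q<\beta'<\alpha$ — all possible since $q<\alpha$ — and apply Lemma \ref{kb} (taking $f\in C^\gamma_0$, which is harmless since the bound is local and we restrict to $|y|<L$) to get $\IE|z(s,t,y)|^k \le C(t-s)^{k\alpha}$ for each fixed $y$, where $z(s,t,y)$ is the increment functional with $y(s)=y$; by the solution flow identification $z(s,t,y)=Z(s,t,y)$ almost surely for each fixed $(s,t,y)$.

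Next I would run the chaining over dyadic intervals. For each $m$ consider the dyadic intervals $[j2^{-m}T,(j+1)2^{-m}T]$ contained in $[0,T]$ and, for each such interval, the endpoints $y$ ranging over the lattice $2^{-m}\mathbb{Z}^d \cap \{|y|\le L+1\}$, which has $O(2^{md})$ points. By Lemma \ref{kb} and Chebyshev, $\IP(|Z(s,t,y)| > 2^{-mq}) \le C 2^{mkq} 2^{-mk\alpha}$ for each such triple; summing over the $O(2^m)$ intervals and $O(2^{md})$ lattice points gives a bound $C 2^{m(1+d+kq-k\alpha)}$, which is summable in $m$ by our choice of $k$. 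By Borel–Cantelli, almost surely there is $m_0$ such that for all $m\ge m_0$, all dyadic intervals at level $m$, and all lattice points $y$ at scale $2^{-m}$, we have $|Z(s,t,y)|\le 2^{-mq}$.

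Then I would pass from the lattice/grid to all $(s,t,y)$ by the standard dyadic-decomposition argument. Given arbitrary $0\le s<t\le T$ with $t-s$ small, write $[s,t]$ as a finite disjoint union of dyadic intervals with at most two intervals of each level $\ge m_1$ where $2^{-m_1}\asymp t-s$, and use the "additivity defect" of $Z$: from the consistency relation for $A^{rj}$ and the definition of $Z$ one gets, for $s\le u\le t$,
\[
Z(s,t,y) = Z(s,u,y) + Z(u,t,F(s,u,y)) + \big(f^i_j(F(s,u,y))-f^i_j(y)-g^i_{rj}(y)W^r(s,u)\big)W^j(u,t) + \big(g^i_{rj}(F(s,u,y))-g^i_{rj}(y)\big)A^{rj}(u,t),
\]
and the two correction terms are $O((u-s)^{2/p}(t-u)^{1/p})$ and $O((u-s)^{1/p}(t-u)^{2/p})$ respectively using the a.s. Hölder bounds on $W$ and $A$ and local Lipschitz/Hölder bounds on $f,g,F$; these sum to $O((t-s)^{3/p})$ along the decomposition, and since $3/p > q$ (as $p<3$ and $q<\alpha\le (1+\gamma)/2 \le 3/2$... here one checks $q$ can be taken $<3/p$, which holds for $q$ close to $1$; more carefully one notes $q < \alpha = (1+\gamma)/2$ and $\gamma<3$ only gives $\alpha<2$, so one should instead simply absorb the correction terms, which are $o((t-s)^q)$ after summation, into the constant) they are absorbed. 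Finally, to go from lattice points $y$ to arbitrary $|y|<L$, use that $Z$ is almost surely a $C^\beta$ function of $y$ on compacts (with $0<\beta<\gamma-1$): replacing $y$ by the nearest lattice point at scale $2^{-m}$ costs $O(2^{-m\beta})$, which is again dominated by $2^{-mq}$ once $\beta > q$; if $\gamma-1 \le q$ this last step needs more care and one instead chains the spatial variable too, at the cost of requiring $k$ larger, using the spatial moment bound $\IE|Z(s,t,y)-Z(s,t,y')|^k \le C(t-s)^{?}|y-y'|^{?}$ derived by the same stochastic estimates applied to the difference of two flows.

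The main obstacle I anticipate is precisely this last point: getting a quantitative spatial modulus for $Z$ that is strong enough to feed the chaining. Lemma \ref{kb} as stated is only pointwise in $y$, and the pre-stated $C^\beta$-in-$x$ regularity of the flow (hence of $Z$) may have $\beta$ too small to beat $q$ directly. The clean fix is to prove a two-variable Kolmogorov bound $\IE|Z(s,t,y)-Z(s,t,y')|^k \le C(t-s)^{k\alpha'}|y-y'|^{k\beta''}$ for suitable exponents — which follows from the same It\^o-integral estimates \eqref{ebd} applied to differences, using Lipschitz bounds on $f$, $g$ and on the flow $F$ — and then run a joint chaining in $(s,t,y)$. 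Everything else is routine once the exponent bookkeeping ($q<\alpha$, $p<3$, $\beta''$ chosen to make the lattice sum converge) is set up correctly.
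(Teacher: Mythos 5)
The key difficulty you flag at the end — that the a.s. $C^\beta$ spatial regularity of $Z$ has $\beta<\gamma-1$, which is typically much smaller than $q>1$, so that replacing $y$ by a lattice point at scale $2^{-m}$ only gives an error $O(2^{-m\beta})$, not $O(2^{-mq})$ — is a genuine gap, and your proposed repair (a two-variable Kolmogorov bound $\IE|Z(s,t,y)-Z(s,t,y')|^k\le C(t-s)^{?}|y-y'|^{?}$ plus joint chaining) is a heavier instrument than the situation calls for, and is not carried out. The paper dispatches this problem much more cheaply: instead of placing the spatial lattice and the time grid at the same mesh $2^{-N}$, it places both at the far finer mesh $\eta_N=2^{-Nq/\beta}$. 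Then the qualitative $C^\beta$ modulus of $Z$ in $(s,t,y)$ — with the small $\beta<\min(\tfrac12,\gamma-1)$ that is already known to hold a.s. — gives a correction $B\eta_N^\beta=B2^{-Nq}$ of exactly the required size, no matter how small $\beta$ is. The cost is that the grid now has $\eta_N^{-d-2}$ triples $(s,t,y)$ rather than $2^{N(d+2)}$, but this is absorbed by taking $k$ large enough that $k(\A-q)>1+q(d+2)\beta^{-1}$; the Chebyshev/Borel--Cantelli sum then still converges. With this finer mesh there is also no need for your dyadic decomposition of the time interval or the additivity-defect identity for $Z$: the same $C^\beta$ bound over the gap $\eta_N$ handles the passage from grid times $(s',t')$ to arbitrary $(s,t)$ directly.

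In short, your moment estimate from Lemma \ref{kb}, Chebyshev, Borel--Cantelli, and the use of the a.s.\ flow regularity are all the right ingredients; what is missing is the single idea of decoupling the lattice mesh from the time scale by choosing $\eta_N\ll 2^{-N}$, which makes the available (weak) spatial regularity sufficient and renders the more delicate two-parameter Kolmogorov argument unnecessary.
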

\begin{proof} Fix $0<\beta<\min(\frac12,\gamma-1)$, and then fix $k$ large enough that $k(\A-q)>1+q(d+2)\beta^{-1}$. Next, for any
positive integer $N$, let $\eta_N=2^{-Nq/\beta}$ and let $\Omega_N$ be a finite set in $\R^d$ such that for any $y\in\R^d$ with $|y|<L$
one can find $y'\in\Omega_N$ with $|y-y'|<\eta_N$, and such that $\#(\Omega_N)\leq C_1\eta_N^{-d}$. Also let $\Lambda_N$ be a finite set
in $[0,T]$ with $\#(\Lambda_N)\leq T\eta_N^{-1}$ such that for any $t\in[0,T]$ there is $t'\in\Lambda_N$ with $|t-t'|<\eta_N$. Then for
any $\lambda>0$ we have, by lemma \ref{kb}, that
\[\begin{split}\IP(|Z(s,t,y)\geq\lambda2^{-Nq}\ {\rm for\ some}\ y\in\Omega_N\ {\rm and}\ s,t\in\Lambda_N\ {\rm with}\ t-s<2^{-N})
&\leq C_2\eta_N^{-d-2}\lambda^{-k}2^{-Nk(q-\A)}\\
&\leq C_2\lambda^{-k}2^{-N}\end{split}\]
It follows that with probability 1 there is $\lambda>0$ such that for every choice of $N\in\mathbb{N}$, $y\in\Omega_N$ and $s,t\in
\Lambda_N$ with $0<t-s<2^{-N}$ we have $|Z(s,t,y)|\leq\lambda2^{-Nq}$. It also follows from the above-mentioned $C^\beta$ property of
$Z$ that, with probability 1, there exists $B>0$ such that, for any $y,y',s,s',t,t'$ with $|y|<L$, $|y-y'|<\eta_N$, $|s-s'|<\eta_N$ and
$|t-t'|<\eta_N$ we have $|Z(s,t,y)-Z(s,t,y')|<B\eta_N^\beta=B2^{-Nq}$. Then given $y,s,t$ with $|y|<L $ and $t-s<2^{-N-1}$, we choose
$y'\in\Omega_N$ with $|y-y'|<\eta_N$, and $s',t'\in\Lambda_N$ with $|s-s'|<\eta_N$, $|t-t'|<\eta_N$, and conclude that $|Z(s,t,y)|\leq
(\lambda+B)2^{-Nq}$, and the required result follows.
\end{proof}

\begin{prop}\label{ie} 
Suppose $f\in C^\gamma$ where $\gamma>1$. Then, with probability 1, for any choice of $y_0$ the It\^{o} equation
$dy_i=f^i_jdW^j$ with $y(0)=y_0$ has either a solution in the sense of Definition \ref{def2} (with $\omega(t)=t$) for all $t\geq0$ or,
for some $T>0$, a solution on $0\leq t<T$ with $y(t)\rightarrow\infty$ as $t\rightarrow T$. Moreover the solution is unique in the sense
that if $\tilde{y}$ is any solution on $0\leq t<\tau$ in the sense of Definition \ref{def2} then $\tilde{y}=y$ for $0\leq t<\tau$.
\end{prop}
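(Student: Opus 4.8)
\emph{Proof proposal.} The plan is to realise the solution as the It\^o solution flow evaluated at $y_0$, to verify Definition \ref{def2} by means of Lemma \ref{zb}, and to obtain uniqueness by a telescoping comparison of an arbitrary Definition \ref{def2}-solution against the flow. Before anything else I would fix a probability-one event $\Omega_0$ on which: the solution flow $F$ of (\ref{ieq}) quoted before Lemma \ref{zb} exists up to the explosion time, satisfies the cocycle identity $F(s,t,x)=F(u,t,F(s,u,x))$ for all $s\le u\le t$ below explosion and all $x$, and carries the stated $C^\beta$-in-$(s,t)$, locally-Lipschitz-in-$x$ regularity with bounds uniform on compacts; and the conclusion of Lemma \ref{zb} holds for every pair of positive integers $(T,L)$ --- a countable intersection --- hence, by monotonicity in $T$ and $L$, for all real $T,L$. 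Since $f\in C^\gamma$ with $\gamma>1$ is locally Lipschitz, all of this is legitimate.

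On $\Omega_0$ set $y(t)=F(0,t,y_0)$ for $0\le t<\tau$, where $\tau$ is the explosion time; the standard explosion dichotomy for locally Lipschitz SDE's gives either $\tau=\infty$ or $|y(t)|\to\infty$ as $t\to\tau$. For existence in the sense of Definition \ref{def2}, fix $t_0<\tau$; since $y$ is continuous, $|y(s)|<L$ on $[0,t_0]$ for some integer $L$. By the cocycle identity $y(t)=F(s,t,y(s))$, so from the definition of $Z$,
\[Z^i(s,t,y(s))=y^i(t)-y^i(s)-f^i_j(y(s))(W^j(t)-W^j(s))-g^i_{rj}(y(s))A^{rj}_I(s,t),\]
which is exactly the defect appearing in (\ref{eq10}). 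Picking $q$ with $1<q<(1+\gamma)/2$, Lemma \ref{zb} bounds $|Z(s,t,y(s))|$ by $C(t-s)^q$ for $0\le s<t\le t_0$; as $q>1$ this exhibits $y$ as a solution of (\ref{eq1}) on $[0,t_0]$ with $\tilde\omega(t)=t$ and $\theta(\delta)=C\delta^q=o(\delta)$. Since $t_0<\tau$ is arbitrary, $y$ solves (\ref{eq1}) on $[0,\tau)$.

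For uniqueness, let $\tilde y$ be any Definition \ref{def2}-solution, with associated $\tilde\omega$ and $\theta$, and fix $t_0$ in its domain with $t_0<\tau$. Then $\tilde y$ is bounded on $[0,t_0]$ (it satisfies (\ref{eq10})); enlarge $L$ so that $y$ and $\tilde y$, together with a little room, stay in the ball of radius $L$. For a partition $0=s_0<\dots<s_N=t_0$ put $v_k=F(s_k,t_0,\tilde y(s_k))$; then $v_0=F(0,t_0,y_0)=y(t_0)$, $v_N=\tilde y(t_0)$, and $\tilde y(t_0)-y(t_0)=\sum_{k}(v_{k+1}-v_k)$. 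Using $v_k=F(s_{k+1},t_0,F(s_k,s_{k+1},\tilde y(s_k)))$, the definition of $Z$, and (\ref{eq10}) for $\tilde y$, one gets $|\tilde y(s_{k+1})-F(s_k,s_{k+1},\tilde y(s_k))|\le\theta(\tilde\omega(s_{k+1})-\tilde\omega(s_k))+C(s_{k+1}-s_k)^q$, the $Z$-term being bounded by Lemma \ref{zb} since $|\tilde y(s_k)|<L$. Applying the uniform Lipschitz bound $\Lambda$ for $x\mapsto F(s_{k+1},t_0,x)$ (valid once the mesh is small enough that $F(s_k,s_{k+1},\tilde y(s_k))$ stays in the compact set) and summing gives
\[|\tilde y(t_0)-y(t_0)|\le\Lambda\sum_k\bigl(\theta(\tilde\omega(s_{k+1})-\tilde\omega(s_k))+C(s_{k+1}-s_k)^q\bigr).\]
Letting the mesh tend to $0$: $\sum_k(s_{k+1}-s_k)^q\le(\text{mesh})^{q-1}t_0\to0$ since $q>1$; and since $\theta(\delta)=o(\delta)$ and $\tilde\omega$ is continuous increasing, for any $\epsilon>0$ a sufficiently fine partition makes $\sum_k\theta(\tilde\omega(s_{k+1})-\tilde\omega(s_k))\le\epsilon(\tilde\omega(t_0)-\tilde\omega(0))$. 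Hence $\tilde y(t_0)=y(t_0)$ for every $t_0$, i.e. $\tilde y=y$.

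The delicate point is the uniqueness step. It is a discrete-approximation argument in the spirit of the proofs of Theorems \ref{th1} and \ref{th3}, but with the It\^o flow $F$ --- rather than the (modified) Euler scheme --- as the reference path; the two decisive inputs are the exponent $q>1$ of the \emph{probabilistic} estimate of Lemma \ref{zb} and the hypothesis $\theta(\delta)=o(\delta)$ built into Definition \ref{def2}. In particular the argument goes through for every $\gamma>1$, including the range $\gamma\le2$ where the purely deterministic uniqueness of Theorem \ref{th5} is unavailable --- the gain is precisely that Lemma \ref{zb} supplies an exponent strictly above $1$ however small $\gamma-1$ is. One must also be scrupulous that the exceptional null set is fixed once and for all, independently of $y_0$ and of the competitor $\tilde y$; this is why Lemma \ref{zb} is invoked only along the countable family of integer pairs $(T,L)$ and then upgraded by monotonicity, and why the almost-sure cocycle and regularity properties of the flow are recorded at the outset.
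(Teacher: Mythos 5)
Your proposal follows essentially the same strategy as the paper: realise $y$ as the It\^o flow, verify Definition \ref{def2} via the increment estimate of Lemma \ref{zb}, and run a telescoping comparison of any competing solution against the flow using the Lipschitz property of $F$. Two genuine differences are worth recording. First, you go directly from $t=0$ and show $\tilde y(t_0)=F(0,t_0,y_0)$ for every $t_0$, whereas the paper introduces the first disagreement time $\tau_1$ and derives a contradiction on $[\tau_1,\tau']$; your version is a little cleaner and just as valid. Second, your existence step is a direct verification that the flow satisfies (\ref{eq10}) using the $Z$-bound, whereas the paper cites Theorem \ref{th2}. In fact your route is the more reliable one: Theorems \ref{th2} and \ref{th4} are stated for $\gamma>p$, and for Brownian motion one has $p>2$, so for $1<\gamma\le2$ (exactly the range this proposition is meant to cover beyond Theorem \ref{th7}) that citation does not apply as written; the flow-based verification is what actually makes existence go through for all $\gamma>1$.

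There is, however, one omission you should repair. Lemma \ref{zb} (via Lemma \ref{kb}) is stated for $f\in C^\gamma_0$, and the flow regularity you quote from before Lemma \ref{zb} is also established in the compactly supported (hence globally Lipschitz) setting. For general $f\in C^\gamma$, you must therefore choose a fixed countable family of truncations $f^{(n)}\in C^\gamma_0$ with $f^{(n)}=f$ on $\{|y|\le n\}$, build the flows $F^{(n)}$ and remainders $Z^{(n)}$ from these, put the almost-sure bounds of Lemma \ref{zb} and the flow Lipschitz estimate into the null set for every $n$, and then on a given working interval pick $n$ so large that all the trajectories in play stay inside $\{|y|<n\}$, where $F^{(n)}$ and $F$ coincide. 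This is exactly the role of the sequence $(C_n)$ in the paper's argument, and you need it in addition to the countable family of radii $L$ and horizons $T$, because the null set must be fixed before $f$ is restricted to a ball dictated by the particular solutions under comparison. Once this indexing over $n$ is added, your argument is complete and correct.
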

\begin{proof} For $n=1,2,\cdots$ let $f^{(n)}\in C^\gamma_0$ so that $f^{(n)}(y)=f(y)$ for $|y|\leq n$. Then we have the associated
flow $F^{(n)}(s,t,y)$ and $Z^{(n)}(s,t,y)$ defined as above for $f^{(n)}$ in place of $f$. By Lemma \ref{zb}, with probability 1
there is a sequence $(C_n)$ such that
\Z\label{zq}|Z^{(n)}(s,t,y)|\leq C_n(t-s)^q\z
whenever $n\in{\mathbb N}$, $0\leq s<t<n$ and $|y|<n$.
Using the Lipschitz property of the flow, we can also require that
\Z\label{fq}|F^{(n)}(s,t,x)-F^{(n)}(s,t,y)|\leq C_n|x-y|\z
whenever $n\in{\mathbb N}$, $0\leq s<t<n$ and $|x|,|y|<n$. We fix
a Brownian path $W$ for which these conditions hold, and prove the required existence and uniqueness for a solution driven by this path.

The existence of a solution $y$ is a consequence of Theorem \ref{th2}. To prove uniqueness, suppose $\tilde{y}$ is a solution
on $[0,\tau)$, with $\tau\leq T$, which is not identical to $y$ on $[0,\tau)$. Let $\tau_1=\sup\{t:\ t\geq0$ and $y(s)=\tilde{y}(s)$
for $0\leq s<t\}$. Then $0\leq\tau_1<\tau$ and $y(\tau_1)=\tilde{y}(\tau_1)$; we let $y_1=y(\tau_1)$. Now fix $\tau'$ with $\tau_1
<\tau'<\tau$ and choose $n>\tau$ such that $|y(t)|<n$ and $|\tilde{y}(t)|<n$ for all $t\in[\tau_1,\tau']$.\vspace{.2cm}\\
{\bf Claim:} $y(t)=F^{(n)}(\tau_1,t,y_1)$ for $\tau_1\leq t\leq\tau'$.

To prove this claim, fix $t\in[\tau_1,\tau']$ and let $N\in{\mathbb N}$. Let $\tau_1=t_0<t_1<\cdots<t_N=t$ with $t_{k+1}-t_k\leq N^{-1}$.
Let $v_k=y(t_k)$ and $w_k=F^{(n)}(\tau_k,t,v_k)$. Now from (\ref{eq10}) we have
\[|v^i_{k+1}-v^i_k-f^{(n)i}_j(v_k)(W^j_{k+1}-W^j_k)-g^{(n)i}_{rj}A^{rj}(t_k,t_{k+1})|\leq\theta(\tilde{\omega}(t_k,t_{k+1}))\]
where $\theta$ and $\tilde{\omega}$ are is in (\ref{eq10}). Together with (\ref{zq}) this gives
\[|v_{k+1}-u_k|\leq\theta(\tilde{\omega}(t_k,t_{k+1}))+C_nN^{-q}\]
where $u_k=F^{(n)}(t_k,t_{k+1},v_k)$. Then
\[|w_{k+1}-w_k|=|F^{(n)}(t_{k+1},t,v_{k+1})-F^{(n)}(t_{k+1},t,u_k)|\leq C_n\left(\theta(\tilde{\omega}(t_k,t_{k+1}))+C_nN^{-q}\right)\]
and so
\[|y(t)-F^{(n)}(\tau_1,t,y_1)|=|w_N-w_0|\leq C_n\left(\sum_k\theta(\tilde{\omega}(t_k,t_{k+1}))+C_nN^{1-q}\right)\]
which tends to 0 as $N\rightarrow\infty$, so we conclude that $y(t)=F^{(n)}(\tau_1,t,y_1)$ as claimed.

The same argument applies to $\tilde{y}$ and we conclude that $\tilde{y}=y$ on $[\tau_1,\tau']$, contradicting the definition
of $\tau_1$. This completes the proof of uniqueness.
\end{proof}

For Stratonovich equations the proof of Proposition \ref{ie} runs into difficulties because the existence of a solution flow
has not been proved in general, and the validity of the Proposition is an open question. In the case when the matrix of coefficients 
is nonsingular then it can be proved, using a standard type of change of variables which converts the equation to an It\^{o} equation, as
we now show.

\begin{lemma}\label{si}
Let $1<\gamma<2$, let $U$ be an open subset of $\R^n$, let $y$ be a solution of the Stratonovich equation $dy_i=f^i_j\circ dW^j$ in the
sense of Definition \ref{def2} on $\tau_1\leq t\leq\tau$, where $f\in C^\gamma$ and suppose $y([\tau_1,\tau])\subseteq U$. Suppose $\psi:
U\rightarrow\R^d$ is $C^{1+\gamma}$ and each component of $\psi$ satisfies $\sigma^{kh}(x)\partial_{kh}\psi^i(x)+\rho^k\partial_k
\psi^i(x)=0$ where
\Z\label{sr}\sigma_{kh}(y)=f^k_j(y)f^h_j(y)\ \ {\rm and}\ \ \rho^i(y)=f^k_j(y)\partial_kf^i_j(y)\z
Suppose also that $\tilde{f}$ is $C^\gamma$ on $\R^d$ and $\tilde{f}^i_j(\psi(y))=\partial_h\psi^i(y)f^h_j(y)$ for $y\in U$.

Then $x(t)=\psi(y(t))$ is a solution of the It\^{o} equation $dx^i=\tilde{f}^i_jdW^j$ on $[\tau_1,\tau]$ in the sense of Definition
\ref{def2}.
\end{lemma}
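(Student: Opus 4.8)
The plan is to verify the defining inequality of Definition \ref{def2} directly for the path $x(t)=\psi(y(t))$, the vector field $\tilde f$ and the iterated integral $A_I$, by expanding $\psi$ to second order along $y$ and re-expressing everything in terms of $W^j(s,t)$ and $A^{rj}_I(s,t)$ modulo errors of order $(t-s)^q$ with $q>1$; the hypothesis that each $\psi^i$ is annihilated by $\sigma^{kh}\partial_{kh}+\rho^k\partial_k$ is exactly what cancels, at the very end, the single surviving term of exact order $t-s$. First I localise: since $y([\tau_1,\tau])$ is a compact subset of $U$, I may assume $f,\tilde f\in C^\gamma_0$ and that $\psi$ together with its derivatives of order $\le 2$ is bounded on a compact neighbourhood of $y([\tau_1,\tau])$. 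Because $\gamma>1$, fix $p$ and $\gamma'$ with $2<p<\gamma'<\min(3,1+\gamma)$; then $f\in C^\gamma_0\subseteq C^{\gamma'-1}_0$, and $W$ satisfies the $p$- and $p/2$-variation conditions with $\omega(t)=t$. Writing $g^h_{rj}=f^m_r\partial_mf^h_j$ and $J^h(s,t)=y^h(t)-y^h(s)-f^h_j(y(s))W^j(s,t)-g^h_{rj}(y(s))A^{rj}_S(s,t)$, Remark 3 applied to the Stratonovich solution $y$ (whose characterisation uses $A_S$) gives $|J(s,t)|\le C(t-s)^{\gamma'/p}$ and hence $|y(t)-y(s)|\le C(t-s)^{1/p}$; I shall also use $|A^{rj}_S(s,t)|\le (t-s)^{2/p}$, the identity $W^j(s,t)W^l(s,t)=A^{jl}_S(s,t)+A^{lj}_S(s,t)$, and $A^{jl}_S(s,t)=A^{jl}_I(s,t)+\tfrac12\delta^{jl}(t-s)$.

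Next I expand. By Taylor's theorem, using that the second derivatives of $\psi$ are Hölder of exponent $\gamma-1$,
\[x^i(t)-x^i(s)=\partial_h\psi^i(y(s))\,\xi^h+\tfrac12\partial_{hk}\psi^i(y(s))\,\xi^h\xi^k+R,\qquad \xi=y(t)-y(s),\]
with $|R|\le C|\xi|^{1+\gamma}=O((t-s)^{(1+\gamma)/p})=o(t-s)$. Substituting $\xi^h=f^h_j(y(s))W^j(s,t)+g^h_{rj}(y(s))A^{rj}_S(s,t)+J^h(s,t)$ into the linear term, and $\xi^h\xi^k=f^h_j(y(s))f^k_l(y(s))W^j(s,t)W^l(s,t)+o(t-s)$ into the quadratic term (all discarded cross-terms being $O((t-s)^{3/p})$ or smaller), then replacing $W^jW^l$ by $A^{jl}_S+A^{lj}_S$ and symmetrising in $(h,k)$ and in the pair of $W$-indices, I arrive at
\[x^i(t)-x^i(s)=\partial_h\psi^i(y(s))f^h_j(y(s))\,W^j(s,t)+G^i_{rj}(y(s))\,A^{rj}_S(s,t)+o(t-s),\]
where $G^i_{rj}=\partial_h\psi^i\,f^m_r\partial_mf^h_j+\partial_{hk}\psi^i\,f^h_rf^k_j$.

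Now I identify the coefficients. The hypothesis $\tilde f^i_j(\psi(y))=\partial_h\psi^i(y)f^h_j(y)$ makes the coefficient of $W^j$ equal to $\tilde f^i_j(x(s))$. Differentiating that identity in $y^k$ gives $[\partial_m\tilde f^i_j](\psi(y))\,\partial_k\psi^m(y)=\partial_{kh}\psi^i(y)f^h_j(y)+\partial_h\psi^i(y)\partial_kf^h_j(y)$; contracting with $f^k_r(y)$ and using $f^k_r(y)\partial_k\psi^m(y)=\tilde f^m_r(\psi(y))$ yields $\tilde g^i_{rj}(x)=\tilde f^m_r(\psi(y))\,[\partial_m\tilde f^i_j](\psi(y))=G^i_{rj}(y)$, where $\tilde g^i_{rj}=\tilde f^h_r\partial_h\tilde f^i_j$ is the second-order coefficient in Definition \ref{def2} for the equation $dx^i=\tilde f^i_j\,dW^j$. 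Hence $x^i(t)-x^i(s)=\tilde f^i_j(x(s))W^j(s,t)+\tilde g^i_{rj}(x(s))A^{rj}_S(s,t)+o(t-s)$. Passing from $A_S$ to $A_I$ introduces the extra term $\tfrac12\tilde g^i_{jj}(x(s))(t-s)$; but $\tilde g^i_{jj}(\psi(y))=G^i_{jj}(y)=\rho^h(y)\partial_h\psi^i(y)+\sigma^{hk}(y)\partial_{hk}\psi^i(y)$ (since $g^h_{jj}=\rho^h$ and $f^h_jf^k_j=\sigma^{hk}$ after summing over $j$), which vanishes by the PDE satisfied by $\psi$. Therefore $x^i(t)-x^i(s)-\tilde f^i_j(x(s))W^j(s,t)-\tilde g^i_{rj}(x(s))A^{rj}_I(s,t)=O((t-s)^q)$ with $q>1$, so with $\tilde\omega(t)=t$ and $\theta(\delta)=C\delta^q$ this is precisely the inequality of Definition \ref{def2}, and $x$ is a solution of the Itô equation $dx^i=\tilde f^i_j\,dW^j$ on $[\tau_1,\tau]$.

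I expect the routine but delicate part to be the index bookkeeping in the two substitutions above — in particular checking that each discarded cross-term is genuinely $o(t-s)$, which is why $p$ must be chosen close to $2$ and why $\gamma>1$ is needed. The conceptual heart is the final step: seeing that the candidate drift $\tfrac12\tilde g^i_{jj}(x(s))(t-s)$ equals $\tfrac12(\sigma^{hk}\partial_{hk}\psi^i+\rho^h\partial_h\psi^i)(y(s))(t-s)$, so that the PDE imposed on $\psi$ is exactly the condition that removes it.
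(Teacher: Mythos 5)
Your proposal is correct and follows essentially the same route as the paper: Taylor-expand $\psi$ about $y(s)$ to second order, substitute the Stratonovich increment formula for $y(t)-y(s)$, discard the sub-leading cross-terms, and observe that after replacing $W^jW^r$ by $A^{jr}_S+A^{rj}_S$ the coefficient of $A^{rj}_S$ is exactly $\tilde g^{\,i}_{rj}(x(s))$ while the passage $A_S\to A_I$ produces the drift $\tfrac12\tilde g^{\,i}_{jj}(t-s)=\tfrac12(\sigma^{hk}\partial_{hk}\psi^i+\rho^h\partial_h\psi^i)(t-s)$, which the PDE kills. The paper compresses this into a single displayed identity preceded by ``a calculation shows''; you have supplied that calculation (including the useful intermediate identity $\tilde g^{\,i}_{rj}(\psi(y))=\partial_h\psi^i g^h_{rj}+\partial_{hk}\psi^i f^h_r f^k_j$, obtained by differentiating $\tilde f^i_j\circ\psi=\partial_h\psi^i f^h_j$ and contracting with $f^k_r$). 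This is a genuine elaboration but not a different method.
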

\begin{proof} Fix $\beta$ with $\frac13<\beta<\frac12$. By assumption
\[y^i(t)-y^i(s)=f^i_j(y(s))W^j(s,t)+g^i_{rj}(y(s))A^{rj}_S(s,t)+R^i(s,t)\]where $|R^i(s,t)|\leq
\theta(\tilde{\omega}_{st})$ (where $\theta$ and $\tilde{\omega}$ are as in (\ref{eq10})). Now $x(t)=\psi(x(t))$ and expanding $\psi(y)$
about $y=y(s)$ gives
\[\begin{split}x^i(t)=&\psi^i(y(s))+\partial_h\psi^i(y(s))f^h_j(y(s))W^j(s,t)+\frac12\partial_{hk}\psi^i(y(s))f^h_jf^k_rW^j(s,t)W^r(s,t)\\
&+\partial_h\psi^ig^h_{rj}A^{rj}_S(s,t)+O\left(\theta(\tilde{\omega}_{st})+(t-s)^{3\beta}\right)\end{split}\]
Now a calculation shows that
\[\frac12\partial_{hk}\psi^i(y(s))f^h_jf^k_rW^j(s,t)W^r(s,t)+\partial_h\psi^iG^h_{rj}A^{rj}_S(s,t)=\tilde{g}^i_{rj}(x)A^{rj}_I(s,t)\]
where $\tilde{g}^i_{rj}(x)=\tilde{f}^h_r(x)\partial_h\tilde{f}^i_j(x)$. Hence
\[x^i(t)=x^i(s)+\tilde{f}^i_j(x(s))W^j(s,t)+\tilde{g}^i_{rj}A^{rj}(s,t)+O\left(\theta(\tilde{\omega}_{st})+(t-s)^{3\beta}\right)\]
and the result follows.
\end{proof}

\begin{lemma}\label{sce}
Let $0<\alpha<1$ and consider the PDE
\Z\label{pse}\sigma^{kh}(y)\partial_{kh}\psi(y)+\rho^k\partial_k\psi(y)=0\z
where $\sigma$ is a matrix and $\rho$ a vector of $C^\A$ functions on a
neighbourhood of the origin in $\R^n$, such that $\sigma(0)$ is positive
definite. Then for any $\eta>0$, we can find a solution $\psi$ in $C^{2+
\alpha}$ on a neighbourhood of the origin, such that $|D\psi(0)-e_1|<\eta$,
where $e_1$ is the vector $(1,0,\cdots,0)$.
\end{lemma}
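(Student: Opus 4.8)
The plan is to solve a Dirichlet problem for the operator $L\psi:=\sigma^{kh}\partial_{kh}\psi+\rho^k\partial_k\psi$ on a small ball centred at the origin, and then let the ball shrink, using a scaled gradient estimate to pin down $D\psi$ at the centre.

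First I would shrink the neighbourhood of the origin to a ball $B_r=\{|y|<r\}$ small enough that $L$ is uniformly elliptic on $B_r$ — possible since $\sigma(0)$ is positive definite and $\sigma$ is continuous, so its symmetric part stays positive definite nearby — and so that the $C^\A$ norms of the coefficients $\sigma^{kh},\rho^k$ on $B_r$ are bounded by a constant independent of $r\le r_0$. The zero-order coefficient of $L$ is $0\le0$. By the standard Schauder existence theorem for the Dirichlet problem on a ball with smooth boundary data, there is a unique $\psi=\psi_r\in C^{2+\A}(\overline{B_r})$ with $L\psi_r=0$ in $B_r$ and $\psi_r(y)=y_1$ on $\partial B_r$. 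This already gives the regularity asserted; it remains to arrange $|D\psi_r(0)-e_1|<\eta$ by choosing $r$ small.

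To that end, set $g(y)=y_1$ and $w=\psi_r-g$. Since $\partial_{kh}g\equiv0$ and $\partial_kg=\delta_{k1}$, we have $Lw=-Lg=-\rho^1$ in $B_r$, $w=0$ on $\partial B_r$, and $\rho^1$ is bounded on $B_r$ by some $K$ independent of $r\le r_0$. Comparison with the barrier $(K/2\lambda)(r^2-|y|^2)$, where $\lambda$ is a lower ellipticity bound, gives $\sup_{B_r}|w|\le C_1Kr^2$. The interior gradient (or interior $C^{1,\A}$) estimate, rescaled to $B_r$ — with a constant uniform in $r\le r_0$, since the ellipticity constants and coefficient norms on $B_r$ are — yields
\[|Dw(0)|\le C_2\bigl(r^{-1}\sup_{B_r}|w|+rK\bigr)\le C_3Kr.\]
Hence $|D\psi_r(0)-e_1|=|D\psi_r(0)-Dg(0)|=|Dw(0)|\le C_3Kr$, and taking $r\le\min(r_0,\eta/(C_3K))$ gives the assertion.

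Equivalently one can run a blow-up argument: the rescaled functions $z\mapsto r^{-1}\psi_r(rz)$ solve Dirichlet problems on $B_1$ whose operators converge uniformly to the constant-coefficient operator $\sigma^{kh}(0)\partial_{kh}$ and whose boundary data is still $z_1$; interior Schauder estimates give compactness, and the unique limit must be the linear function $z_1$ (which solves the limiting problem), so $D\psi_r(0)\to e_1$. Either way, the only point needing care is that the constants in the gradient estimate do not blow up as $r\to0$; this is automatic here, because shrinking $B_r$ only improves the ellipticity and the Hölder bounds on the coefficients, so I expect the write-up to be short.
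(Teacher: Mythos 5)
Your main argument is correct and takes a genuinely different route from the paper. The paper rescales the neighbourhood to the unit ball via $y=\E x$, obtaining a one-parameter family of Dirichlet problems $\sigma_{kh}(\E x)\partial_{kh}\phi+\E\rho^k(\E x)\partial_k\phi=0$ on $B_1$ with fixed boundary data $x^1$, and then argues by continuity of the Schauder solution $\phi_\E$ in $\E$: at $\E=0$ the operator is constant-coefficient and $\phi_0(x)=x^1$ exactly, so $D\phi_\E(0)\rightarrow e_1$, and one scales back by $\psi(y)=\E\phi_\E(y/\E)$. You instead fix the equation and shrink the domain $B_r$, write $\psi_r=y_1+w$ so that $Lw=-\rho^1$, control $\sup|w|$ by an explicit barrier, and then use an interior gradient (or $C^{1,\A}$) estimate with constants uniform in $r\le r_0$ to get $|Dw(0)|\le C\,r$. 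Both routes are sound; the paper's continuity-in-$\E$ argument is shorter and needs no quantitative interior estimates (it offloads the work to a standard continuous-dependence result for Schauder solutions), while your version is more self-contained and explicit and gives a quantitative rate $O(r)$ for $|D\psi_r(0)-e_1|$, at the cost of having to check that the rescaled interior constants and the barrier really are uniform as $r\rightarrow0$ (which you do correctly). Your sketched blow-up alternative at the end is essentially the paper's argument in slightly different clothing (compactness plus uniqueness of the limit, rather than continuous dependence). One small point worth spelling out if you write this up fully: in the barrier step you need $Lv\le-K$ with $v=c(r^2-|y|^2)$, and the first-order term contributes $-2c\,\rho\cdot y$, which is only controlled by $2cKr$; so the inequality holds for small $r$ because $\operatorname{tr}\sigma$ is bounded below — a two-line remark, but it should be said.
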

\begin{proof} We use the change of variable $y=\E x$ to obtain the equation $\sigma_{kh}(\E x)\partial_{kh}\phi(x)+\E\rho^k(\E x)
\partial_k\phi(x)=0$. By the Schauder theory (see \cite{gt}), for small $\E\geq0$ this equation has a unique $C^{2+\A}$ solution $\phi_\E$
in the unit ball satisfying $\phi_\E(y)=y^1$ on the boundary $\{|y|=1\}$, and $\phi_\E$ depends continuously on $\E$. Also $\phi_0(y)=y^1$
so $D\phi_0(0)=e_1$. Hence for small enough $\E>0$ we have $|D\phi_\E(0)-e_1|<\eta$ and then we can take $\psi(y)=\E\phi_\E(y/\E)$.
\end{proof}

\begin{prop}\label{se}
Suppose $f\in C^\gamma$ and $1<p<\gamma$. Suppose also that the matrix $(f^i_j(y))$ has rank $n$ for every $y\in\R^n$. Then, with
probability 1, for any choice of $y_0$ the Stratonovich equation
$dy_i=f^i_j\circ dW^j$ with $y(0)=y_0$ has either a solution in the sense of Definition \ref{def2} (with $\omega(t)=t$) for all $t\geq0$ or, for
some $T>0$, a solution on $0\leq t<T$ with $y(t)\rightarrow\infty$ as $t\rightarrow T$. Moreover the solution is unique in the sense
that if $\tilde{y}$ is any solution on $0\leq t<\tau$ in the sense of Definition \ref{def2} then $\tilde{y}=y$ for $0\leq t<\tau$.
\end{prop}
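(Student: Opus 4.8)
The plan is to localise and then reduce to the Itô case via Lemma \ref{si}, using Lemma \ref{sce} to produce the required change of coordinates. First I would observe that existence is already guaranteed by Theorem \ref{th4} (applied with the Stratonovich $A^{rj}_S$), so the real content is uniqueness, and uniqueness is a local statement: it suffices to show that any two solutions $y,\tilde y$ agreeing at a point $\tau_1$ agree on a short interval $[\tau_1,\tau']$ to its right, since then the set where they agree is open and closed in $[0,\tau)$. So fix such a point $y_1=y(\tau_1)=\tilde y(\tau_1)$ and work in a small neighbourhood $U$ of $y_1$ on which both solutions live for $t$ close to $\tau_1$ (using the continuity and the Hölder bound $|y(t)-y(s)|\leq C\omega(s,t)^{1/p}$ from Remark 3).

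Next I would construct, near $y_1$, a coordinate map $\psi:U\to\R^n$ whose components each solve the second-order PDE $\sigma^{kh}\partial_{kh}\psi^i+\rho^k\partial_k\psi^i=0$ with $\sigma,\rho$ as in \eqref{sr}, and with $D\psi(y_1)$ invertible. Here I apply Lemma \ref{sce} once for each coordinate: since $(f^i_j(y_1))$ has rank $n$, the matrix $\sigma(y_1)=f^k_j(y_1)f^h_j(y_1)$ is positive definite, so Lemma \ref{sce} gives a $C^{2+\alpha}$ solution $\psi^i$ with $D\psi^i(y_1)$ as close as we like to $e_i$; choosing all $n$ of them close to the standard basis vectors makes $D\psi(y_1)$ invertible, hence $\psi$ a diffeomorphism of a (possibly smaller) neighbourhood of $y_1$ onto its image. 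Note $\gamma<2$ forces us into the regime $1<p<\gamma<2$ where $\psi\in C^{1+\gamma}$ as required by Lemma \ref{si}; since $\gamma<2$ we have $C^{2+\alpha}\subseteq C^{1+\gamma}$ for $\alpha$ chosen with $1+\gamma\le 2+\alpha$, which holds as $\gamma<2\le 2+\alpha$. Then set $\tilde f^i_j(\psi(y))=\partial_h\psi^i(y)f^h_j(y)$ and extend $\tilde f$ to a $C^\gamma_0$ function on all of $\R^d$.

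With these ingredients in place, Lemma \ref{si} tells us that both $x(t):=\psi(y(t))$ and $\tilde x(t):=\psi(\tilde y(t))$ are solutions, in the sense of Definition \ref{def2}, of the Itô equation $dx^i=\tilde f^i_j\,dW^j$ on $[\tau_1,\tau']$, with the common initial value $\psi(y_1)$. Now invoke Proposition \ref{ie}: since $\tilde f\in C^\gamma$ with $\gamma>1$, with probability $1$ the Itô equation has a unique solution in the sense of Definition \ref{def2} through any given starting point, so $x\equiv\tilde x$ on $[\tau_1,\tau']$, and applying $\psi^{-1}$ gives $y\equiv\tilde y$ there. This contradicts the assumed disagreement and establishes uniqueness; the dichotomy (global solution versus blow-up in finite time) then follows exactly as in Theorem \ref{th4}. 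The main obstacle I anticipate is the bookkeeping needed to run Lemma \ref{si} cleanly: one must verify that the solutions $y,\tilde y$ do stay inside the chart $U$ on a genuinely positive interval (controlled by the $\omega^{1/p}$ modulus), that the extension of $\tilde f$ off $\psi(U)$ does not affect the solution there, and that the ``with probability $1$'' exceptional sets from Lemma \ref{zb}, Proposition \ref{ie}, and the Hölder regularity of $W$ and $A^{rj}$ can all be taken simultaneously — but each of these is routine given the tools already developed.
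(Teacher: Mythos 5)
Your outline follows the paper's strategy—localise, construct a coordinate map $\psi$ via Lemma~\ref{sce}, reduce to an It\^{o} equation via Lemma~\ref{si}, and import the uniqueness from the It\^{o} setting—but there is a real gap in how you pass to the It\^{o} result, and it is exactly the point you flag at the end as ``routine bookkeeping''; it is not.

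The bifurcation point $y_1$ is itself a random quantity determined by the Brownian path $W$, and consequently so are the chart $\psi$ and the transformed coefficient $\tilde f$ that you construct from $y_1$. Proposition~\ref{ie} gives, for each \emph{fixed} deterministic $C^\gamma$ coefficient, a probability-one set of paths on which the It\^{o} equation is well-posed, and its null exceptional set depends on the coefficient. If $\tilde f$ is allowed to depend on $W$, you cannot just ``invoke Proposition~\ref{ie}'' as a black box: the event you would need is that $W$ avoids the exceptional set attached to a random $\tilde f(W)$, and there is no reason this has probability one. (It cannot be upgraded to ``with probability one, uniqueness for \emph{every} $C^\gamma$ coefficient,'' which is false—Theorem~\ref{th6}.) The paper breaks this circularity by building a \emph{countable deterministic} cover $(U_m)$ of $\R^n$, constructing $\psi^{(m)}$ and the corresponding It\^{o} coefficients $f^{(m)}\in C^\gamma_0$ once and for all, applying Lemma~\ref{zb} to each $f^{(m)}$ to obtain, on a single event of full probability, the flow estimates $|Z^{(m)}(s,t,x)|\le C_{rm}(t-s)^q$ for all $m$ simultaneously, and only then, on that fixed path, locating $y_1$ in some $U_m$ and rerunning the flow argument from the proof of Proposition~\ref{ie}. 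That countable-cover step is the substance of the proof, not bookkeeping, and your proposal omits it.

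Two smaller remarks. First, you deduce ``$\gamma<2$'' from the hypotheses of Lemma~\ref{si}, but in the Brownian setting Definition~\ref{def2} operates with $2\le p<\gamma\le 3$, so $\gamma>2$; the ``$1<\gamma<2$'' in the statement of Lemma~\ref{si} appears to be a misprint, and your reasoning about the regularity regime should be run with $2<\gamma\le 3$. Second, the conclusion is reached not by citing Proposition~\ref{ie} but by redoing its flow computation with the uniform $Z^{(m)}$ bounds from Lemma~\ref{zb}; this distinction matters precisely because the chart index $m$ and the starting point $\psi^{(m)}(y_1)$ are determined only after the path is fixed.
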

\begin{proof} Let $y_1\in\R^n$. Then by Lemma \ref{sce} we can find $C^{1+\gamma}$ functions $\psi^i$ for $i=1,\cdots,n$ satisfying
(\ref{pse}) with (\ref{sr}) and such that $\psi^i(y_1)$ is close to $e_i$, where $e_1,\cdots,e_n$ is the standard basis of $\R^n$. Then
$\psi=(\psi^1,\cdots,\psi^n)$ has non-zero Jacobian at $y_1$ and so is a diffeomorphism on a neighbourhood $U$ of $y_1$, so that we can find
$\tilde{f}\in C^\gamma_0$ such that $\tilde{f}^i_j(\psi(y))=\partial_h\psi^i(y)f^h_j(y)$ for $y\in U$.

Hence we can cover $\R^n$ by a sequence of open sets $(U_m)$ such that for each $m$ there is a $C^{1+\gamma}$ mapping $\psi^{(m)}: 
U\rightarrow\R^n$ satisfying (\ref{pse}) with (\ref{sr})and $f^{(m)}\in C^\gamma_0$ such that $f^{(m)i}_j(\psi^{(m)}(y))=\partial_h
\psi^{(m)i}(y)f^{(m)h}_j(y)$ for $y\in U_m$. To the It\^{o} equation $dy^i=f^{(m)i}_jdW^j$ we associate a solution flow and $Z^{(m)}
(x,s,t)$ as before, and then by Lemma \ref{zb}, with probability 1 there is a double sequence $(C_{rm})$ such that $|Z^{(m)}(s,t,x)|\leq
C_{rm}(t-s)^q$ whenever $r,m\in{\mathbb N}$, $0\leq s<t<r$ and $x\in\psi^{(m)}(U_m)$. We fix a Brownian path $W$ for which this holds,
and now prove uniqueness as in the It\^{o} case.

If we have two solutions $y$ and $\tilde{y}$ with the same initial condition which are not identical, then we define $\tau_1$ and $y_1$
just as in the proof of Proposition \ref{ie}. Then $y_1\in U_m$ for some $m$. Let $\tau>\tau_1$ be such that for $\tau_1\leq t\leq\tau$,
$y(t)$ and $\tilde{y}(t)$ are in $U_m$. Then $x(t)=\psi^{(m)}(y(t))$ and $\tilde{x}(t)=\psi^{(m)}(\tilde{y}(t))$ are, by Lemma
\ref{si} both solutions of the It\^{o} equation $dx_i=f^{(m)i}_j(x)dW^j$ on $[\tau_1,\tau]$ in the sense of Definition \ref{def2} and the proof is
concluded just as for Proposition \ref{ie}.
\end{proof}

We remark that versions of Propositions \ref{ie} and \ref{se} can be proved in the same way when $f$ is given to be $C^\gamma$ on an open
set $V$ in $\R^n$ and $y_0\in V$, and in the case of Proposition \ref{se} the matrix $(f^i_j(y))$ is assumed to be nonsingular for all
$y\in V$. Then we obtain a solution $y(t)\in V$ which is either defined for all $t>0$, or defined on $[0,T)$ and $y(t)$ leaves $V$ as
$t\rightarrow T$ in the sense that $y^{-1}([0,t])$ is a compact subset of $V$ for all $0\leq t<T$.

Next we show that uniqueness can fail for $f\in C^{2-\epsilon}$.

\begin{lemma}\label{l4} Let $W(t)$ denote standard Brownian motion in $\R^d$
where $d\geq5$. Let $0<\A<d-4$. Then for $\E>0$,
\[\IP({\rm dist}(W([0,2]),W([3,\infty)))<\E)<C\E^\A\]
where $C$ is a constant depending only on $d$ and $\A$.
\end{lemma}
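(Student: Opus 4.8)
The plan is to condition on the Brownian path up to time $3$, use the transience of $d$-dimensional Brownian motion ($d\ge3$), and reduce the whole question to the convergence of one explicit integral — the hypothesis $d\ge5$ will enter precisely as the convergence condition for that integral. First I would simplify the geometry by time-reversal. Conditioning on $\mathcal F_3=\sigma(W(u):0\le u\le3)$, the future increment $Y(u):=W(2+u)-W(2)$, $u\ge0$, is a standard Brownian motion from the origin, independent of $W|_{[0,2]}$, while the time-reversal $X(v):=W(2-v)-W(2)$, $0\le v\le2$, of $W|_{[0,2]}$ about its right endpoint is also a standard Brownian motion from the origin. Since $W([0,2])-W(2)=X([0,2])$ and $W([3,\infty))-W(2)=Y([1,\infty))$,
\[\mathrm{dist}\big(W([0,2]),W([3,\infty))\big)\ \overset{d}{=}\ \mathrm{dist}\big(X([0,2]),\,Y([1,\infty))\big),\]
with $X,Y$ independent, so it suffices to bound $\IP(\mathrm{dist}(X([0,2]),Y([1,\infty)))<\E)$.

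Conditionally on $X$ and on $Y|_{[0,1]}$, the path $Y|_{[1,\infty)}$ is a Brownian motion started at $z:=Y(1)\sim N(0,I)$, independent of $X$. Cover the $\E$-neighbourhood $(X([0,2]))^{(\E)}$ by the side-$\E$ cubes $Q$ of the lattice $\E\mathbb Z^d$ that meet $(X([0,2]))^{(2\E)}$; call this family $\mathcal Q$. For $\E$ small these cubes are disjoint, of volume $\E^d$, and contained in $(X([0,2]))^{(C\E)}$. By subadditivity of hitting probabilities and the classical estimate $\IP_z(\text{BM from }z\text{ hits }B(w,r))=(r/|z-w|)^{d-2}$ (valid for $d\ge3$, $|z-w|\ge r$, and $\le1$ in general),
\[\IP\big(Y([1,\infty))\cap (X([0,2]))^{(\E)}\ne\emptyset\,\big|\,X,z\big)\ \le\ \sum_{Q\in\mathcal Q}\min\Big\{1,\ c_d\big(\E/\mathrm{dist}(z,Q)\big)^{d-2}\Big\}.\]

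Next I would average this. Averaging over $z\sim N(0,I)$, using $\min\{1,t\}\le t$ together with the bound $\IE_z|z-c|^{-(d-2)}\le C(1+|c|)^{-(d-2)}$ for the $(d-2)$-Riesz potential of a unit Gaussian (finite since $d-2<d$), the right side has conditional expectation at most $C\E^{d-2}\sum_{Q\in\mathcal Q}(1+|c_Q|)^{-(d-2)}$, where $c_Q$ is the centre of $Q$ (replacing $\mathrm{dist}(z,Q)$ by $|z-c_Q|$ only affects constants). Since $(1+|c_Q|)^{-(d-2)}\le C\E^{-d}\int_Q(1+|x|)^{-(d-2)}\,dx$, summing over the disjoint cubes gives $C\E^{-2}\int_{(X([0,2]))^{(C\E)}}(1+|x|)^{-(d-2)}\,dx$; taking expectation over $X$ and using Fubini,
\[\IP\big(\mathrm{dist}(X([0,2]),Y([1,\infty)))<\E\big)\ \le\ C\E^{-2}\int_{\R^d}(1+|x|)^{-(d-2)}\,\IP\big(X([0,2])\cap B(x,C\E)\ne\emptyset\big)\,dx.\]
Bounding $\IP(X([0,2])\cap B(x,C\E)\ne\emptyset)\le\min\{1,(C\E/|x|)^{d-2}\}$ (again by transience, enlarging the interval to $[0,\infty)$ only increases it) reduces matters to $\int_{\R^d}(1+|x|)^{-(d-2)}\min\{1,(C\E/|x|)^{d-2}\}\,dx$. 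Splitting the domain into $|x|\le C\E$, $C\E<|x|\le1$, and $|x|>1$, the first two pieces contribute $O(\E^{d-2})$, while the tail contributes a constant times $\E^{d-2}\int_1^\infty r^{3-d}\,dr$, which converges precisely when $d\ge5$ and then equals $C\E^{d-2}/(d-4)$. Hence the whole expression is at most $C\E^{-2}\cdot C\E^{d-2}=C\E^{d-4}$, and since $0<\A<d-4$ (and the bound is trivial for $\E$ bounded away from $0$) we get $\IP(\cdots)<C\E^\A$, with $C$ depending only on $d$ and $\A$.

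The main difficulty is controlling the random size of the set $X([0,2])$: its covering number at scale $\E$ is of order $\E^{-2}$ (the box-counting dimension of a Brownian path is $2$), and this must be combined with the distance-weighting forced by transience. The cube-cover/Gaussian-averaging/Fubini chain above carries out exactly this bookkeeping, turning ``covering number $\times$ hitting probability'' into the single integral $\int(1+|x|)^{-(d-2)}\IP(X([0,2])\text{ hits }B(x,\E))\,dx$; its convergence at infinity — equivalently, the fact that a $2$-dimensional compact set and an independent transient Brownian range (also ``$2$-dimensional'') almost surely fail to meet as soon as $2+2<d$ — is where the hypothesis $d\ge5$ is used, and the argument in fact yields the slightly sharper bound $C\E^{d-4}$.
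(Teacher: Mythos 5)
Your proof is correct, and it takes a genuinely different route from the paper's. The paper discretizes time: it fixes $N$, covers $W([0,2])$ by the $2N$ balls $B(W(j/N),N^{-\gamma})$ with $\gamma=(d-2-\alpha)^{-1}<\tfrac12$ (using a Gaussian tail bound to see that each time-$1/N$ segment stays in its ball with overwhelming probability), and then union-bounds the probability that $W([3,\infty))$ hits any of those $2N$ balls using the same transience estimate $\IP(\text{BM hits }B(a,r))\le c\,r^{d-2}$ that you invoke; choosing $N\approx\E^{-1/\gamma}$ gives exactly $C\E^\alpha$. You instead time-reverse about $W(2)$, which reduces the problem to two independent Brownian ranges $X([0,2])$ and $Y([1,\infty))$, cover the range $X([0,2])$ (not the time interval) by $\E$-cubes, and convert the covering-number-times-hitting-probability bookkeeping into the single integral $\E^{-2}\int(1+|x|)^{-(d-2)}\,\IP\bigl(X \text{ hits }B(x,C\E)\bigr)\,dx$ via a Gaussian Riesz-potential estimate and Fubini. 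Your argument is a little slicker and actually yields the sharper endpoint estimate $C(d)\,\E^{d-4}$ with a constant independent of $\alpha$, whereas the paper's choice of $\gamma$ degenerates as $\alpha\uparrow d-4$ and only gives $\E^\alpha$ for $\alpha$ strictly below $d-4$. (Two small points of hygiene, neither a real gap: the cubes should be taken to meet the $(1+\sqrt d)\E$-neighbourhood of $X([0,2])$, not the $2\E$-neighbourhood, so that they genuinely cover $(X([0,2]))^{(\E)}$; and the hitting estimate for a cube rather than a ball costs only the dimensional constant $c_d$ you already carry.)
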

\begin{proof} We use $c_1,c_2,\cdots$ for constants which depend only on
$d$ and $\A$. First we
make the observation that, for any given ball $B(a,r)$, we have
$\IP(W(t)\in B(a,r)$ for some $t\geq1)\leq c_1r^{d-2}$, which can be
verified by a straightforward calculation.

Let $\gamma=(d-2-\A)^{-1}<\frac12$ and fix a positive integer $N$. We write
$t_j=\frac jN$. Then, for $j=0,1,\cdots,2N-1$ we have
$\IP(W([t_j,t_{j+1}])\not\subseteq B(W(t_j),N^{-\gamma}))\leq
c_2e^{-\frac12N^{1-2\gamma}}$ and so
\[\IP({\rm for\ all}\  j=0,1,\cdots,2N-1\ {\rm we\ have}
\ W([t_j,t_{j+1}])\subseteq B(W(t_j),N^{-\gamma}))\geq1-2c_2Ne^{-\frac12
N^{1-2\gamma}}\]
Also, by the above
observation (applied to $W$ starting at time 2), we have for each
$j=0,1,\cdots,2N-1$ that
\[\IP(W([3,\infty))\ {\rm meets}\ B(W(t_j),2N^{-\gamma}))\leq c_3N^{-(d-2)\gamma}\] and so
\[\IP(W([3,\infty))\ {\rm avoids}\ B(W(t_j),2N^{-\gamma})
 \ {\rm for}\ j=0,1,\cdots,2N-1)\geq1-2c_3N^{1-(d-2)\gamma}\]
Putting these facts together we obtain
\[\IP({\rm dist}(W([0,2]),W([3,\infty)))<2N^{-\gamma})<c_3N^{1-(d-2)\gamma}+
2Nc_2e^{-\frac12N^{1-2\gamma}}<c_4N^{1-(d-2)\gamma}\] and the lemma
follows on choosing $N$ so that $\E\approx N^{-\gamma}$.
\end{proof}

\begin{thm}\label{th6} Let $\epsilon>0$ and let $W(t)$ be standard Brownian
motion on $\R^6$. Then, with probability 1, there exists a compactly
supported $C^{2-\epsilon}$ function $f$ on $\R^6$, such that the system
\[dy^i=dW^i,\ i=1,\cdots,5,\ \ \ \ \ dy^6=f(y)dW^6\]
has infinitely many solutions, in the sense of Definition \ref{def2}, satisfying the initial condition
$y(0)=0$.
\end{thm}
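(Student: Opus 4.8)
The plan is to construct, on a suitable event of probability one, an explicit function $f\in C^{2-\epsilon}_0(\R^6)$ for which the scalar equation $dy^6=f(y)dW^6$ along the fixed path of the first five coordinates admits a one-parameter (hence infinite) family of solutions in the sense of Definition \ref{def2}. The key idea is to exploit the non-self-intersection estimate of Lemma \ref{l4}: since $W$ restricted to $[0,2]$ and to $[3,\infty)$ are at positive distance with controlled probability, one can build $f$ which is \emph{identically zero} in a neighbourhood of the curve $W([0,2])$ except near a single point $W(t_0)$ (with $t_0\in(0,2)$), and which is designed so that the sixth equation behaves like a one-dimensional ODE $\dot u=h(u)$ with $h$ failing the Osgood/uniqueness condition at a point. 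Concretely one wants $f$ to vanish to just less than second order, so that near the branching time the sixth coordinate can either stay put or peel off, and one gets uniqueness failure exactly as in the classical example $\dot u=|u|^{\alpha}$ with $\alpha<1$; the exponent bookkeeping is arranged so that $f\in C^{2-\epsilon}$ rather than merely $C^{1-\epsilon}$, which is what the rough-path setting with $p$ slightly above $2$ demands (cf.\ the sharpness discussion: uniqueness can fail for $f\in C^\gamma$ whenever $\gamma<p<3$, and here $\gamma=2-\epsilon<p$ for $p$ chosen in $(2-\epsilon,2)$\,---\,wait, more precisely one picks $p\in(2,2+\delta)$ with $2-\epsilon<\text{the relevant H\"older threshold}$).

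The steps, in order, would be: (1) Fix $\epsilon>0$ and choose parameters $\gamma=2-\epsilon$ and a time $t_0\in(0,2)$; use Lemma \ref{l4} (with $d=6$, so $0<\A<2$ is available) together with Borel--Cantelli along a sequence of scales to show that, with probability one, there is $r>0$ such that $W([0,2])$ and $W([3,\infty))$ are at distance $\geq r$, and moreover the path $W$ near $t_0$ makes a genuine displacement in the sixth coordinate on arbitrarily small time intervals (a consequence of the law of the iterated logarithm / non-differentiability of Brownian motion). (2) Using these geometric facts, construct a smooth bump region: a function $f$ supported in a small ball around the point $\bar y=W(t_0)$ (translated appropriately in the sixth coordinate to sit on the solution curve), with $f\in C^{2-\epsilon}$, $f\geq0$, vanishing to order just below $2$ at $\bar y$, and arranged so that the driving increments $W^6(s,t)$ in the window around $t_0$ interact with $f$ to reproduce a one-dimensional branching ODE. (3) Verify that both the ``trivial'' continuation (where $y^6$ never leaves the zero set of $f$) and a one-parameter family of ``branching'' continuations (which leave at different times or with different profiles) each satisfy the defining inequality (\ref{eq10}) with an appropriate $\tilde\omega$ and $\theta(\delta)=o(\delta)$; here one checks the required $O(\omega^{\gamma/p})$-type bounds hold with $\gamma/p>1$ by taking $p$ slightly above $2$ and $\gamma=2-\epsilon$ suitably\,---\,the point being that on the bulk of $[0,\infty)$ where $f\equiv0$ the inequality is trivial, and near $t_0$ it reduces to a short explicit estimate for the one-dimensional model.

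The main obstacle I expect is Step (3): making the one-dimensional branching mechanism genuinely compatible with the rough-path notion of solution. In the classical ODE $\dot u=u^\alpha$ one branches because $u\equiv0$ and $u=(\text{const}\cdot(t-t_0))^{1/(1-\alpha)}$ are both solutions, but here the ``time'' is replaced by the Brownian increment $W^6(t_0,t)$, which oscillates in sign; so the construction must use, say, the positive excursions of $W^6$ away from $t_0$, or a clever choice of $f$ depending on several coordinates so that the effective one-dimensional driver is monotone. One must also ensure $f$ can be taken in $C^{2-\epsilon}$ and not worse: the branching solution $y^6(t)\sim |W^6(t_0,t)|^{\kappa}$ should have $\kappa$ chosen so that the consistency estimate closes, and simultaneously $f$, which looks like $|y^6|^{1-1/\kappa}$ near $\bar y$, lands in $C^{2-\epsilon}$\,---\,this forces $\kappa$ close to $1/(\epsilon-1)$... more carefully, $1-1/\kappa = 2-\epsilon$ would need $\kappa<0$, so in fact $f$ must vanish to order $2-\epsilon$ meaning $f(y)\sim \mathrm{dist}(y,\{f=0\})^{2-\epsilon}$ near a smooth zero manifold of codimension one, and the branching happens transversally to that manifold. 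Getting the exponents and the geometry of the zero set to cooperate with Lemma \ref{l4}'s separation estimate is the delicate part; everything else (the bump function regularity, Borel--Cantelli, verifying (\ref{eq10}) on the region where $f$ vanishes) is routine.
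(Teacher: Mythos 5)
Your proposal rests on a classical branching mechanism at a single degenerate zero of $f$ (as in $\dot u=|u|^\alpha$ with $\alpha<1$), but this cannot produce non-uniqueness for $f\in C^{2-\epsilon}$: the exponent bookkeeping you carry out yourself gives $\kappa<0$, and that is not a wrinkle to be ironed out but a genuine contradiction. The ODE $\dot u=|u|^\alpha$ branches at $u=0$ only when $\alpha<1$, i.e.\ only when $f$ fails to be $C^1$; here $f$ must vanish to order $2-\epsilon>1$, for which the transverse one-dimensional problem has a \emph{unique} solution through zero whether the zero set is a point or a hypersurface, and whether the driver is monotone or oscillating. So the attempted repair in your final sentence (``branching transversally to a smooth zero manifold'') does not help, and there is no way to close step (3). (Also, Lemma~\ref{l4} is applied in the paper to the five-dimensional $W^*=(W^1,\dots,W^5)$, so $d=5$ and $0<\alpha<1$, not $d=6$.)

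The paper's proof uses an entirely different mechanism, with three ingredients that are absent from your plan. First, $f$ is built as a multi-scale sum over dyadic blocks $k$ of bump functions $f_n$ supported near $W^*(I_n)$, where for each $k$ there are $\sim 2^k$ windows $I_n$ near $t=0$. Second, each block is multiplied by a cutoff $\phi(x^6/\rho_k)$ with $\phi(u)=u$ for $|u|\le1$, which makes the sixth equation locally \emph{linear} in $y^6$: for small $|y^6|$ one has $f(y)=y^6\sum_k\rho_k^{1-\epsilon}\sum_n\sigma_n f_n(W^*)$, so that $y^6(t)=A\exp\psi(t)$ solves it exactly, with $\psi$ an explicit sum of backward stochastic integrals $\int_t^1 f_n(W^*)\,dW^6$. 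Third, and decisively, the signs $\sigma_n$ in that sum are chosen \emph{anticipatively} as $\sigma_n=\mathrm{sign}(\alpha_n)$ where $\alpha_n=\int f_n(W^*)\,dW^6$; conditionally on $W^*$ these are independent Gaussians, and a second-moment/Borel--Cantelli argument produces, almost surely, enough mass in $\sum_n|\alpha_n|$ at every scale to force $\psi(t)\le -bt^{-1/2}$ as $t\to0$. Hence $y^6(t)\to 0$ for every choice of the free constant $A$, and one obtains a one-parameter family of rough-path solutions all with $y(0)=0$. This path-dependent, non-adapted alignment of signs is precisely what distinguishes the pathwise uniqueness failure from the classical Itô theory (where the SDE with such $f$ has a unique strong solution), and it is the idea your proposal would need to supply.
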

\begin{proof} We use $b_1,b_2,\cdots$ to denote positive constants which
can depend on $\epsilon$ but on nothing else. We write $\eta=\epsilon/3$ and
introduce the notation $I_n=[(n+1)^{-\eta},n^{-\eta}]$,
$\tilde{I}_n=[(n+\frac43)^{-\eta},(n-\frac13)^{-\eta}]$. We also write
$\rho_k=k^{-4}2^{-k(1+\eta)/2}$ for $k=1,2,\cdots$. We write
$W^*(t)=(W^1(t),\cdots,W^5(t))$.

For a given path $W^*(t)$, we define $\Omega_k$ to be the set of odd
integers $n$ with $2^k\leq n<2^{k+1}$ and
dist$(W^*(I_n),W^*([0,1]\backslash\tilde{I}_n))\geq\rho_k$. Using Lemma 4,
and the fact that $|I_n|\sim\eta n^{-1-\eta}$, we see that for a given odd
$n$ with $2^k\leq n<2^{k+1}$, we have $\IP(n\not\in\Omega_k)\leq
b_1k^{-2}$.

For $n\in\Omega_k$ we find $f_n$ in $C^2(\R^5)$ such that $f_n(x)=1$ if
$x\in W^*(I_n)$ and $f_(x)=0$ if dist$(x,W^*(I_n))\geq\rho_k$, and such that
$\|f_n\|_{C^\alpha}\leq b_2\rho_k^\alpha$ for $0<\alpha\leq2$. Note that
then $f_n(W^*(t))=0$ for $t\in[0,1]\backslash\tilde{I}_n$. Now let
$\alpha_n=\int f_n(W^*(t))dW^6(t)$; then (for a fixed path $W^*$),
$\alpha_n$ is normally distributed with mean 0 and variance $\int f_n(W^*(t))
^2dt\geq|I_n|$, so $\IP(|\alpha_n|\leq k^{-2}2^{-k(1+\eta)/2})\leq b_3k^{-2}$.

Now we define $V_k$ to be the set of odd integers $n$ with $2^k\leq
n<2^{k+1}$ such that either $n\not\in\Omega_k$, or $n\in\Omega_k$ and
$|\alpha_n|\leq k^{-2}2^{-k(1+\eta)/2}$. Then for each odd $n$ with
$2^k\leq n<2^{k+1}$ we have $\IP(n\in V_k)\leq b_4k^{-2}$. So if $X_k$ is
the cardinality of $V_k$ then $\IE X_k\leq b_4k^{-2}2^{k-1}$ and so
$\IP(X_k\geq2^{k-2})\leq2b_4k^{-2}$. Hence almost surely there is $k_0$
such that for $k>k_0$ we have $X_k\leq2^{k-2}$, which implies
$\sum_{n\in\Omega_k}|\alpha_n|\geq\frac14k^{-2}2^{k(1-\eta)/2}$. Now
let $\sigma_n=$sign$(\alpha_n)$; then we have, for $k>k_0$, that
$\sum_{n\in\Omega_k}\sigma_n\alpha_n\geq\frac14k^{-2}2^{k(1-\eta)/2}$.

We also have that for $n\in\Omega_k$,
$\IP(\max_t\left|\int_t^1f_n(W^*(s))dW^6(s)\right|>1)\leq
b_5e^{-b_6n}$, and so with probability 1 we can (redefining $k_0$ if
necessary) suppose that $\left|\int_t^1f_n(W^*(s))dW^6(s)\right|\leq1$
for $n\in\Omega_k$, when $k>k_0$.

Let $\phi$ be a smooth function of one variable, vanishing outside
$[-2,2]$, such that $\phi(x)=x$ for $|x|\leq1$. Then define $f_k$ on
$\R^6$ by $f_k(x)=\rho_k^{2-\epsilon}\phi(x^6/\rho_k)\sum_{n\in\Omega_k}
\sigma_nf_n(x_1,\cdots,x_5)$, and set $f(x)=\sum f_k(x)$. Next,
let $\psi(t)=\sum_k\sum_{n\in\Omega_k}\psi_n^{\sigma_n}(t)$ where
\[\psi_n^\sigma(t)=-\rho_k^{1-\epsilon}\sigma\int_t^1f_n(W^*(s))dW^6+\frac12
\rho^{2(1-\epsilon)}\int_t^1f_n(W^*(s))^2ds\]
for $\sigma=\pm1$.

Now for $t<2^{-(k+1)\eta}$, where $k>k_0$, we have
\[\rho_k^{1-\epsilon}\sum_{n\in\Omega_k}\int_t^1f_n(W^*(s))dW^6(s)=
\rho_k^{1-\epsilon}\sum_{n\in\Omega_k}|\alpha_n|\geq\frac14
\rho_k^{1-\epsilon}k^{-2}2^{k(1-\eta)/2}\geq
k^{-6}2^{k(\epsilon+\eta\epsilon-2\eta)/2}\geq2^{k\eta/2}\]
for $k$ large enough. Note also that for all $n$, $\int_t^1f_n(W^*)dW^6$ is
either $\alpha_n$ or 0, unless $t\in\tilde{I}_n$, which for a given
$t$ can occur for at most 2 values of $n$. Taking into account the fact
that $\left|\int_t^1f_n(W^*(s))dW^6(s)\right|\leq1$, and noting that
the second term in the expression for $\psi(t)$ is bounded by 1 in
absolute value, it follows that $\psi(t)\leq-bt^{-1/2}$ for $t$ small.

Now let $y(t)=A\exp\psi(t)$, where $A$ is a constant. Then
\Z\label{dy}dy(t)=\sum_k\rho_k^{1-\epsilon}\sum_{n\in\Omega_k}\sigma_nf_n
(W^*(t))y(t)dW^6(t)\z
and if $|A|$ is small enough, $|y(t)|<\rho_k$ whenever any
$f_n(W^*(t))$ is non-zero, so (\ref{dy}) can be written as
$dy(t)=f(W(t))dW^6(t)$.

It follows that the system in the statement of
the theorem has solution $y^i=W^i$, $i=1,\cdots,5$, $y^6(t)=A\exp\psi(t)$
for any sufficiently small $A$, provided we can verify that (\ref{dy}) holds
in the sense of (\ref{eq10}). This can be done easily as follows: for $2^k
\leq n<2^{k+1}$ and $\sigma=\pm1$ let $y_n^\sigma(t)=\exp\psi_n^\sigma(t)$
and then
\[dy_n^\sigma=\rho_k^{1-\epsilon}\sigma f_n(W^*(t))y_n^\sigma(t)dW^6(t)\]
and then from H\"{o}lder bounds for $W$ we can deduce that
\Z\label{yn}y_n^\sigma(t)-y_n^\sigma(s)-\rho_k^{1-\epsilon}\sigma f_n(W^*(s))y_n^\sigma(s)\{W^6(t)-W^6(s)
\}-\rho_k^{1-\epsilon}\sigma\sum_{j=1}^5\partial_jf_n(W^*(s))A^{j6}_I(s,t)\z
is bounded in absolute value by $C_k(t-s)^\gamma$
where $C_k$ is dominated by a suitable power of $2^k$. Now note that on $\tilde
{I}_n$, $y$ is $y_n^{\sigma_n}$ multiplied by a positive constant which is $\leq A\exp
(-b2^{k\eta/2})$. This rapid exponential decay as $t\rightarrow0$ means that
the required bound for $y$ follows easily from the above bound for (\ref{yn}).
\end{proof}

Next we apply Theorem \ref{th5}, with the improvement described in Remark
5, to show that with probability 1 uniqueness holds for every $f$ in $C^2$.
The proof requires a variant of the Law of the Iterated Logarithm. To state
this we introduce the following notation: given $\tau\geq0$, let
\[M(\tau)=\max\frac{\sum_i|W^i(t)-W^i(s)|^2+\sum_{r,j}
|A^{rj}(s,t)|}{(t-s)\log\log(t-s)}\]
where the max is over all $s,t$ with $0\leq s\leq\tau\leq t\leq T$ and
$t-s\leq\frac1{10}$. Then we have

\begin{lemma}\label{lil} For any $T>0$ there are constants $c_1$ and $c_2$
such that $\IP(M(\tau)\geq K)\leq c_1e^{-c_2K}$ for any $\tau\in(0,T)$ and
$K>0$.
\end{lemma}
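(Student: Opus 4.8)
The plan is to reduce the statement to a single‑interval exponential tail bound for the numerator, and then to upgrade it to the maximum by a dyadic chaining argument, using crucially that the constraint $s\le\tau\le t$ leaves only $O(1)$ intervals to examine at each dyadic scale, so that the iterated‑logarithm normalisation supplies just enough room to sum over scales.

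Write $Q(s,t)=\sum_i|W^i(t)-W^i(s)|^2+\sum_{r,j}|A^{rj}(s,t)|$. First I would prove that there are constants $C_0,c_0>0$ with $\IP(Q(s,t)\ge\lambda(t-s))\le C_0e^{-c_0\lambda}$ for all $0\le s<t\le T$ and $\lambda>0$. Indeed $W^i(t)-W^i(s)$ is centred Gaussian with variance $t-s$, so $\IE|W^i(t)-W^i(s)|^{2k}\le(Ck(t-s))^k$; and in either the It\^o or the Stratonovich convention $A^{rj}(s,t)$ is an iterated It\^o integral (a polynomial of degree $\le 2$ in the Brownian increments, up to a deterministic term of size $O(t-s)$) which by Brownian scaling has the law of $(t-s)A^{rj}(0,1)$ and, by hypercontractivity of the Wiener chaos — equivalently, directly by the Burkholder--Davis--Gundy inequality — satisfies $\IE|A^{rj}(s,t)|^k\le(Ck(t-s))^k$ for every integer $k\ge1$. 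Summing, $\IE Q(s,t)^k\le(Ck(t-s))^k$, and Chebyshev's inequality with the choice $k\approx\lambda/(Ce)$ gives the bound.

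Next I would handle the maximum scale by scale. Fix $m_0\ge 3$ with $2^{-m_0}\le\frac1{10}$; on the range $2^{-m-1}<t-s\le2^{-m}$ one has $(t-s)\log\log\frac1{t-s}\ge c\,2^{-m}\log m=:B_m$, so $M(\tau)\ge K$ forces $Q(s,t)\ge KB_m$ for some $m\ge m_0$ and some $[s,t]$ with $s\le\tau\le t$ and $2^{-m-1}<t-s\le2^{-m}$. Fixing the scale $m$, I would approximate $s,t$ by grid points $a\le s$, $b\ge t$ of mesh $2^{-m-q}$ (with $q$ a fixed constant): since $s\le\tau\le t$ there are only $O(1)$ admissible pairs $(a,b)$, and $b-a\le 2^{-m+1}$. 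Expanding the areas over the decomposition $[a,b]=[a,s]\cup[s,t]\cup[t,b]$ and then telescoping $[a,s]$ and $[t,b]$ over ever‑finer dyadic sub‑intervals, one obtains, with $q_\ell$ (resp.\ $q_\ell'$) the value of $Q$ on the $\ell$‑th sub‑interval approaching $s$ (resp.\ $t$),
\[Q(s,t)^{1/2}\le C\Bigl(Q(a,b)^{1/2}+\sum_{\ell\ge1}q_\ell^{1/2}+\sum_{\ell\ge1}(q_\ell')^{1/2}\Bigr),\]
the exponent $\tfrac12$ absorbing the products $W^r(\cdot)W^j(\cdot)$ produced by the non‑additivity $A^{rj}(s,u)=A^{rj}(s,t)+A^{rj}(t,u)+W^r(s,t)W^j(t,u)$. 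Hence $M(\tau)\ge K$ through scale $m$ implies either $Q(a,b)\ge(3C)^{-2}KB_m$ for one of the $O(1)$ pairs $(a,b)$, or $q_\ell\ge\kappa^2 2^{-\ell/2}KB_m$ (resp.\ the same for $q_\ell'$) for some $\ell\ge1$, where $\kappa$ is a fixed constant coming from $\sum_\ell 2^{-\ell/4}<\infty$. The grid term has probability $\le C_0e^{-c_4K\log m}$ by the single‑interval bound; for sub‑level $\ell$ there are $O(2^\ell)$ candidate sub‑intervals (of length $\asymp 2^{-m-q-\ell}$) near $a$ or $b$, so that level contributes $\le C_0 2^\ell e^{-c_5 2^{\ell/2}K\log m}$, and summing over $\ell$ gives again $\le C'e^{-c_6K\log m}$. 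Therefore $\IP(M(\tau)\ge K\text{ via scale }m)\le C''e^{-c_6K\log m}=C''m^{-c_6K}$, and summing over $m\ge m_0$ — the series being dominated, for $K$ large, by a constant times $m_0^{-c_6K}$ — yields $\IP(M(\tau)\ge K)\le c_1e^{-c_2K}$ for $K$ large; for bounded $K$ the estimate is trivial after enlarging $c_1$. None of the constants depends on $\tau$.

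The conceptual input is minimal: exponential integrability of second‑order Wiener‑chaos functionals for the single‑interval estimate, and the remark that straddling $\tau$ restricts us to $O(1)$ intervals per dyadic scale so that the $\log\log$ supplies the summability over scales. The main obstacle is the bookkeeping of the sub‑interval telescoping — choosing $q$, $\kappa$ and the geometric weights $2^{-\ell/2}$ so that the probabilistic sum over $\ell$ and the deterministic ``budget'' sum $\sum_\ell 2^{-\ell/4}$ are simultaneously controlled, and verifying that the cross‑terms from the chain rule for $A^{rj}$ are indeed absorbed upon passing to $Q^{1/2}$ — but this is routine once the structure above is in place.
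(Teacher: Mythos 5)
Your proof is correct and follows essentially the same route as the paper: both rest on the exponential tail bound for $X_I=\sum_i|W^i(t)-W^i(s)|^2+\sum_{r,j}|A^{rj}(s,t)|$ over a single interval, combined with a dyadic union bound that exploits the straddling constraint $s\le\tau\le t$ to make the $\log\log$ normalisation summable over scales, with the area cross-terms from $A^{rj}(s,u)=A^{rj}(s,t)+A^{rj}(t,u)+W^r(s,t)W^j(t,u)$ absorbed by passing to $X^{1/2}$. The only difference is organisational: the paper runs one union bound over all dyadic intervals with a threshold weighted by the distance from $\tau$ and then decomposes an arbitrary straddling interval into dyadics with at most two per length, whereas you perform a scale-by-scale reduction to a nearby grid pair $(a,b)$ and telescope the excess; these are equivalent reformulations of the same chaining argument.
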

\begin{proof} For any interval $I=(s,t)$ we write $X_I=\sum_i|x^i(t)-x^i(s)|
^2+\sum_{r,j}|A^{rj}(s,t)|$. Then if $I$ has length $2^{-k}$ we have
$\IP(X_I\geq\mu2^{-k})\leq C_1e^{-C_2\mu}$ for all $\mu>0$. Let $\lambda>1$.
Then the probability that, for some $k>1$ and some dyadic $I\subseteq[0,T]$ with length $2^{-k}$,
we have $X_I>\lambda2^{-k/2}(|\tau-s|+2^{-k})^{1/2}$, does not exceed
\[C_1\sum_{k=2}^\infty\sum_I\exp(-C_2\lambda\log k(1+2^k|\tau-s|)^{1/2}\leq
C_3\sum_{k=1}^\infty k^{-C_2\lambda}\leq C_42^{-C_2\lambda}\]
where $\sum_I$ denotes a sum over all dyadic intervals of length $2^{-K}$ in $[0,T]$.
Hence, with probability at least $1-C_42^{-C_2\lambda}$ we have
\Z\label{xbd}X_I\leq\lambda2^{-k/2}(|\tau-s|+2^{-k})^{1/2}\z
for all $k\geq2$ and dyadic intervals $I\subseteq[0,T]$ with length $2^{-k}$.
Now if $I=(s,t)$ is any subinterval of $[0,T]$ containing $\tau$, then we can
express $I$ as the union of non-overlapping dyadic intervals, such that not
more then 2 of them can have the same length. Then when (\ref{xbd}) holds we
obtain $X_I\leq C_5\lambda(t-s)\log\log\frac1{t-s}$. Then $M(\tau)\leq C_5
\lambda$, and this holds with probability at least $1-C_42^{-C_2\lambda}$,
which gives the result.
\end{proof}

\begin{thm}\label{th7} If $W^i(t)$ are independent Brownian motions, then
with probability 1, for all $f\in C^2$ the equation (\ref{eq1}), with
$x^i=W^i$, has a unique solution in the sense of Definition \ref{def2}.
\end{thm}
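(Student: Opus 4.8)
The plan is to combine the quantitative uniqueness criterion of Theorem \ref{th5} with the strengthened hypotheses permitted by Remark 5, using the large-deviation estimate of Lemma \ref{lil} to upgrade ``for a fixed Brownian path'' to ``with probability 1, simultaneously for all $f\in C^2$''. The key point is that Theorem \ref{th5}, as sharpened in Remark 5, does not actually need $W$ to have finite $2$-variation: it suffices that for all sufficiently small intervals $[s,t]$ (with endpoints straddling the relevant point) one has $|W^j(t)-W^j(s)|\leq(\omega(s,t)\log\log\frac1{\omega(s,t)})^{1/2}$ and $|A^{rj}(s,t)|\leq(\omega(s,t)\log\log\frac1{\omega(s,t)})$, with $\omega(s,t)=c(t-s)$ for a constant $c$.

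First I would fix $T>0$ and, for each positive integer $m$, apply Lemma \ref{lil} together with Borel--Cantelli along a countable dense set of base points $\tau$ (and then use continuity in $\tau$) to conclude that with probability $1$ there is a finite random constant $c=c(W,T)$ such that
\[
\sum_i|W^i(t)-W^i(s)|^2+\sum_{r,j}|A^{rj}(s,t)|\leq c\,(t-s)\log\log\tfrac1{t-s}
\]
for all $0\le s\le t\le T$ with $t-s\le\frac1{10}$. (Here one must be slightly careful: Lemma \ref{lil} is stated for a fixed $\tau$, so one takes a union over rationals $\tau$ and uses that $M(\tau)$ is controlled by nearby rational values, the excess being absorbed into the constant; alternatively one observes directly from the proof of Lemma \ref{lil} that the event ``(\ref{xbd}) holds for all dyadic $I$'' is $\tau$-free once one drops the $|\tau-s|$ refinement and just bounds $X_I$ for all small dyadic $I$.) This gives a single null set off which the stated inequality holds, and on its complement the hypotheses of Remark 5 are met with $\omega(s,t)=c(t-s)$.

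Next I would fix a Brownian path $W$ in this full-measure set, and let $f\in C^2$ and $y_0\in\R^n$ be arbitrary. Since uniqueness is a local statement, given two putative solutions $y,\tilde y$ on $[0,\tau)$ that disagree, I localize: near any point where they first separate, $y$ stays in a bounded set, so I may replace $f$ by some $f_{(r)}\in C^2_0$ agreeing with it there, and then $f_{(r)}\in C^p_0$ for $p=2$ in the borderline sense. Now I run the argument of Theorem \ref{th5} verbatim, but reading every occurrence of the $p$-variation bound on $W$ and $A$ through Remark 5: the bounds (\ref{bds}) pick up extra $\log\log$ factors, (\ref{eq17}) becomes $|\y y(t)-\y y(s)|\le K_n+C_22^{-k-n/p}(\log n)^{1/p}$, the recurrence (\ref{eq18}) becomes $K_n\le(2+C2^{-n/p}\log n)K_{n+1}+C2^{-n-k}\log n$, and one deduces $K_n\le C'k\log k\,2^{-n}$, hence $\omega(t_k,t_{k+1})\ge\,$const$.(k\log k)^{-1}$. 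Since $\sum_k(k\log k)^{-1}=\infty$, summing over $k$ contradicts $\omega(0,t_1)<\infty$, so $y=\tilde y$ on a neighbourhood of the separation point, proving uniqueness for this $f$. Because the full-measure path set was chosen before $f$, this holds with probability $1$ for all $f\in C^2$ simultaneously; existence for each such $f$ is already furnished by Theorem \ref{th4} (applicable since $W$ has finite $p$-variation for every $p>2$), which completes the proof.

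The main obstacle is the very first step: making Lemma \ref{lil} yield a genuinely $\tau$-uniform, hence path-global, modulus of continuity for the pair $(W,A)$ on a single null set. Lemma \ref{lil} as stated is a per-$\tau$ exponential bound, and one needs either a Borel--Cantelli-plus-density argument handling the mild $\tau$-dependence of $M(\tau)$, or a direct re-derivation (which the proof of Lemma \ref{lil} essentially already contains, via (\ref{xbd})) of a global bound of the form ``$X_I\le c\,|I|\log\log\frac1{|I|}$ for all small $I$''. Once that uniform law-of-iterated-logarithm-type statement is secured, the rest is a careful but mechanical re-run of the proof of Theorem \ref{th5} with the bookkeeping of Remark 5.
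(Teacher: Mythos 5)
The plan of invoking Remark~5 and re-running Theorem~\ref{th5} is the right one, and matches the paper's strategy, but the first step of your argument -- and you correctly identify it as the crux -- asserts something that is \emph{false} for Brownian motion. You want a single finite random constant $c=c(W,T)$ such that
\[
\sum_i|W^i(t)-W^i(s)|^2+\sum_{r,j}|A^{rj}(s,t)|\leq c\,(t-s)\log\log\tfrac1{t-s}
\]
for \emph{all} small $[s,t]\subseteq[0,T]$, i.e.\ $\sup_\tau M(\tau)<\infty$ a.s. This cannot hold: by L\'{e}vy's modulus of continuity there exist (a.s.) pairs $s_n<t_n$ with $t_n-s_n\to0$ and $|W(t_n)-W(s_n)|^2\sim 2(t_n-s_n)\log\tfrac1{t_n-s_n}$, and since $\log(1/h)/\log\log(1/h)\to\infty$, these ``fast'' intervals make $M$ blow up. A Borel--Cantelli argument over a dense set of $\tau$ cannot rescue a \emph{uniform} bound, because the exceptional times are a.s.\ dense. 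So the modulus $\omega(s,t)=c(t-s)$ will not verify the hypothesis of Remark~5 on any full-measure event.

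What the paper does instead is not to bound $\sup_\tau M(\tau)$ but to exploit that $M(\tau)$ is \emph{integrable}: Lemma~\ref{lil} gives $\IE M(\tau)^2\le C$, hence $\int_0^T M(\tau)^2\,d\tau<\infty$ a.s. One then defines the random modulus $\omega(t)=\int_0^t M(\tau)\,d\tau$. By Cauchy--Schwarz $\omega(s,t)\le C(t-s)^{1/2}$, so $\log\log\frac1{t-s}\le C'\log\log\frac1{\omega(s,t)}$. And averaging the pointwise bound $X_{[s,t]}\le M(\tau)(t-s)\log\log\frac1{t-s}$ over $\tau\in[s,t]$ gives exactly $X_{[s,t]}\le\omega(s,t)\log\log\frac1{t-s}\le C'\omega(s,t)\log\log\frac1{\omega(s,t)}$. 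This $\omega$ is continuous and increasing, and the fast-point behaviour of $W$ is absorbed into the derivative $\omega'(\tau)=M(\tau)$ being large there, without any uniform bound being required. With this choice of $\omega$ the hypotheses of Remark~5 are met and the rest of your proposal -- localization to $f_{(r)}\in C^2_0$, the $\log k$ bookkeeping in the $K_n$ recursion, and $\sum(k\log k)^{-1}=\infty$ -- goes through as you describe. So replace the claimed uniform LIL modulus with the integrated modulus $\omega(t)=\int_0^tM(\tau)\,d\tau$, and the proof closes.
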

\begin{proof} We work on a fixed interval $[0,T]$. By Remark 5, it suffices
to show that, with probability 1, there is an increasing function $\omega(t)$
on $[0,T]$ such that
$|W^j(t)-W^j(s)|\leq\omega(s,t)^{1/2}\left(\log\log\frac1{\omega(s,t)}
\right)^{1/2}$ and $|A^{rj}(s,t)|\leq\omega(s,t)
\log\log\frac1{\omega(s,t)}$ for all sufficiently small intervals $[s,t]$.

To do this, we apply Lemma \ref{lil} to assert that $\IE M(\tau)^2\leq C_3$
for each $\tau\in[0,T]$. Then $\IE\int_0^TM(\tau)^2<\infty$ so
with probability 1, $\int_0^TM(\tau)^2d\tau<\infty$. When this integral is
finite we can define $\omega(t)=\int_0^tM(\tau)d\tau$ and note that by
Cauchy-Schwartz $\omega(s,t)\leq C(t-s)^{1/2}$. Then
$|W^i(t)-W^i(s)|^2\leq\omega(s,t)\log\log\frac1{t-s}\leq
C'\omega(s,t)\log\log\frac1{\omega(s,t)}$ with a similar bound for
$A^{rj}(s,t)$, which completes the proof.
\end{proof}

We conclude this section with an example showing that continuous
differentiability of $f$ is not sufficient for (even local) existence.
The construction, which is similar to Theorem \ref{th6} is based on the
following lemmas.

\begin{lemma}\label{l5} Let $W(t)$ be standard Brownian motion in $\R^d$ where
$d\geq5$, let $\gamma>\frac12+\frac1{d-4}$ and suppose $\A$ satisfies
$(\gamma-\frac12)^{-1}<\A<d-4$. Let $\eta>0$. Then with probability at
least $1-C\eta^\A$ we have that
\[|W(s)-W(t)\geq\eta|s-t|^\gamma\]
for all $s,t\in[0,1]$.
\end{lemma}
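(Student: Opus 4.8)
\textbf{Proof proposal for Lemma \ref{l5}.}

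The plan is to follow the same chopping-into-small-pieces scheme that was used in the proof of Lemma \ref{l4}, but now estimating a modulus-of-continuity lower bound rather than a distance between two ranges. Fix a large integer $N$ and subdivide $[0,1]$ into $N$ intervals $[t_j,t_{j+1}]$ with $t_j=j/N$. The first ingredient is an upper bound on the oscillation of $W$ on each short interval: for each $j$, with $\ell=N^{-\gamma'}$ for a suitable $\gamma'$ slightly less than $\gamma$ (chosen so $\gamma'>\frac12$, which is possible since $\gamma>\frac12$), one has $\IP(W([t_j,t_{j+1}])\not\subseteq B(W(t_j),\ell))\leq Ce^{-cN^{1-2\gamma'}/2}$, so summing over the $N$ intervals the bad event has probability at most $CNe^{-cN^{1-2\gamma'}/2}$, which is negligible compared with any power of $N$. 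The second ingredient concerns widely separated times: if $|s-t|\geq 2/N$ then $s$ and $t$ lie in distinct (non-adjacent, after a harmless adjustment) subintervals, so I want to control from below $|W(t_j)-W(t_k)|$ for all pairs of grid points. Here I would invoke the small-ball / escape-probability type estimate already used in Lemma \ref{l4}: for Brownian motion in $\R^d$ with $d\geq5$, the probability that $W$ ever returns within distance $r$ of a fixed point after a unit time gap is at most $Cr^{d-2}$. More precisely, for each pair $j<k$ with $t_k-t_j\geq 2/N$, the event $|W(t_j)-W(t_k)|<\rho$ has probability $\leq C\rho^{d-2}(t_k-t_j)^{-(d-2)/2}$ (Gaussian density bound), and I want the threshold $\rho$ to be comparable to $(t_k-t_j)^\gamma$ up to the grid-oscillation error $\ell$.

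The combinatorial heart of the argument is then to union-bound over all $O(N^2)$ pairs $(j,k)$. Setting $\rho_{jk}=2(t_k-t_j)^\gamma$ and using $(t_k-t_j)\geq 2/N$, the per-pair probability is $\leq C(t_k-t_j)^{\gamma(d-2)-(d-2)/2}=C(t_k-t_j)^{(d-2)(\gamma-1/2)}$; summing this over all pairs, grouped by the dyadic size of $t_k-t_j$, the sum converges (and is small) precisely when $(d-2)(\gamma-\frac12)>2$, i.e. when $\gamma>\frac12+\frac{2}{d-2}$. This is slightly stronger than the hypothesis $\gamma>\frac12+\frac1{d-4}$ as stated, so to match the stated exponent I would instead group the pairs more carefully: fix $s$ (or $t_j$) and use the stronger fact that, conditionally, $W([3,\infty))$ meeting a fixed ball of radius $r$ costs $Cr^{d-2}$ uniformly, so that after fixing the ``left'' endpoint the number of relevant ``right'' endpoints contributes only one extra power of $N$, giving a bad-event probability $\leq C\eta^{-(d-2)}\cdot(\text{one power of }N)$ type bound; choosing $N$ so that $\eta\approx N^{-\gamma}$ and using $\A$ in the stated range $(\gamma-\frac12)^{-1}<\A<d-4$ then converts this into the clean bound $C\eta^\A$. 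The role of the condition $\A<d-4$ (rather than $d-2$) is exactly to absorb these extra polynomial-in-$N$ factors coming from the two summations over endpoints, and the role of $\A>(\gamma-\frac12)^{-1}$ is to guarantee the geometric series in the dyadic decomposition actually converges.

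Finally I would stitch the two ingredients together: on the complement of both bad events (total probability $\geq 1-C\eta^\A$, after adjusting constants), for arbitrary $s,t\in[0,1]$ with $|s-t|$ not too small, pick the nearest grid points $t_j,t_k$; then $|W(s)-W(t)|\geq|W(t_j)-W(t_k)|-2\ell\geq 2(t_k-t_j)^\gamma-2\ell\geq\eta|s-t|^\gamma$ after absorbing the oscillation error $\ell=N^{-\gamma'}$ into the main term (this uses $\gamma'>\frac12$ so that $\ell$ is of strictly smaller order than the grid spacing to the power $\gamma$, possible since near the diagonal $(t_k - t_j)^\gamma$ is at least a fixed multiple of $N^{-\gamma}$); for $|s-t|$ below $2/N$ the bound is vacuous in the sense that it is subsumed once $\eta$ is taken small, or handled directly from the oscillation bound. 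The main obstacle I anticipate is the bookkeeping in the double union bound — getting the exponent of $\eta$ to come out to exactly $\A$ in the full range $(\gamma-\frac12)^{-1}<\A<d-4$, rather than a worse exponent, which is why the careful ``fix the left endpoint first'' grouping (mirroring the structure of the proof of Lemma \ref{l4}) is essential rather than a naive pairwise union bound.
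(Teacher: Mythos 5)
Your proposal diverges from the paper's proof in a way that creates a genuine gap, and the gap is in the single-mesh structure rather than just in the bookkeeping you flagged at the end.

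The paper proves this with a multi-scale (dyadic) union bound: for each scale $r$ and each $k$, it considers the event $E_{rk}$ that $W([k2^{-r},(k+1)2^{-r}])$ comes within $\eta2^{-r\gamma}$ of $W([(k+2)2^{-r},\infty))$, applies Lemma~\ref{l4} after Brownian scaling to get $\IP(E_{rk})\leq C\eta^\A 2^{-r(\gamma-\frac12)\A}$, union bounds over the $2^r$ values of $k$, and sums the geometric series in $r$ (convergent precisely because $(\gamma-\tfrac12)\A>1$). This is where both constraints on $\A$ do their work, and it handles all separations $|s-t|$ at once, with no reduction to grid points and hence no oscillation error to absorb.

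Your scheme fixes a single mesh $1/N$ and tries to pass from arbitrary $s,t$ to grid points by controlling oscillation on intervals of length $1/N$. This cannot be made to work: the typical oscillation of Brownian motion on an interval of length $1/N$ is of order $N^{-1/2}$, while the target lower bound for the closest pairs you consider is of order $(1/N)^\gamma=N^{-\gamma}$, and since $\gamma>\tfrac12$ the oscillation dominates the threshold. Concretely, your choice $\ell=N^{-\gamma'}$ with $\gamma'$ \emph{less} than $\gamma$ makes $\ell$ strictly larger than $N^{-\gamma}$, so it cannot be absorbed into the main term; and if instead you tried $\gamma'>\gamma>\tfrac12$ so that $\ell\ll N^{-\gamma}$, the oscillation tail bound $Ce^{-cN^{1-2\gamma'}/2}$ has a nonnegative exponent inside the exponential and does not decay at all. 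This is not a removable technicality: it is the reason the paper does not reduce to grid values plus an oscillation correction, but instead controls the distance between entire image segments via Lemma~\ref{l4} at every dyadic scale simultaneously. Your secondary concern — that the naive pairwise union bound produces the stronger condition $\gamma>\tfrac12+\tfrac{2}{d-2}$ and needs a ``fix the left endpoint first'' regrouping — is real, but it is subordinate to the oscillation problem; once you adopt the dyadic structure with the two-interval gap as in $E_{rk}$, Lemma~\ref{l4} already packages the correct ``one range versus one future'' estimate and the exponent comes out to $\A$ with no delicate combinatorics.
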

\begin{proof}  For
integers $r,k\geq0$ let $E_{rk}$ denote the event dist$(W(k2^{-r},
(k+1)2^{-r}),W((k+2)2^{-r},\infty))\leq\eta2^{-r\gamma}$. Then by Lemma \ref{l4},
$\IP(E_{rk})\leq C_1\eta^\A2^{-r(\gamma-\frac12)\A}$ and so, writing
$\delta=(\gamma-\frac12)\A-1$, we have $\IP(\cup_{k=0}^{2^r-1}E_{rk})\leq
C_1\eta^\A2^{-r\delta}$. The result follows by summing over $r$.
\end{proof}

\begin{lemma}\label{l6} Let $M>0$ and let $W(t)$ be standard Brownian motion
on $\mathbb{R}^8$. Then with probability 1 we can find a compactly
supported smooth function $f$ on $\mathbb{R}^7$ such that
$\sup|f|\leq1$, $\sup|Df|\leq1$ and $\int_0^1f(W^*(t))dW_8(t)>M$, where
$W^*=(W_1,\cdots,W_7)$.
\end{lemma}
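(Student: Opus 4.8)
The plan is to build $f$ by concentrating its mass along the image of the path $W^*$ on a small subinterval of $[0,1]$, exploiting two competing scalings: a function supported in an $\varepsilon$-neighbourhood of a short arc $W^*([a,b])$ can be taken with $\sup|f|\le 1$ and $\sup|Df|\le\varepsilon^{-1}\cdot\text{const}$ (bad), but if we only ask $\sup|Df|\le 1$ we are forced to take $\sup|f|\approx\varepsilon$ on that scale — except that we may spread the support over many disjoint arcs to accumulate a large stochastic integral. Concretely, I would choose a fast-decreasing sequence of times $t_n\downarrow 0$, say $t_n=n^{-\theta}$ for suitable $\theta>0$, with intervals $I_n=[t_{n+1},t_n]$, and let $\varepsilon_n$ be a lower bound (valid with high probability, via Lemma \ref{l5} applied on $\mathbb R^7$ with $d=7\ge 5$) for the separation of $W^*(I_n)$ from $W^*([0,1]\setminus\tilde I_n)$, where $\tilde I_n$ is a slight enlargement of $I_n$. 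On the good event, $W^*$ restricted to distinct $I_n$'s stays $\varepsilon_n$-separated, so bump functions $f_n$ supported in the $\varepsilon_n$-neighbourhood of $W^*(I_n)$, equal to $\varepsilon_n$ on $W^*(I_n)$ and with $\|Df_n\|_\infty\le 1$, have pairwise disjoint supports; then $f=\sum_n f_n$ automatically satisfies $\sup|f|\le\sup_n\varepsilon_n\le 1$ (for $n$ large, after discarding finitely many terms) and $\sup|Df|\le 1$.

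Next I would estimate the stochastic integral. Since $f(W^*(t))=f_n(W^*(t))$ for $t\in\tilde I_n$ and vanishes for $t$ outside $\bigcup_n\tilde I_n$, we get $\int_0^1 f(W^*(t))\,dW^8(t)=\sum_n\alpha_n$ where $\alpha_n=\int_{\tilde I_n}f_n(W^*(t))\,dW^8(t)$; conditionally on $W^*$, each $\alpha_n$ is centred Gaussian with variance $\int_{\tilde I_n}f_n(W^*(t))^2\,dt\ge\varepsilon_n^2|I_n|$, and the $\alpha_n$ are independent. So $\sum_n\alpha_n$ is (conditionally) Gaussian with variance at least $\sum_n\varepsilon_n^2|I_n|$; if the exponents are chosen so that this series diverges, then $\big|\sum_n\alpha_n\big|=\infty$ a.s., which is far more than $>M$ — but since $f$ must be a fixed function, I would instead fix a large finite $N$, keep only $f_1,\dots,f_N$, and observe that with positive probability the partial sum exceeds $M$; to get it almost surely I replace $f$ by $\pm f$ according to the sign of the integral, or better, since the statement is "with probability 1 we can find" (an existential inside the a.s.), I just need: a.s. there exists $N$ with $\big|\sum_{n\le N}\alpha_n\big|>M$, which follows from $\mathrm{Var}\to\infty$ and the second Borel–Cantelli / Gaussian tail bounds. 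Smoothness of $f$ is arranged by taking each $f_n$ to be a genuine $C^\infty$ bump (mollified), and compact support is automatic since all relevant $W^*(I_n)$ lie in a bounded region a.s.

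The main obstacle is reconciling the two normalisations $\sup|f|\le 1$ and $\sup|Df|\le 1$ with making the integral large: a single bump cannot do it, so the whole point is the $\varepsilon_n$-separation of the arcs $W^*(I_n)$, which is exactly what Lemma \ref{l5} delivers provided the exponents $\theta$ (governing $|I_n|\approx\theta n^{-1-\theta}$) and the neighbourhood radii $\varepsilon_n$ are tuned so that (i) $\sum_n\mathbb P(W^*(I_n)\text{ not }\varepsilon_n\text{-separated})<\infty$, so Borel–Cantelli gives separation for all large $n$, and simultaneously (ii) $\sum_n\varepsilon_n^2|I_n|=\infty$, so the accumulated Gaussian variance is infinite. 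Checking that a choice of exponents satisfies both — using the Lemma \ref{l5} bound $1-C\eta^{\A}$ with $\A<d-4=3$ and the small-time fluctuation estimate $\mathbb P(W^*(I_n)\not\subseteq B(W^*(t_n),\varepsilon_n))\le c\,e^{-c'\varepsilon_n^2/|I_n|}$ for the enlargement — is the delicate bookkeeping, entirely parallel to the constants $\rho_k$, $I_n$, $\tilde I_n$ juggling in the proof of Theorem \ref{th6}, and I would model the computation on that argument. Everything else (disjointness of supports, the bound $\sup|f|\le 1$, conditional Gaussianity and independence of the $\alpha_n$, smoothness and compact support) is routine once the good event is set up.
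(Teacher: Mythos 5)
Your high-level strategy --- using Lemma~\ref{l5} to isolate arcs of $W^*$, building a bump per arc with controlled Lipschitz norm, and accumulating the conditional Gaussian integrals with a sign choice --- is the right one, but the architecture you propose cannot produce the divergence you need. With a \emph{single} sequence of disjoint intervals $I_n\subset[0,1]$, a bump of height $\varepsilon_n$ on $W^*(I_n)$ with $\sup|Df_n|\le1$ forces $\varepsilon_n$ to be at most the separation radius; by Lemma~\ref{l4} (scaled to $|I_n|$) the separation event with radius $\varepsilon_n$ fails with probability $\approx(\varepsilon_n/|I_n|^{1/2})^{\A}$ with $\A<d-4=3$. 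Writing $\varepsilon_n=\eta_n|I_n|^{1/2}$, Borel--Cantelli requires $\sum\eta_n^{\A}<\infty$, hence $\eta_n\to0$. The conditional variance of $\alpha_n$ is $\approx\varepsilon_n^2|I_n|=\eta_n^2|I_n|^2$ and $\IE|\alpha_n|\approx\varepsilon_n|I_n|^{1/2}=\eta_n|I_n|$. Since the $I_n$ are disjoint in $[0,1]$ you have $\sum|I_n|\le1$, so \emph{both} $\sum\mathrm{Var}(\alpha_n)$ and $\sum\IE|\alpha_n|$ are finite. The integral therefore stays bounded almost surely, and no choice of the exponent $\theta$ can help; the two conditions you say you will reconcile are in fact irreconcilable in a one-bump-per-interval scheme.

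The ingredient you are missing is \emph{multiplicity within a scale}. The paper's proof works simultaneously at all dyadic scales $2^{-k}$, with $\sim2^k$ candidate intervals $I_{kn}=[(n-1)2^{-k},n2^{-k}]$ per scale and a single radius $\rho_k=k^{-\beta}2^{-k/2}$ for the whole scale. A second-moment plus Borel--Cantelli argument shows that, a.s.\ for all large $k$, at least $2^{k-2}$ of these intervals are $\rho_k$-separated (this is the set $\Omega_k$). Each surviving bump contributes $\IE|\alpha_{kn}|\approx 2^{-k/2}\rho_k\approx k^{-\beta}2^{-k}$, so the whole scale contributes $\gtrsim k^{-\beta}$, and $\sum_k k^{-\beta}=\infty$ because $\beta<1$. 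Note this is an $\ell^1$ accumulation: the total variance $\sum_{k,n}2^{-k}\rho_k^2\approx\sum_k k^{-2\beta}2^{-k}$ is \emph{finite}, so $\sum\alpha_{kn}$ converges a.s.\ and one must set the sign of each bump to $\mathrm{sign}(\alpha_{kn})$ individually; flipping the global sign of $f$, or waiting for a partial sum to be large, does not work. Your appeal to ``variance $\to\infty$'' is the wrong mechanism and in any case fails in both your setup and the paper's. The remaining steps you sketch (disjoint supports on the good event, Whitney extension and mollification, $\sup|f|\le1$ from $\sum_k\rho_k<\infty$) do match the paper once the multi-bump dyadic structure and per-bump signs are put in.
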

\begin{proof} Fix $\gamma$ with $\frac56<\gamma<1$ and then choose
$\alpha$ so that $(\gamma-\frac12)^{-1}<\A<3$. Note that then $\A>1$, so
we can fix $\beta$ with $0<\beta<1$ and $\alpha\beta>1$.

Let $k$ be a positive integer. For $n=1,2,\cdots,2^k$ let
$I_{kn}=[(n-1)2^{-k},n2^{-k}]$ and let
$\tilde{I}_{kn}=((n-2)2^{-k},(n+1)2^{-k})$. Given a path $W^*$, let
$\Omega_k$ be the set of odd integers $n$ with $0<n<2^k$ and
dist$(W^*(I_{kn}),W^*([0,1]\backslash\tilde{I}_{kn}))\geq\rho_k$, where
$\rho_k=k^{-\beta}2^{-k/2}$, and such that also
\[|W^*(s)-W^*(t)|\geq k^{-\beta}2^{(\gamma-\frac12)k}|s-t|^\gamma\]
for all $s,t\in\tilde{I}_{kn}$.  Let $N_k$ be the cardinality of
$\Omega_k$. By Lemmas \ref{l4} and \ref{l5}, with $d=7$ and scaling of $t$,
we see that for any odd $n$ we have $\IP(n\notin\Omega_k)\leq
C_1k^{-\alpha\beta}$ so $\IE(2^{k-1}-N_k)\leq
C_12^{k-1}k^{-\alpha\beta}$ and hence $\IP(N_k\leq2^{k-2})\leq
2C_1k^{-\alpha\beta}$. It follows that, with probability 1, there exists
$k_0$ such that $N_k>2^{k-2}$ for all $k\geq k_0$.

Still considering a fixed path $W_*$, with $k\geq k_0$, we find for each
$n\in\Omega_k$ a function $g_{kn}$ on [0,1] such that
$0\leq g_{kn}\leq\rho_k$ everywhere, $g_{kn}=0$ outside $I_{kn}$,
$|g_{kn}(s)-g_{kn}(t)|\leq2\rho_k|s-t|$ for all $s,t$, and $\int
g_{kn}^2=\frac13\rho_k^22^{-k}$. Let $F=W^*([0,1])$ and define $f_{kn}$
on $F$ by $f_{kn}(W^*(t))=g_{kn}(t)$ and note that from the definition
of $\Omega_k$ we have
\Z\label{eq19}|f_{kn}(x)-f_{kn}(y)|\leq C_2\rho_k\min\left(1,\left\{\frac{|x-y|}
{\rho_k}\right\}^{1/\gamma}\right)\z
for all $x,y\in F$.

Now let $\alpha_{kn}=\int f_{kn}(W^*(t))dW_8(t)=\int g_{kn}(t)dW_8(t)$.
Conditional on $W^*$, for fixed $k$ the $\A_{kn}$ are independent
normally distributed random variables with mean 0, and Var$(\A_{kn})=
\frac132^{-k}\rho_k^2$. Now let $X_k=\sum_{n\in\Omega_k}|\A_{kn}|$.
Then, using $\rho_k=2^{-k/2}k^{-\beta}$, we obtain $\IE X_k=
\sqrt{2/3\pi}N2^{-k}k^{-\beta}$ and Var$(X_k)=\frac13N_k2^{-2k}k^{-2\beta}$.
Then by Chebychev's theorem $\IP(X_k\leq N_k2^{-k-1}k^{-\beta})\leq C_3N_k^{-1}$.
Since $N_k\geq2^{k-2}$ we deduce $\IP(X_k\leq\frac18k^{-\beta})\leq
C_42^{-k}$. It follows that with probability 1 we have
$\sum_{k=k_0}^\infty X_k=\infty$, so we can find $k_1$ so that
$\sum_{k=k_0}^{k_1}X_k>M$.

We now need to extend $f_{kn}$ to the whole of $\R^7$ and smooth it. To
this end, we use Whitney's extension theorem (see Section VI.2 of \cite{st})
which gives a bounded linear mapping $T$ from the space of Lipschitz
functions on $F$ to the Lipschitz functions on $\R^7$. We also let $\phi\in
C_0^\infty(\R^7)$ with $\int\phi=1$, set $\phi_\E(x)=\E^{-7}\phi(x/\E)$, and
let $f_{kn}^\E=\phi_\E*Tf_{kn}$ for $\E>0$. Let $\A_{kn}^\E=\int
f_{kn}^\E(W^*(t))dW_8(t)$. Then with probability 1, $\A_{kn}^\E
\rightarrow\A_{kn}$ as $\E\rightarrow0$. So if $\E$ is chosen small
enough, we have $\sum_{k=k_0}^{k_1}\sum_n|\A_{kn}^\E|>M$. We fix such an
$\E$ and let $\sigma_{kn}=$sign$(\A_{kn}^\E)$. Now if
$h=\sum_{k=k_0}^{k_1}\sum_n\sigma_{kn}f_{kn}$ then (\ref{eq19}) implies a
Lipschitz bound $|h(x)-h(y)|\leq C_5|x-y|$ for $x,y\in F$. Now let $f=
\sum_{k=k_0}^{k_1}\sum_n\sigma_{kn}f_{kn}^\E=\phi_\E*Th$. Then $f$ is
smooth and $|Df|\leq C_5$ everywhere. Moreover $\int
f(W^*(t))dW_8(t)=\sum_{k=k_0}^{k_1}|\A_{kn}^\E|>M$, completing the
proof.
\end{proof}

\begin{thm}\label{th8} Let $W(t)$ be standard Brownian motion on $\R^8$. Then
with probability 1 there exists a compactly supported continuously
differentiable function $f$ on $\R^7$, which is $C^\infty$ on
$\R^7\backslash\{0\}$, such that the system
\[dy_i=dW_i,\ i=1,\cdots,7,\ \ \ \ \ dy_8=f(y_1,\cdots,y_7)dW_8\]
has no solution satisfying $y(0)=0$. To be more precise, there is no
continuous $y(t)$ on [0,1], such that $y(0)=0$ and the above equation is
satisfied locally on $(0,1)$ in the sense of Definition \ref{def2}, this notion being
well-defined since, with probability 1, $W^*$ avoids the origin for
$t>0$, and $f$ is smooth away from 0.
\end{thm}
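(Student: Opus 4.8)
The plan is to show that any would-be solution's $y_8$-component is forced to change by a definite amount across arbitrarily short intervals abutting $t=0$, which is impossible for a function continuous at $0$ with $y_8(0)=0$. The function $f$ is built by a rescaled, iterated version of the construction in Lemma \ref{l6} (compare Theorem \ref{th6}), with a Borel--Cantelli argument making it work on all scales simultaneously.

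Write $W^*=(W_1,\dots,W_7)$. Summing the inequality of Definition \ref{def2} over fine partitions, one sees first that any solution has $y_i=W_i$ for $i\le7$ (the first seven equations have $g^i_{rj}=0$, so they just say $y_i-W_i$ has increments $o(\tilde\omega)$, hence is constant), and then that $y_8$ is continuous with $y_8(0)=0$ and, for all $[s,t]\subset(0,1)$,
\[\Bigl|\,y_8(t)-y_8(s)-f(W^*(s))\,W_8(s,t)-\sum_{r=1}^{7}\partial_rf(W^*(s))\,A^{r8}(s,t)\,\Bigr|\le\theta(\tilde\omega(s,t))\]
with $\theta(\delta)=o(\delta)$ (here $g^8_{r8}=\partial_rf$ for $r\le7$ and $g^8_{88}=0$ since $f$ does not depend on $y_8$, and $A^{r8}_I=A^{r8}_S$ for $r\le7$, so the It\^o/Stratonovich distinction is irrelevant). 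Two consequences are needed. Applied to a single interval $[a,b]$ this gives, using $|W_8(a,b)|\le\omega(a,b)^{1/p}$ and $|A^{r8}(a,b)|\le\omega(a,b)^{2/p}$ (valid with $\omega(t)=t$ for any fixed $p>2$),
\[|y_8(b)-y_8(a)|\le\|f\|_\infty(b-a)^{1/p}+7\|Df\|_\infty(b-a)^{2/p}+\theta(\tilde\omega(a,b))\longrightarrow0\quad(b-a\to0).\]
Secondly, summing over a partition of $[a,b]$ and letting the mesh tend to $0$ (the $\theta$-errors vanish since $\theta(\delta)=o(\delta)$, and the first-order sum converges to the It\^o integral), $y_8(b)-y_8(a)=\int_a^bf(W^*)\,dW_8+L(a,b)$, where $L$ is the additive limit of $\sum_m\sum_r\partial_rf(W^*(t_m))A^{r8}(t_m,t_{m+1})$; a sewing-type estimate in the spirit of Lemma \ref{l2}, using the consistency relation for $A^{rj}$, the bound $|A^{rj}(s,t)|\le\omega(s,t)^{2/p}$, and the Lipschitz bound on $f$, shows a priori that $|L(a,b)|\le C\|Df\|_\infty(b-a)^{2/p}$, which also tends to $0$ as $b-a\to0$.

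Now construct $f$ as in Lemma \ref{l6}, rescaled onto a sequence of blocks $B_k\approx[(k+1)^{-\eta},k^{-\eta}]$ abutting $0$, each dyadically subdivided: on a large family $\Omega_k$ of sub-intervals along which suitable separations hold (Lemmas \ref{l4} and \ref{l5}), place mutually disjointly supported bumps $f_{kn}$ near $W^*$ of the sub-intervals, with controlled $C^1$ norms, and pick signs $\sigma_{kn}=\operatorname{sign}\int f_{kn}(W^*)\,dW_8$ so that $\int_{B_k}\bigl(\sum_{n\in\Omega_k}\sigma_{kn}f_{kn}\bigr)(W^*)\,dW_8=\sum_{n\in\Omega_k}\bigl|\int f_{kn}(W^*)\,dW_8\bigr|\ge c_k$ with $\sum_kc_k=\infty$; set $f=\sum_k\sum_n\sigma_{kn}f_{kn}$, smoothed and extended by mollification and Whitney's extension theorem as in Lemma \ref{l6}, so that it is compactly supported, $C^1$ on $\R^7$ (bounded derivative, the spatial separations limiting the overlap of bumps and their sizes shrinking to $0$ near the origin) and $C^\infty$ off $0$. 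Since $\sum_kc_k=\infty$, group consecutive blocks into super-blocks $S_j=[a_j,b_j]$ with $b_j\to0$ and $\int_{S_j}f(W^*)\,dW_8\ge1$. Then for large $j$,
\[|L(a_j,b_j)|=\Bigl|y_8(b_j)-y_8(a_j)-\int_{S_j}f(W^*)\,dW_8\Bigr|\ge1-|y_8(b_j)-y_8(a_j)|\ge\tfrac12,\]
contradicting the a priori bound $|L(a_j,b_j)|\le C\|Df\|_\infty(b_j-a_j)^{2/p}\to0$. The events that a given sub-interval fails the separation requirement, or carries too small a stochastic integral, have probabilities summable in $k$ (exactly as in Lemma \ref{l6}), so by Borel--Cantelli the construction succeeds on all but finitely many blocks almost surely, which is all that is needed.

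The main obstacle is the construction of $f$: one must force $\int_{S_j}f(W^*)\,dW_8\ge1$ on super-blocks of vanishing length --- possible only via the sign-matching trick, which turns a sum of many tiny, heavily cancelling stochastic integrals into a comparably large quantity --- while keeping $f$ compactly supported, genuinely $C^1$ (and no smoother) at the origin, $C^\infty$ elsewhere, and arranging the separations so that bumps from different scales overlap only boundedly; this is where Lemmas \ref{l4}, \ref{l5} and \ref{l6} and the moment and large-deviation estimates for Brownian functionals enter. A secondary point is the a priori control of the area functional $L$ at the borderline exponent $\gamma=p$, handled in the spirit of Lemma \ref{l2} and Theorem \ref{th5}.
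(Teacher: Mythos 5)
Your overall strategy matches the paper's: construct $f$ so that the stochastic integral $\int f(W^*)\,dW_8$ over a sequence of shrinking intervals abutting $0$ is bounded below, and deduce that no $y_8$ can both satisfy the equation locally away from $0$ and be continuous at $0$. The differences are in packaging. The paper works directly with dyadic intervals $I_k=[2^{-k-1},2^{-k}]$ and, on each one, invokes a suitably scaled Lemma~\ref{l6} with $M$ so large that $\int_{I_k}f_k(W^*)\,dW_8>1$ while $\|f_k\|_{C^1}\|\psi_k\|_{C^1}\le k^{-2}$ (the cutoffs $\psi_k$ having disjoint supports and equalling $1$ on $W^*(I_k)$); since $f$ is smooth away from $0$ and agrees with $f_k$ on $W^*(I_k)$, any local solution must have $y_8(2^{-k})-y_8(2^{-k-1})>1$ for all even $k$, a flat contradiction with continuity at $0$. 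You instead borrow the block schedule $[(k+1)^{-\eta},k^{-\eta}]$ from Theorem~\ref{th6}, accumulate $\sum_k c_k=\infty$ and regroup into super-blocks --- this works but is an unnecessary extra layer, since Lemma~\ref{l6} already gives an arbitrary amount of stochastic integral per block at the cost of a small $C^1$-norm, as the paper exploits. One caution about your intermediate decomposition $y_8(b)-y_8(a)=\int_a^b f(W^*)\,dW_8+L(a,b)$: the two partial Riemann sums do not sew separately at this regularity; only the combined modified-Euler sum does (as in Lemma~\ref{l2}). What is true is that for the It\^o/Stratonovich $A^{r8}$ (which coincide here, as you note), the $A$-sum tends to $0$ in $L^2$ given $W^*$, so $L\equiv0$ almost surely and the conclusion stands, but the stated ``sewing-type'' pathwise bound $|L(a,b)|\le C\|Df\|_\infty(b-a)^{2/p}$ needs this probabilistic input rather than the deterministic sewing lemma alone. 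With that caveat, your argument is sound and recovers the paper's proof, just via a longer route.
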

\begin{proof} For $k$ even and nonnegative let $I_k=[2^{-k-1},2^{-k}]$.
With probability 1 the intervals $W^*(I_k)$ are disjoint so we can find
a sequence of smooth functions $\psi_k$ with disjoint compact supports
($k=0,2,4,\cdots$) such that $\psi_k=1$ on $W^*(I_k)$. By Lemma \ref{l6} we
can find smooth $f_k$ such that $\|f_k\|_{C^1}\|\psi_k\|_{C^1}\leq k^{-2}$
and $\int_{I_k}f_k(W^*(t))dt>1$. Let $f=\sum f_k\psi_k$; then $f$ is
$C^1$ and $\int_{I_k}f_k(W^*(t))dt>1$ for even $k$. For this $f$, any
solution to the system must satisfy $W_8(2^{-k})-W_8(2^{-k-1})>1$ for
all even $k$, and so cannot be continuous at 0.
\end{proof}

One may expect that the dimensions of the spaces in Theorems \ref{th6} and \ref{th8} could
be considerably reduced. The point of the high-dimensional Brownian paths is to give good
separation between segments of the path, which avoids technical problems in the proofs.
Constructions in lower dimensions would probably be more complicated.

We remark that rough path theory can be used to interpret anticipating stochastic differential
equations of the form $dy^i=f^i_j(y)dW^j$ where $f^i_j$ is random in the sense that
it depends on the path $W$, without any adaptedness condition, provided $f$ has, with probability 1, the required
smoothness w.r.t. $y$ for the theory to apply. Theorem \ref{th6} and the results following it can be interpreted
in this light. Thus when $f$ is almost surely $C^2$ as a function of $y$, Theorem \ref{th7} asserts the
existence of a unique solution, with this interpretation. The proofs of Theorems \ref{th6} and \ref{th8},
in which $f$ is constructed given the path $W$, can easily be modified so that $f$ depends measurably on $W$,
and give counterexamples in this setting.

Other interpretations of anticipating SDEs can be found for example in \cite{np}. See \cite{fr} for a recent study of
the relation of the rough path approach to such other approaches.

\section{Other examples}\label{oe}

The examples below indicate that the smoothness requirements on $f$ in
the results of sections \ref{s2} and \ref{s3} are sharp in respect of the inequalities relating $\gamma$ and $p$.\vspace{.2cm}\\
{\bf Example 1.} Nonuniqueness of solutions for $f\in C^\gamma$ when
$1<\gamma<p<2$.

Suppose $1<\gamma<p<2$. Let $\beta$ and $\rho$ be large positive numbers
with $\gamma<\frac\rho\beta<\frac{\rho+1}\beta<p$, and let
$\alpha=p^{-1}$. Let $x^1(t)=t^\beta\cos(t^{-\rho})$,
$x^2(t)=t^\beta(2+\sin(t^{-\rho}))$.
Then $x^i\in C^\alpha$ since $\alpha<\beta/(\rho+1)$.
Next, we can find a $C^\gamma$ function $f$ such that
$f(y^1,y^2)=(y^2)^\gamma$ if $|y^1|>y^2>0$ and it is 0 if $y^1=0$.
Then the system
\[dy^1=f(y^1,y^2)dx^1,\ \ \ \ \ dy^2=dx^2,\ \ \ \ \  y^i(0)=0\]
has two solutions in $C^\alpha$ for small $t\geq0$:\\
$y^2=x_2$, $y^1=0$ and $y^2=x^2$, $y^1=\int(x^2)^\gamma dx^1$.

To verify the second solution, one needs to check that $y^1\geq{\rm
const}\ t^{\beta(\gamma+1)-\rho}\geq3t^\beta\geq x^2$ for $t$ small.
\vspace{.2cm}\\
{\bf Example 2.} Nonuniqueness of solutions for $f\in C^\gamma$ when
$2<\gamma<p<3$.

When $2<\gamma<p<3$ we can use the same construction as in example 1,
again with $\gamma<\frac\rho\beta<\frac{\rho+1}\beta<p$. Again we get
the same solutions as above, provided we interpret the differential
equation naively (everything being smooth for $t>0$). However this does
not fit in with the theory in section 3, because it requires
$A^{ij}(s,t)=\int_s^t\{x^i(u)-x^i(s)\}dx^j(u)$ (interpreting the integrals
naively), which does not satisfy the $p/2$-variation requirement.

One can get round this problem by defining
$A^{ij}(s,t)=-x^i(s)\{x^j(t)-x^j(s)\}$. One can check that this
satisfies the consistency condition and the variation requirement, and
that then both choices of $y^1,y^2$ are solutions, in the sense of Definition \ref{def2} of the modified
system
\[dy^1=(1-\rho)f(y^1,y^2)dx^1,\ \ \ \ \ dy^2=dx^2,\ \ \ \ \  y^i(0)=0\]
{\bf Example 3.} Nonexistence of solutions for $f\in C^{p-1}$ when
$1<p<2$.

Let $1<p<2$ and let $\alpha=1/p$. For $k=1,2,\cdots$ let $n_k$ be the
smallest integer $\geq2^{k-1}/(k\pi)$ and let $t_k=\pi n_k2^{1-k}$; then
$0<t_k\leq\pi$ and $t_k\sim1/k$ for $k$ large.

For $t\in[0,\pi]$ let $x^1(t)=\sum2^{-\alpha k}\sin(2^kt)$ where the sum
is over those integers $k\geq1$ with $t_k\geq t$. Then $x^1\in
C^\alpha$, and is locally Lipschitz on $(0,\pi]$. Also define
$z(t)=\sum_{k=1}^\infty2^{-(1-\alpha)k}\cos(2^k)$. Then $z\in
C^{1-\alpha}$. Now, using Lemma \ref{l7} below, we can find $x^2$ and $x^3$ in
$C^\alpha$ such that
$|(x^2(s),x^3(s))-(x^2(t),x^3(t))|\geq$const$|s-t|^\alpha$. Then, using
Whitney's extension theorem, we can
write $z(t)=f(x^2(t),x^3(t)$ where $f\in C^{p-1}$.

Now consider the system
\[dy^1=f(y^2,y^3)dx^1,\ \ dy^2=dx^2,\ \ dy^3=dx^3;\ \ \ \ y^1(0)=0,\ \
y^2(0)=x^2(0),\ \ y^3(0)=x^3(0)\]
Suppose we have a solution (in the sense of Definition \ref{def1}) of this system on an
interval $[0,\tau]$, where $0<\tau<\pi$. Then we must have $y^2=x^2$,
$y^3=x^3$, and, since $x^1$ is locally Lipschitz for $t>0$, the equation
can be interpreted naively for $t>0$, and we have, for
any $0<s<\tau$, that
\[\begin{split}y^1(\tau)-y^1(s)&=\int_s^\tau
zdx^1=\sum_k'\sum_{l=1}^\infty2^{(1-\alpha)(k-l)}\int_s^{\tau_k}
\cos(2^kt)\cos(2^lt)dt\\&=\frac12\sum_k'\sum_{l=1}^\infty2^{(1-\alpha)(k-l)}
\frac{\sin(2^k+2^l)\tau_k-\sin(2^k+2^l)s}{2^k+2^l}+\frac12\sum_k'(\tau_k-s)\\
&\ \ +\frac12\sum_k'\sum_{l\neq k}2^{(1-\alpha)(k-l)}
\frac{\sin(2^k-2^l)\tau_k-\sin(2^k-2^l)s}{2^k-2^l}\\&=\frac12\log\frac1s+O(1)\end{split}\]
as $s\rightarrow0$. Here $\tau_k=\min(\tau,t_k)$ and $\sum_k'$ denotes a
sum over those $k$ for which $t_k>s$. But then $y^1(s)\rightarrow\infty$
as $s\rightarrow0$, so (\ref{eq2}) is not satisfied at 0.

Hence no solution exists on any interval $[0,\tau]$.

The above proof used the following (probably known) lemma:

\begin{lemma}\label{l7} Suppose $\frac12<\alpha<1$. Then we can find positive
constants $c_1$ and $c_2$, and a function $u$ on [0,1] taking values in
$\R^2$, such that
\[c_1|s-t|^\alpha\leq|u(s)-u(t)|\leq c_2|s-t|^\alpha\]
for all $s,t\in[0,1]$.
\end{lemma}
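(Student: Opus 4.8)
\textit{Plan.} The idea is to build $u$ directly as a self-similar curve of von Koch type whose contraction ratio is $N^{-\alpha}$ for a suitable integer $N$, so that the exponent $\alpha$ is forced into the geometry. Fix $N\ge2$ and set $r=N^{-\alpha}$; since $\tfrac12<\alpha<1$ we have $N^{-1}<r<N^{-1/2}$, i.e.\ $1<Nr<\sqrt N$. I would take a \emph{generator} $g$: a simple polygonal arc from a point $P_0$ to a point $P_N$ made of $N$ consecutive straight edges, each of length $r\,|P_0P_N|$. Such an arc exists because its total length $Nr\,|P_0P_N|$ exceeds the end-to-end distance $|P_0P_N|$ (as $Nr>1$), leaving room for $N$ equal edges, and it can be kept simple; I would moreover choose $g$ from a fixed finite stock of edge-directions (say all edges horizontal or vertical) so that the orthogonal parts of the edge-similarities generate a finite group --- this will matter below --- and for the exceptional $\alpha$ for which no such generator with the exact parameters exists one uses a nearby configuration or a one-parameter family of generators. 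Parametrising the $k$-th edge over the $k$-th interval of length $N^{-1}$ gives $N$ affine similarities $A_1,\dots,A_N$ of ratio $r$, and I set $u(t)=\lim_{m\to\infty}A_{d_1}A_{d_2}\cdots A_{d_m}(P_0)$ for $t=0.d_1d_2\cdots$ in base $N$; the limit exists and is continuous because each $A_i$ is an $r$-contraction with $r<1$. After an affine rescaling I may assume $u(0)=(0,0)$, $u(1)=(1,0)$; write $K=u([0,1])$ and, for a length-$m$ base-$N$ word $w$, $K_w=A_w(K)=u(I_w)$ for the image of the level-$m$ cylinder $I_w$, a similar copy of $K$ of diameter $N^{-\alpha m}\operatorname{diam}K$.

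The upper bound is then immediate: given $s\ne t$, pick $m$ with $N^{-m-1}<|s-t|\le N^{-m}$; then $s$ and $t$ lie in a common level-$m$ cylinder or in two adjacent ones, which share an endpoint, so $|u(s)-u(t)|\le2N^{-\alpha m}\operatorname{diam}K\le c_2|s-t|^\alpha$. For the lower bound I argue by self-similarity. Given $s\ne t$, let $I_w$ be the smallest common cylinder, of depth $m$; inside it $s,t$ renormalise to $\sigma,\tau$ lying in distinct level-$1$ cylinders, and $|u(s)-u(t)|=N^{-\alpha m}|u(\sigma)-u(\tau)|$, $|s-t|=N^{-m}|\sigma-\tau|$, so it suffices to bound $|u(\sigma)-u(\tau)|\ge c_1|\sigma-\tau|^\alpha$ for $\sigma,\tau$ in distinct level-$1$ cylinders. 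If those cylinders are non-adjacent this is trivial: $|\sigma-\tau|$ is bounded below, and $u$ of the two disjoint compact pieces are a fixed positive distance apart. The real case is that of adjacent cylinders, whose images $A_k(K)$ and $A_{k+1}(K)$ meet only at the single point $p=u(k/N)$; peeling off one level at a time there, the problem collapses to showing that the nested pieces $A_kA_N^{\,j}(K)$ and $A_{k+1}A_1^{\,j}(K)$ shrinking onto $p$ remain separated by a fixed multiple of their common diameter $N^{-\alpha(j+1)}$, uniformly in $j$.

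That last statement is precisely that $u$ is a \emph{quasi-arc} --- any two subpieces lie at distance comparable to the larger of their diameters --- and it is the only substantial point; the hard part will be ruling out that the two arms of the curve approaching a junction $p$ \emph{spiral} into it (the orthogonal parts of $A_kA_N^{\,j}$ and $A_{k+1}A_1^{\,j}$ rotating with $j$) and so come arbitrarily close relative to their size. This is exactly why I would pin the generator down so that all the orthogonal parts live in a finite group: then, up to that group and to similarity, there are only finitely many local pictures at junction points, each with a strictly positive separation ratio and no rotation as one zooms in, and a compactness argument yields the required uniform separation. (Alternatively one may simply invoke the classical fact that for every $\alpha\in[\tfrac12,1)$ the snowflaked interval $\bigl([0,1],|s-t|^\alpha\bigr)$ admits a bi-Lipschitz embedding into $\R^2$, realised by a von Koch curve, of which Lemma \ref{l7} is just a restatement.) Granting the quasi-arc property, the lower bound $|u(s)-u(t)|\ge c_1|s-t|^\alpha$ falls out of the self-similar scaling just as the upper bound did.
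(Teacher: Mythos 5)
Your overall strategy --- build a planar quasi-arc realizing the $\alpha$-snowflake metric on $[0,1]$ by an explicit nested construction --- is the same as the paper's, and you correctly identify that the whole difficulty is the lower bound at junction points. But as written there is a genuine gap, and it sits exactly where you flagged it.

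The device you propose to kill the spiralling problem, namely taking all generator edges axis-parallel so that the orthogonal parts of the $A_i$ lie in a finite group, is incompatible with the contraction ratio you need. If every edge is horizontal or vertical of length $r=N^{-\alpha}$, then the displacement $P_N-P_0$ is a vector in $r\,\mathbb{Z}^2$; normalising $|P_N-P_0|=1$ forces $1/r=N^{\alpha}$ to be an integer. That is not an ``exceptional'' set of $\alpha$ to be patched --- it excludes all but countably many $\alpha\in(\tfrac12,1)$. Dropping the axis-parallel constraint to accommodate general $\alpha$ removes the finite-group structure, and then the ``finitely many local pictures, no rotation as one zooms in'' compactness argument no longer applies, so the non-spiralling (quasi-arc) claim is left unproved. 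The alternative you mention --- citing the bi-Lipschitz embeddability of the snowflaked interval into $\R^2$ --- is essentially citing the lemma rather than proving it. You also leave unverified that the IFS attractor is a simple arc (that level-$m$ pieces at combinatorial distance $>1$ are actually disjoint), which is a prerequisite for any lower bound.

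The paper's construction is designed precisely to avoid both issues. It does not use a strict self-similar IFS with ratio $N^{-\alpha}$. Instead it builds a nested sequence of \emph{chains of lattice squares} $C_r$, refining each side-$\epsilon_r$ square into an $n_{r+1}\times n_{r+1}$ subgrid and threading a chain of $m_{r+1}$ subsquares through it, with bounded integer sequences $n_r$, $m_r$ chosen only so that $\epsilon_r/\delta_r^{\alpha}$ (where $\epsilon_r=\prod n_i^{-1}$, $\delta_r=\prod m_i^{-1}$) stays bounded above and below. Because the ratio need only be comparable to $1$ rather than equal to it, no arithmetic constraint on $\alpha$ arises. And because ``chain of squares'' is defined with the explicit combinatorial separation condition ($Q_i$ and $Q_j$ disjoint when $|i-j|>2$, touching in at most a corner when $|i-j|=2$), two pieces of the curve at combinatorial distance greater than $2$ at level $r$ are automatically at Euclidean distance at least one lattice unit $\epsilon_r$ --- the quasi-arc property is built in by fiat at every scale, with no spiralling possible and no compactness argument needed. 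To repair your proof along your own lines you would either have to carry out the finite-group compactness argument without the axis-parallel simplification, or switch to a lattice-based, level-by-level construction like the paper's.
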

\begin{proof} We shall use the following terminology: given a lattice of
squares of side $\epsilon$, a {\em chain of squares} of side
$\epsilon$ is a sequence $Q_1,\cdots,Q_n$ of squares in the lattice,
such that $Q_i$ and $Q_{i+1}$ have one side in common, $Q_i$ and $Q_j$
are disjoint if $|i-j|>2$ and have at most a corner in common if
$|i-j|=2$.

Now, since $\frac12<\alpha<1$, it is not hard to construct bounded
sequences of integers $k_r$ and $m_r$, such that $k_r\geq2$, $m_r$ is
odd, $n_r\leq m_r\leq k_r^2$ where $n_r=2k_r+1$, and such that the sequence
$\epsilon_r/\delta_r^\alpha$ is bounded above and away from 0, where
$\epsilon_r=(n_1n_2\cdots n_r)^{-1}$ and $\delta_r=(m_1m_2\cdots
m_r)^{-1}$.

Next, we construct inductively a sequence $C_0,C_1,C_2,\cdots$ where
$C_r$ is a chain of squares of side $\epsilon_r$. We start by letting
$C_0$ be a single square of side 1. Next, supposing $C_r$ constructed as
a chain of squares of side $\epsilon_r$, we divide each square $Q$
of $C_r$ into a $n_{r+1}\times n_{r+1}$ grid of squares of side
$\epsilon_{r+1}$. Two of the sides of $Q$ abut other squares of $C_r$
and we now construct a chain of squares of side $\epsilon_{r+1}$
consisting of squares of this grid, joining
the middle edge squares of these two sides, containing no other edge
squares, and consisting of $m_{r+1}$ squares. 

The sequence $C_r$ converges to a curve $C$, which can be parametrised
by $t\rightarrow u(t),\ t\in[0,1]$, in such a way that $u(t)$ spends
time $\delta_r$ in each square of $C_r$. To see that $u(t)$ satisfies
the required inequality, suppose $s,t\in[0,1]$ and suppose
$\delta_r<|s-t|\leq\delta_{r-1}$. Then $u(s)$ and $u(t)$ belong to the
same or adjoining squares of $C_{r-1}$, so
$u(s)-u(t)|\leq3\epsilon_{r-1}$. On the other hand $u(s)$ and $u(t)$ are
not in the same square of $C_r$, and not in adjoining squares of
$C_{r+1}$, so $|u(s)-u(t)|\geq\epsilon_{r+1}$, which completes the
proof.\end{proof}\noindent
{\bf Example 4.} Nonexistence of solutions for $f\in C^{p-1}$ when
$2<p<3$.

The construction in Example 3 works for $2<p<3$, with the following
modifications. We use the same definitions for $x^1$ and $z$ .We need a
modified Lemma \ref{l7}, proved in a similar way, which assumes
$\frac13<\alpha<1$ and gives $u=(x^2,x^3,x^4)$ taking values in $\R^3$.
Using this and, for example, the version of Whitney's extension theorem in
Theorem 4 of Section VI.2 of \cite{st}, we get $z(t)=f(x^2(t),x^3(t),x^4(t))$
where $f\in C^{p-1}$ and now $Df(x^2(t),x^3(t),x^4(t))=0$. We then consider
the same system as in example 3 (with $x^4,y^4$ added in obvious fashion).
Then, because $Df(y^2,y^3,y^4)$ is always 0, whatever choice is made for
$A^{rj}$, the term involving $A^{rj}$ in (\ref{eq10}) always vanishes, and
any solution in the sense of Definition \ref{def2} will be a naive solution for $t>0$.
Then the same argument as before shows that no solution exists.

\section{Global existence and explosions.}\label{ge}

When $x(t)$ is defined on $[0,\infty)$ and $f$ is globally defined,
Theorems  \ref{th1}-\ref{th4} show that, under suitable conditions, equation
(\ref{eq1}) has either a solution for all positive $t$ or a solution such
that $|y(t)|$ goes to $\infty$ at some finite time (an explosion). In this
section we investigate what conditions will ensure that no explosion occurs,
so that a solution exists for all time.

For equations of the form (\ref{eq1}) where $x(t)$ has (locally) bounded
variation, it is not hard to show that if $D(R)$ is a positive increasing
function for $R\geq1$ with $\int_1^\infty D(R)^{-1}dR=\infty$ and if $f$ is
continuous on $\R^n$ and satisfies $|f(y)|\leq D(|y|)$ for all $y$ then
no solution can explode in finite time. The following theorem gives an
analogous result for the case when $x$ has finite $p$-variation for $p>1$.
In this case we require control of the growth of H\"{o}lder continuity
bounds of $f$ as well as $|f|$ itself.

We suppose that either (i) $1<p<\gamma<2$ or (ii) $2\leq
p<\gamma<3$ and let $\beta=\gamma-1$.

\begin{thm}\label{th9} (a) Suppose $D(R)$ and $A(R)$ are positive increasing
functions on $1\leq R<\infty$ with $D(R)\leq R^\beta A(R)$, such that
$|f(y)|\leq D(R)$ and that, in case (i) $|f(y')-f(y)|\leq
A(R)|y'-y|^\beta$, while in case (ii) $|Df(y')-Df(y)|\leq A(R)|y'-y|
^{\beta-1}$ for $|y,y'|\leq R$. Suppose $x(t)$ has finite $p$-variation
on each bounded interval, and in case (ii) $A(s,t)$ satisfies assumption
1 on each bounded interval. Then, provided
\[\int_1^\infty\left\{A(R)^{1-p}D(R)^{p-1-\beta p}\right\}^{1/\beta}dR
=\infty\]
no solution of (\ref{eq1}) can explode in finite time.\vspace{.2cm}\\
(b) Conversely, suppose $D(R)$ and $A(R)$ are positive increasing
functions on $1\leq R<\infty$ with $D(R)\leq R^\beta A(R)$, 
and suppose
\[\int_1^\infty\left\{A(R)^{1-p}D(R)^{p-1-\beta p}\right\}^{1/\beta}dR
<\infty\]
Then we construct $f$, $x(t)$, and in case (ii) $A(s,t)$, with the same
conditions as in (a), such that (1)
has a solution which explodes in finite time.
\end{thm}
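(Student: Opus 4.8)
\medskip
\noindent\emph{Proof strategy.} Part (a) I would prove by an Osgood--Nagumo type argument built on the estimates inside the proofs of Lemma \ref{l1} (case (i)) and Lemma \ref{l2} (case (ii)). Suppose, for contradiction, that a solution $y$ of (\ref{eq1}) on $[0,T)$ explodes at some $T<\infty$, and for $R>|y_0|$ let $\sigma_R$ be the first time with $|y(\sigma_R)|=R$; then $R\mapsto\sigma_R$ is increasing and $\sigma_R\to T$. On $[\sigma_R,\sigma_{2R}]$ one has $|y|\le 2R$, and the key point is that the conclusion of Remark 1 (resp.\ Remark 3) holds on this interval with all the $C^{\gamma-1}$-data of $f$ replaced by its values on $\{|y|\le 2R\}$: reading off the proof of Lemma \ref{l1}, the constant there called $B_1$ may be taken to be $\sup_{|y|\le 2R}|f|\le D(2R)$, the constant $L$ (equivalently $B_2$) comparable to $A(2R)D(2R)^{\beta}$ (using $D\le R^\beta A$), and hence the threshold $\delta$ of the Claim comparable to $\bigl(D(2R)^{1-\beta}/A(2R)\bigr)^{p/\beta}$. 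Thus for any $[s,t]\subseteq[\sigma_R,\sigma_{2R}]$ with $\omega(s,t)\le\delta$ we have $|y(t)-y(s)|\le 2B_1\,\omega(s,t)^{1/p}$.

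Next I would bound $\Delta:=\omega(\sigma_R,\sigma_{2R})$ from below. If $\Delta\le\delta$, applying the displayed estimate to the whole interval gives $R\le|y(\sigma_{2R})-y(\sigma_R)|\le 2B_1\Delta^{1/p}$, so $\Delta\ge(R/(2B_1))^p$. If $\Delta>\delta$, split $[\sigma_R,\sigma_{2R}]$ greedily into consecutive pieces all of $\omega$-length $\delta$ except the last; on each piece $|y|$ changes by at most $2B_1\delta^{1/p}$, so the number of pieces is at least $R/(2B_1\delta^{1/p})$, whence $\Delta\ge R\delta^{1-1/p}/(4B_1)$ (the factor $4$ rather than $2$ absorbs the short last piece, using again $D\le R^\beta A$). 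Substituting the values of $B_1$ and $\delta$, both cases yield $\omega(\sigma_R,\sigma_{2R})\ge c\,R\bigl(A(2R)^{1-p}D(2R)^{p-1-\beta p}\bigr)^{1/\beta}$ with $c=c(n,d,p,\gamma)$, i.e.\ $R$ times the integrand of the theorem at $2R$. Taking $R=2^k$ and summing over $k\ge k_0$, the intervals $[\sigma_{2^k},\sigma_{2^{k+1}}]$ are disjoint in $[0,T)$ so $\sum_k\omega(\sigma_{2^k},\sigma_{2^{k+1}})\le\omega(0,T)<\infty$; but the integrand of the theorem is a decreasing function $g$ of $R$ (as $A,D$ increase and both exponents $1-p$ and $p-1-\beta p$ are negative, the latter since $\beta=\gamma-1>p-1\ge1-1/p$), so $\sum_k 2^k g(2^k)$ and $\int_1^\infty g(R)\,dR$ diverge together, and the hypothesis forces the displayed sum to be infinite --- a contradiction. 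Case (ii) runs identically, using $J^i$, Lemma \ref{l2} and Remark 3, with the extra $g^i_{rj}A^{rj}$ terms treated as in the proof of Lemma \ref{l2} and, where a bound on $\|Df\|_\infty$ on $\{|y|\le 2R\}$ is needed, an interpolation estimate in terms of $D$ and $A$ (again using $D\le R^\beta A$); the exponents combine to the same integrand.

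For part (b) the plan is to exhibit an explicit example, in the spirit of Example 3 of Section \ref{oe}, in which a solution passes in turn through the dyadic levels $|y|\sim2^k$, spending $\omega$-time of order $2^k\bigl(A(2^k)^{1-p}D(2^k)^{p-1-\beta p}\bigr)^{1/\beta}$ --- the general term of a series which, by the sum--integral comparison, converges together with the integral --- in the $k$-th octave, so that the explosion time $T=\sum_k(\text{octave times})$ is finite. Concretely I would take $y$ essentially one-dimensional, driven by a path $x$ whose ``active'' component oscillates, while $|y|\sim2^k$, with amplitude $\sim\bigl(D(2^k)^{1-\beta}/A(2^k)\bigr)^{1/\beta}$ and with about $2^k\bigl(A(2^k)/D(2^k)\bigr)^{1/\beta}$ oscillations in the octave, while auxiliary components of $x$ (as in Example 3) encode the time, allowing $f$ --- chosen of size $\sim D(y)$ but with an oscillatory part synchronised with $x$ --- to produce, via the resonance mechanism of Example 3 (and, in case (ii), via a compatible choice of $A^{rj}(s,t)$ built from the same oscillations), a net increase of $|y|$ by $\sim2^k$ across the octave. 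One then checks: that the $p$-variation of $x$ (and, in case (ii), the $p/2$-variation of $A^{rj}$) on the $k$-th octave is of the stated order, so that the hypotheses hold with the resulting $\omega$; that the H\"older norm of $f$ on $\{|y|\le2^k\}$ is $O(A(2^k))$, which is exactly what the standing assumption $D(R)\le R^\beta A(R)$ permits; and that the constructed $y$ really is a solution in the sense of Definition \ref{def1} (resp.\ Definition \ref{def2}) --- which holds because away from the explosion time the data is smooth and the verification is as in Example 3.

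The routine part is (a): once the dependence of the constants in Lemmas \ref{l1} and \ref{l2} on $\|f\|_{\gamma-1}$ is made explicit the argument is standard, the only points needing a little care being the two-case lower bound for $\omega(\sigma_R,\sigma_{2R})$ and, in case (ii), the bookkeeping of the extra terms. The main obstacle is (b): the construction must be carried out once and for all, with a single $f\in C^{\gamma-1}$ and a single driving path covering all octaves simultaneously, the amplitudes, frequencies and the oscillatory part of $f$ tuned so that the resonance produces exactly the growth rate that (a) shows to be forced; and in case (ii) one must in addition produce a consistent family $A^{rj}(s,t)$ satisfying Assumption 1 with the prescribed $p/2$-variation. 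Reconciling the constants in this construction --- in particular keeping $f$ in the correct H\"older class while still driving $|y|$ through the dyadic levels at the critical speed --- is where the real work lies.
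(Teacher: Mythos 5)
Your argument for part (a) is essentially the paper's proof: you take octave stopping times $\sigma_{2^k}$, track the $(A,D)$-dependence of $B_1,B_2,\delta,L$ in the Claim inside Lemma~\ref{l1} (resp.\ Lemma~\ref{l2} with the interpolation bound for $Df$ via $D\le R^\beta A$), derive $\omega(\sigma_{2^k},\sigma_{2^{k+1}})\gtrsim 2^k\{A(2^k)^{1-p}D(2^k)^{p-1-\beta p}\}^{1/\beta}$, and sum. That is exactly what the paper does; your two-case treatment of $\Delta\lessgtr\delta$ is a harmless bookkeeping variant of the paper's direct count of $\delta$-subintervals.

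For part (b), however, you have a sketch rather than a proof, and the sketch is steering you toward unnecessary complications. You propose to encode time into auxiliary components as in Example~3 and then synchronize an oscillating part of $f$ with the driver by ``resonance.'' The paper avoids all of that by making the example autonomous in $y$: since the solution $y$ it constructs is scalar and strictly monotone in $t$, the phase can be taken to be an explicit function $\lambda(y)$ of $y$ alone, and both $f$ and $x$ are built from the same phase, $f(y)=D^*(y)(-\sin\lambda(y),\cos\lambda(y))$ and $x(t)=\A(y(t))(\cos\lambda(y(t)),\sin\lambda(y(t)))$, with $\A=(D^{*1-\beta}/A^*)^{1/\beta}$, $\lambda'=(A^*/D^*)^{1/\beta}$; these are tuned so that $D^*\A\lambda'\equiv1$, which makes $dy=f(y)\cdot dx$ a classical ODE with solution $t=\int_1^{y}F^*$, $F^*=(A^{*1-p}D^{*\,p-1-\beta p})^{1/\beta}$, and $t_*=\int_1^\infty F^*<\infty$. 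There is no Whitney-type time encoding and no auxiliary dimensions. The real work the paper does --- and which your sketch doesn't address --- is not the resonance mechanism at all but the regularization step: $A$ and $D$ are only assumed increasing, so one must replace them by smoothed $A^*,D^*$ (via the infimal and mollification constructions $\tilde D(y)=\inf_{u\ge1}u^rD(y/u)$, $D^*=2^{-r}\int\tilde D(yu)\phi(u)\,du$) so that $\lambda,\A,F^*$ are differentiable with derivatives controlled by the originals, and one must then verify the $\frac1p$-H\"older bound for $x$, the $\beta$-H\"older (resp.\ $C^{\gamma-1}$) bound for $f$, and, in case (ii), that $A^{ij}(s,t)=-x^i(s)\{x^j(t)-x^j(s)\}$ satisfies Assumption~1 with $\frac p2$-variation and makes the $A^{rj}$ term in (\ref{eq10}) vanish. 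None of this is trivial, and none of it appears in your plan. So part (b) as proposed is not a proof; at best it identifies the correct octave-by-octave growth rate, which is necessary but far from sufficient.

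A small additional remark: you implicitly use monotonicity of the integrand to pass between $\int_1^\infty g(R)\,dR$ and $\sum_k 2^kg(2^k)$; that is fine since the exponents $1-p$ and $p-1-\beta p$ are both $\le0$ under the standing hypotheses, and is consistent with the paper's (unremarked) use of the same comparison.
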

{\bf Remark.} The condition $D(R)\leq R^\beta A(R)$ is natural, since
the second condition on $f$ in part (a) implies the existence of a constant $C$ such that
$|f(y)|\leq C+A(R)R^\beta$ for $|y|\leq R$.

\begin{proof} (a) This is essentially a case of keeping track
of the bounds in the arguments of Sections \ref{s2} and \ref{s3}. We start
with case (i).

Let $A_k=A(2^k)$ and $D_k=D(2^k)$. Then
$\sum2^k\left\{A_k^{1-p}D_k^{p-1-\beta p}\right\}^{1/\beta}=\infty$. Let
$k_0$ be the smallest nonnegative integer such that $2^k>|y(0)|$. Then
for $k=k_0,k_0+1,\cdots$ let $t_k$ be the first time that $|y(t)|=2^k$
(if for a given $k$ no such time exists, then there can be no explosion).

Then for $t_k\leq t\leq t_{k+1}$ we have $|y|\leq2^{k+1}$, in which
region $|f(y)|\leq D_{k+1}$ and $|f(y)-f(y')|\leq A_{k+1}|y-y'|^\beta$.

Now we apply the estimates of Lemma \ref{l1} (which, in view of Remark 1,
apply to any solution) on the interval $[t_k,t_{k+1}]$,
and note that in the proof of Lemma \ref{l1} we can take $B_1=D_{k+1}$ and
$B_2=2A_{k+1}D_{k+1}^\beta$. We can then take
$L=4A_{k+1}D_{k+1}^\beta(1-2^{1-\gamma/p})^{-1}$ and
$\delta=(D_{k+1}^{1-\beta}A_{k+1}^{-1}(1-2^{1-\gamma/p})/4)^{p/\beta}$.
Then on any time interval with $\Delta\omega\leq\delta$ we have $\Delta
y\leq c_1(D/A)^{1/\beta}$ where
$c_1=2(\frac14(1-2^{1-\gamma/p}))^{1/\beta}<\frac14$. Then, since
$|y(t_{k+1})-y(t_k)|\geq2^k$, the number of intervals of length $\delta$
that fit into $[\omega(t_k),\omega(t_{k+1})]$ is at least the integer
part of $c_1^{-1}2^k(A_{k+1}/D_{k+1})^{1/\beta}$ which, in view of the fact that
$(D_{k+1}/A_{k+1})^{1/\beta}\leq2^{k+1}$, is
$\geq c_1^{-1}2^{k-1}(A_{k+1}/D_{k+1})^{1/\beta}$.

Hence
\[\omega(t_{k+1})-\omega(t_k)\geq2^{k-1}c^{-1}(A_{k+1}/D_{k+1})^{1/\beta}
\delta={\rm const}\ 2^k\{A_{k+1}^{1-p}D_{k+1}^{p-1-\beta p}\}^{1/\beta}\]
so $\sum(\omega(t_{k+1})-\omega(t_k))=\infty$, so
$\omega(t_k)\rightarrow\infty$ as $k\rightarrow\infty$, which means there
is no explosion.

The arguments for case (ii) is similar, using now the estimates of Lemma
\ref{l2}. First note that n this case we have for $t_k\leq t\leq t_{k+1}$
that $|Df(y)-Df(y')|\leq A_{k+1}|y-y'|^{\beta-1}$ which together with
$|f|\leq D_{k+1}$ gives by interpolation (using the fact that $D_{k+1}\leq
2^{(k+1)\beta}A_{k+1}$) that $|Df(y)|\leq c(A_{k+1}D_{k+1}^{\beta-1})^
{1/\beta}$.

Now we apply the estimates of Lemma \ref{l2} on $[t_k,t_{k+1}]$ and note that
we can take $B_1=A_{k+1}$, $B_2=c_2D_{k+1}(A_{k+1}D_{k+1})^{1/\beta}$ and
$B_3=D_{k+1}$ provided $\delta<(D_{k+1}^{1-\beta}A_{k+1}^{-1})^{-1}$
and then bounding each term in (12) we find we can take
$B_4=c_3A_{k+1}D_{k+1}^\beta$. So apart from constants we get the same
bounds for $\delta$ and $L$ as in case (i) and the proof concludes in the
same way.\vspace{.2cm}\\
(b) We construct a system of the form (\ref{eq1}) with $d=2$ and $n=1$.

Let $\rho'=(\beta p+1-p)/\beta$, $\rho''=(p-1)/\beta$,
$\rho=\min(\rho',\rho'')$ and for $y\geq1$ let
$F(y)=A(y)^{-\rho''}D(y)^{-\rho'}$, so that the hypothesis gives
$\int_1^\infty F(y)dy<\infty$.

We need to `smooth' the functions $A$ and $D$. Choose $r>\rho^{-1}$
and define $\tilde{D}(y)=\inf_{u\geq1}u^rD(y/u)$,
$\tilde{A}(y)=\inf_{u\geq1}u^rA(y/u)$. Then $\tilde{D}(y)\leq D(y)$ and
$\tilde{A}(y)\leq A(y)$. We let $\tilde{F}(y)=\tilde{A}(y)^{-\rho''}
\tilde{D}(y)^{-\rho'}$. Then
$\tilde{F}(y)\leq\sup_{u\geq1}u^{-r\rho}F(y/u)$. Now if we extend $F$ to
$[0,\infty)$ by setting $F(y)=F(1)$ for $0\leq y<1$ then we have
\[u^{-r\rho}F(y/u)\leq r\rho F(y/u)\int_u^\infty v^{-r\rho-1}dv
\leq r\rho\int_u^\infty F(y/v)v^{-r\rho-1}dv\]and so $\tilde{F}(y)
\leq r\rho\int_1^\infty F(y/v)v^{-r\rho-1}dv$ for $y\geq1$.

Hence
\[\int_1^\infty\tilde{F}(y)dy\leq r\rho\int_1^\infty\int_1^\infty
F(y/v)v^{-r\rho-1}F(y/v)dvdy\leq r\rho\int_1^\infty v^{-r\rho}dv
\int_0^\infty F(y)dy<\infty\]

Next we fix a smooth non-negative function $\phi$ supported on the
interval [1,2] such that $\int\phi=1$, and define
$D^*(y)=2^{-r}\int\tilde{D}(yu)\phi(u)du$ for $y\geq1$, and $A^*$ similarly.
Then $2^{-r}\tilde{D}(y)\leq D^*(y)\leq\tilde{D}(y)$ with a similar
inequality for $A^*$. Then we set
$F^*(y)=A^*(y)^{-\rho''}D^*(y)^{-\rho'}$ and we have $\int_1^\infty
F^*(y)dy<\infty$.

For $y\geq1$ we write $\lambda(y)=\int_1^y(A*/D^*)^{1/\beta}$,
$\A(y)=(D^*(y)^{1-\beta}A^*(y)^{-1})^{1/\beta}$ and
\[f(y)=D^*(y)(-\sin\lambda(y),\cos\lambda(y))\in\R^2\]
Let $t_*=\int_1^\infty F^*$ and define $y(t)$ on $[0,t_*)$ by
$t=\int_1^y F^*$ and note that $y(t)\rightarrow\infty$ as
$t\rightarrow t_*$. Then let $x(t)=\A(y(t))(\cos\lambda(y(t)),
\sin\lambda(y(t)))\in\R^2$ for $0\leq t<t_*$, and let $x(t)=0$ for
$t\geq t_*$. Then the equation $dy(t)=f(y).dx(t)$ is satisfied, as a
classical ODE on $[0,t_*)$. If we can show that $x$ has finite
$p$-variation and $f$ satisfies the required $\beta$-H\"{o}lder bound
then in case (i) $y$ will satisfy (\ref{eq1}) in the sense of Definition \ref{def1} and
we will have the required example. In case (ii) we define
$A^{ij}(s,t)=-x^i(s)\{x^j(t)-x^j(s)\}$; one can then check that the term
involving $A^{rj}$ in (\ref{eq10}) vanishes, so that (\ref{eq10}) will hold on
any compact subinterval of $[0,t_*)$, and again we have the required example
provided $A^{ij}$ satisfies the $\frac p2$-variation condition.

As preparation for proving the required bounds we note that the
assumption $D(y)\leq y^\beta
A(y)$ implies $D^*(y)\leq y^\beta A^*(y)$ and so
$\lambda'(y)\geq y^{-1}$. Hence if $y_1<y$ and
$\lambda(y)-\lambda(y_1)\leq1$ then it follows that $y\leq ey_1$, and
hence $D^*(y)\leq e^rD^*(y_1)$ with a similar inequality for
$A^*$, so the relative variation of each of the functions
$D^*,A^*,\lambda',\alpha$ is bounded by a constant on
$[y_1,y]$. Also we have
\[D^*(y)-D^*(y_1)\leq C\-_1D^*(y_1)(y-y_1)/y_1\leq C_2D^*(y_1)(\lambda(y)-\lambda
(y_1))\]
with similar bounds for $A^*$ and $\alpha$.
 
We now consider $x(t)$, and show in fact that it satisfies a
$\frac1p$-H\"{o}lder condition. Let $t_1<t_2<t_*$, and we write $y_1$
for $y(t_1)$ etc. First we suppose that $\lambda_2-\lambda_1\leq1$. Then
the discussion in the preceding paragraph shows that $|\A_2-\A_1|\leq
C_3\A_1(\lambda_2-\lambda_1)$ and so
\[|x_1-x_2|\leq|\A_2-\A_1|+\A_1(\lambda_2-\lambda_1)\leq
C_4\A_1(\lambda_2-\lambda_1)\]
Also $\lambda_2-\lambda_1\leq C-5\A_1^{-p}(t_2-t_1)$ so $\A_1\leq
C_6(\frac{t_2-t_1}{\lambda_2-\lambda_1})^{1/p}$ and so
\[|x_2-x_1|\leq C_7(t_2-t_1)^{1/p}(\lambda_2-\lambda_1)^{1-1/p}
\leq(t_2-t_1)^{1/p}\]
proving the H\"{o}lder estimate
in this case. In the case $\lambda_2-\lambda_1>1$ we have $|x_1|,|x_2|
\leq C(t_2-t_1)^{1/p}$ giving the estimate in this case also, and the
case $t_2\geq t_*$ follows similarly. In case (ii) the $\frac
p2$-variation condition for $A^{ij}$ follows easily.

The treatment of $f(y)$ is similar. We consider case (i) first. When
$\lambda_2-\lambda_1\leq1$ 
we have $|f_2-f_1|\leq C_8D^*_1(\lambda_2-\lambda_1)$ and $\lambda_2-
\lambda_1\leq C_9(A^*_1/D^*_1)^{1/\beta}(y_2-y_1)$ so
$D^*_1\leq C_{10}A^*_1(\frac{y_2-y_1}{\lambda_2-\lambda_1})^\beta$ so
\[|f_2-f_1|\leq C_{11}A^*_1(y_2-y_1)^\beta\leq C_{11}A_1(y_2-y_1)^\beta\]
The case $\lambda_2-\lambda_1>1$ is treated in the same way as before.

For case (ii), first suppose $\lambda_2-\lambda_1\leq1$. We have the
derivative bounds $\lambda'=(A^*/D^*)^{1/\beta}$, $|\lambda''|\leq
C_{12}y^{-1}(A^*/D^*)^{1/\beta}$, $(D^*)'\leq C_{12}y^{-1}D^*$, $|(D^*)''|\leq
C_{12}y^{-2}D^*$ from which we deduce, remembering that $D^*(y)\leq y^\beta
A^*(y)$, that $|f''(y)|\leq C_{13}D^*(A^*/D^*)^{2/\beta}$. Hence we have
\[\begin{split}|f_2'-f_1'|&\leq C_{14}D^*_1(A^*_1/D^*_1)^{2/\beta}(y_2-y_1)=
C_{14}D^*_1(A^*_1/D^*_1)^{2/\beta}(y_2-y_1)^{2-\beta}(y_2-y_1)^{\beta-1}\\
&\leq C_{15}D^*_1(A^*_1/D^*_1)^{2/\beta}(A^*_1/D^*_1)^{1-2/\beta}(y_2-y_1)^{\beta-1}\\
&=C_{15}A^*_1(y_2-y_1)^{\beta-1}\leq C_{15}A_1(y_2-y_1)^\beta\end{split}\]
as required. The case $\lambda_2-\lambda_1>1$ is treated as before, using
the bound $|f'|\leq C_{13}D^*(A^*/D^*)^{1/\beta}$.
\end{proof}

\section{Convergence of Euler approximations}\label{ce}

In the situation of Section \ref{s2} ($1\leq p<\gamma\leq2$), Theorem
\ref{th1} establishes the convergence of Euler approximations (\ref{eq3}) to
the solution as the mesh size of the partition tends to 0. In fact the proof
gives a bound for the rate of convergence: from Remark 1 (and the fact that
$f\in C^\gamma$ implies $f\in C^1$) we see that the
solution satisfies (\ref{eq1}) with $\theta(\delta)=C\delta^{2/p}$; then the
last paragraph of the proof of Theorem \ref{th1} gives
\[|y_K-y(t)|\leq{\rm const}\sum_{k=0}^{K-1}\omega_{k,k+1}^{2/p},\]
where $\{y_k\}$ is given by (\ref{eq3}) for a partition such that $t_K=t$.

In the situation of Section \ref{s3} ($2\leq p<\gamma\leq3$) similar reasoning
leads to a bound
\[|y_K-y(t)|\leq{\rm const}\sum_{k=0}^{K-1}\omega_{k,k+1}^{3/p}\]
where now $\{y_k\}$ is given by the scheme (\ref{eq11}).

Neither of the above results covers the known fact that Euler
approximations of the form (\ref{eq3}), containing no $A^{rj}$ term,
converge almost surely to solutions of It\^{o} equations
driven by Brownian motion. In this section we obtain a convergence
result in the setting of Section \ref{s3}, but using the Euler approximation
(\ref{eq3}) rather than (\ref{eq11}); for this to work we need to impose an
additional condition on the driving path, which can be thought of as a
`pathwise' version of the `independent increments' property of Brownian
motion. 

One form of the convergence result for It\^{o} equations states that, if
$x(t)$ is a $d$-dimensional Brownian motion and $f$ satisfies a global
Lipschitz condition, and $T>0$ is fixed, then with probability 1, for any
$\epsilon>0$ there is a constant $C$ such that if $0<t\leq T$ then
\Z\label{eq20}|y(t)-y_K|\leq Ct^{1-\epsilon}K^{-\frac12+\epsilon}\z
where $y(t)$ is the solution of (\ref{eq1}), interpreted as an It\^{o} equation,
and $\{y_k\}$ is given by the Euler scheme (\ref{eq2}) with $t_k=kt/K$. There is
also a convergence result for non-uniform step sizes, provided the mesh
points are stopping times. But convergence can fail if no restriction is
imposed on the partition, as shown in \cite{gl}.

For simplicity we use uniform step sizes, and then it is convenient to
assume a H\"{o}lder condition of order $\alpha=\frac1p$ on the driving path
rather than a $p$-variation condition.

We suppose $\frac13<\alpha<\frac12$ and $1-\alpha<\beta<2\alpha$. We
assume that the driving path $x\in C^\alpha[0,T]$ and that there is a
constant $B$ such that $A^{ij}(s,t)$ satisfies
\Z\label{eq21}\left|\sum_{l=k}^{m-1}A^{ij}(lh,(l+1)h)\right|\leq B(m-k)^\beta
h^{2\alpha}\z
whenever $0<k<m$ are integers and $h>0$ such that $mh\leq T$. Under
these hypotheses we have:

\begin{thm}\label{th10} Suppose $f\in C^\gamma$ where $\gamma>\alpha^{-1}$.
Let $0<t\leq T$, let $K$ be a positive integer, and let $z_k$ be defined by
the Euler recurrence relation
\[z_{k+1}^i=z_k^i+f^i_j(z_k)(x^j(t_{k+1})-x^j(t_k))\]
where $t_k=kt/K$ and $z_0=y_0$.

Let also $y$ be the solution of (\ref{eq1}) in the sense of Definition \ref{def2}.

Then $|z_k-y(t)|\leq Ct^{2\alpha}K^{\beta-2\alpha}$, where $C$ is a
constant independent of $K$ and $t$.
\end{thm}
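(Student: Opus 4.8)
After the usual localisation one may assume $f\in C^\gamma_0$. The plan is to obtain the estimate by comparing the given scheme $\{z_k\}$, which lives on the uniform partition of $[0,t]$ into $K$ steps, with the analogous schemes on the uniform partitions into $2^jK$ steps, and telescoping over these dyadic refinements. For a positive integer $M$ write $h_M=t/M$ and let $z^{(M)}(\cdot)$ denote the (unmodified) Euler recurrence run on the uniform $M$-step partition of $[0,t]$, evaluated at the partition points, so $z^{(K)}(t)=z_K$. I would reduce the theorem to two assertions: (i) $|z^{(M)}(t)-z^{(2M)}(t)|\le C't^{2\alpha}M^{\beta-2\alpha}$ for every $M$, with $C'$ independent of $M$; and (ii) $z^{(M)}(t)\to y(t)$ as $M\to\infty$. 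Granting these, $\{z^{(2^jK)}(t)\}_j$ is Cauchy (by (i), since $\beta-2\alpha<0$), so converges, necessarily to $y(t)$ by (ii); then $z_K-y(t)=\sum_{j\ge0}(z^{(2^jK)}(t)-z^{(2^{j+1}K)}(t))$, and summing the geometric series gives $|z_K-y(t)|\le C''t^{2\alpha}K^{\beta-2\alpha}$ (the inequality at a general partition point $t_k$ then follows by running the argument on $[0,t_k]$). Statement (ii) will come out as a by-product: comparing on a common $M$-partition the unmodified scheme with the modified scheme (\ref{eq11}), their one-step maps from a common point differ by exactly $f^h_r\partial_hf^i_jA^{rj}(t_k,t_{k+1})$, of size $\le Bh_M^{2\alpha}$ by (\ref{eq21}) with $m-k=1$; the summation-by-parts estimate below then shows the terminal values differ by $O(t^{2\alpha}M^{\beta-2\alpha})\to0$, and the modified scheme converges to $y(t)$ by Remark 4 (here $\omega(s,t)=c(t-s)$).

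For (i) I would fix $M$, write $h=h_M$, let $m_k$ be the midpoint of $[t_k,t_{k+1}]$ (a point of the $2M$-partition), and let $\mu_{k,M}$ be the $M$-step Euler flow from $t_k$ to $t$. Peeling off one $M$-step at a time, $z^{(M)}(t)-z^{(2M)}(t)=\sum_{k=0}^{M-1}(\mu_{k+1,M}(\zeta_k)-\mu_{k+1,M}(\eta_k))$, where $\eta_k=z^{(2M)}(t_{k+1})$ and $\zeta_k$ is one $M$-step applied to $z^{(2M)}(t_k)$. A direct computation gives $\zeta_k-\eta_k=(f^i_j(z^{(2M)}(t_k))-f^i_j(z^{(2M)}(m_k)))(x^j(t_{k+1})-x^j(m_k))$; since $z^{(2M)}(m_k)-z^{(2M)}(t_k)$ is itself one $2M$-step, Taylor expansion of $f$ (using $f\in C^\gamma$, $\gamma>2$, and the $\alpha$-H\"older bound on $x$) yields $\zeta_k-\eta_k=-g^i_{rj}(z^{(2M)}(t_k))b^{rj}_k+O(h^{3\alpha})$, with $g^i_{rj}=f^h_r\partial_hf^i_j$ and $b^{rj}_k=(x^r(m_k)-x^r(t_k))(x^j(t_{k+1})-x^j(m_k))$. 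The crucial point is that by the consistency relation for $A$, $b^{rj}_k=A^{rj}(t_k,t_{k+1})-A^{rj}(t_k,m_k)-A^{rj}(m_k,t_{k+1})$, so the partial sums $\sum_{k=a}^{b-1}b^{rj}_k$ are controlled by (\ref{eq21}) applied at the meshes of size $h$ and $h/2$: $|\sum_{k=a}^{b-1}b^{rj}_k|\le B'(b-a)^\beta h^{2\alpha}$. Next I would apply Lemma \ref{l2} to the recurrence for the Jacobian $\partial z/\partial z_0$ (an Euler recurrence for the extended field $(f,Df)\in C^{\gamma-1}$) to get $\|D\mu_{k+1,M}\|\le B_1$ uniformly in $M$ and $k$, and to the (linear) recurrence for the second derivative to get $\|D^2\mu_{k+1,M}\|\le B_2$; hence $\mu_{k+1,M}(\eta_k+\xi_k)-\mu_{k+1,M}(\eta_k)=D\mu_{k+1,M}(\eta_k)\xi_k+O(|\xi_k|^2)$ with $\xi_k=O(h^{2\alpha})$, giving $z^{(M)}(t)-z^{(2M)}(t)=-\sum_{k=0}^{M-1}w_kb_k+O(Mh^{3\alpha})$, where $w_k=D\mu_{k+1,M}(\eta_k)\,g(z^{(2M)}(t_k))$ (indices suppressed). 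The $w_k$ are bounded and satisfy $|w_k-w_l|\le B_3|k-l|^\alpha h^\alpha$ uniformly in $M$, because $z^{(2M)}$ is $\alpha$-H\"older in time with a uniform constant (Lemma \ref{l2}(a)), $g$ is Lipschitz, and $k\mapsto D\mu_{k,M}$ is $\alpha$-H\"older (again from the Jacobian recurrence, using $\sum(x_{i+1}-x_i)=x(t_l)-x(t_k)$).

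The remaining work is to bound $\Sigma=\sum_{k=0}^{M-1}w_kb_k$. I would write $\Sigma=w_0\sum_{k=0}^{M-1}b_k+\sum_{k=0}^{M-1}(w_k-w_0)b_k$; the first term is $\le|w_0|B'M^\beta h^{2\alpha}$. For the second I would use a dyadic summation by parts: for a block $I$ of consecutive indices put $E(I)=|\sum_{k\in I}(w_k-w_{\min I})b_k|$; splitting $I$ into its two halves $I_1,I_2$ and, for $k\in I_2$, writing $w_k-w_{\min I}=(w_k-w_{\min I_2})+(w_{\min I_2}-w_{\min I})$ and pulling the constant $w_{\min I_2}-w_{\min I}$ (of size $\le B_3(|I|/2)^\alpha h^\alpha$) out of $\sum_{k\in I_2}b_k$, one obtains $E(I)\le E(I_1)+E(I_2)+C|I|^{\alpha+\beta}h^{3\alpha}$. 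Since singletons contribute $0$, unwinding this recursion and summing the resulting geometric series — which converges precisely because $\alpha+\beta>1$ — gives $E(\{0,\dots,M-1\})\le CM^{\alpha+\beta}h^{3\alpha}$. With $h=t/M$ this yields $|\Sigma|\le CM^\beta h^{2\alpha}+CM^{\alpha+\beta}h^{3\alpha}=Ct^{2\alpha}M^{\beta-2\alpha}$, the two contributions carrying the same power of $M$. Since also $Mh^{3\alpha}=t^{3\alpha}M^{1-3\alpha}$ with $1-3\alpha\le\beta-2\alpha$ (because $\beta>1-\alpha$), combining with the bound for $\Sigma$ proves (i), and hence the theorem.

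The step I expect to be the main obstacle is the summation-by-parts estimate for $\Sigma$: the trivial bound $\sum|w_k||b_k|$ is of order $Mh^{2\alpha}=t^{2\alpha}M^{1-2\alpha}$, and a single summation by parts only improves this to order $t^{3\alpha}M^{1+\beta-3\alpha}$, both too large; it is the dyadic organisation, combined with genuine H\"older-in-index control of the weights $w_k$ (not merely a bound on consecutive differences) and the hypothesis $1-\alpha<\beta$, that forces the correct exponent $\beta-2\alpha$. The other point needing care is the uniform-in-partition control of the first two derivatives of the discrete Euler flow, so that the Taylor remainders are genuinely $O(h^{3\alpha})$: the doubly-extended coefficient field $(f,Df,D^2f)$ is only $C^{\gamma-2}$, but the recurrence for $D^2z$ is linear in $D^2z$ and the estimates underlying Lemma \ref{l2} still go through, which suffices.
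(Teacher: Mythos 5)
There is a genuine gap in your argument, and it is the step you flag as straightforward rather than the one you flag as the main obstacle.

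You invoke Lemma~\ref{l2} to obtain, uniformly in $M$ and $k$: (a) the Jacobian bound $\|D\mu_{k,M}\|\le B_1$, (b) the second-derivative bound $\|D^2\mu_{k,M}\|\le B_2$, and (c) the $\alpha$-H\"older-in-time bound on $z^{(2M)}$. All three feed into the H\"older-in-index control of the weights $w_k$ and into the Taylor remainders, and hence underlie the whole estimate of $\Sigma$. But Lemma~\ref{l2} concerns the \emph{modified} scheme (\ref{eq11}), not the unmodified Euler scheme (\ref{eq3}); and $\mu_{k,M}$ is the unmodified Euler flow. In the regime $\frac13<\alpha<\frac12$ (equivalently $2<p<3$) the unmodified Euler scheme carries \emph{no} uniform-in-mesh a priori bound from the $\alpha$-H\"older condition on $x$ alone: this is precisely the well-known failure mode of Euler approximations for rough drivers of this regularity, and it is exactly what the extra hypothesis (\ref{eq21}) is there to rule out. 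So the bounds (a)--(c) are not available before you start estimating $\Sigma$; on the contrary, they are the kind of statement that can only be established \emph{by} an estimate that already exploits (\ref{eq21}). Your argument is therefore circular: the control of $\Sigma$ presupposes the regularity of $z^{(M)}$, and the regularity of $z^{(M)}$ cannot be obtained without something like the control of $\Sigma$. (Lemma~\ref{l1} is of no help either, as it requires $p<2$.)

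The paper avoids this circularity by running a single binary-subdivision induction on the quantity $u_{kl}=z_l-T_{kl}(z_k)+g(z_k)(R_l-R_k)$, which compares the unmodified Euler iterate to the \emph{modified} flow $T_{kl}$ (for which Lemma~\ref{l2} really does give uniform Lipschitz and H\"older bounds), corrected by the accumulated area $R_l-R_k$. The induction simultaneously yields the area-sum estimate you want and the fact that the Euler iterates stay within $O((m-k)^{\alpha+\beta}h^{3\alpha})+O((m-k)^\beta h^{2\alpha})$ of the well-controlled modified flow; the regularity of $z_k$ then comes for free as a corollary. Your dyadic-mesh telescoping and your dyadic summation-by-parts are in fact structurally close cousins of that induction (both are binary decompositions driven by $\alpha+\beta>1$), but you have separated the bootstrap into two independent pieces --- regularity of the discrete flow, then the area sum --- and the first piece cannot stand on its own. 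Repairing this would essentially force you back onto a coupled induction of the paper's type; the rest of your computation (the consistency identity $b^{rj}_k=A^{rj}(t_k,t_{k+1})-A^{rj}(t_k,m_k)-A^{rj}(m_k,t_{k+1})$, the exponent bookkeeping $Mh^\beta h^{2\alpha}=t^{2\alpha}M^{\beta-2\alpha}$, and the use of $\beta>1-\alpha$) is correct and matches the paper's accounting.
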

\begin{proof} For $0\leq l<l\leq K$ we define $T_{kl}(z)$ to be $y_l$
where $\{y_m\}$ satisfies (\ref{eq11}) with $y_k=z$. From the estimates in
Section \ref{s3} we have
\Z\label{eq22}|T_{kl}(z)-T_{kl}(z')-(z-z')|\leq C_1\{(l-k)h\}^\alpha|z-z'|\z
where $h=t/K$. We also write
$R_k^{ij}=\sum_{l=0}^{k-1}A^{ij}(kh,(k+1)h))$ and
$g^i_r(y)=f^h_r(y)\partial_hf^i_j(y)$. Then we define (suppressing
indices for notational simplicity)
\[u_{kl}=z_l-T_{kl}(z_k)+g(z_k)(R_l-R_k)\]
for $0<k<l\leq K$.

Now if $0\leq k<l<m\leq K$ then
\[\begin{split}z_m&=T_{lm}(z_l)-g(z_l)(R_m-R_l)+u_{lm}\\
&=T_{lm}(T_{kl}(z_k)-g(z_k)(R_l-R_k)+u_{kl})-g(z_l)(R_m-R_l)+u_{lm}\\
&=T_{km}(z_k)+v_{klm}-g(z_k)(R_l-R_k)-g(z_l)(R_m-R_l)+u_{kl}+u_{lm}
\end{split}\]where
\[v_{klm}=T_{lm}(T_{kl}(z_k)-g(z_k)(R_l-R_k)+u_{kl})-T_{lm}(T_{kl}(z_k))
+g(z_k)(R_l-R_k)-u_{kl}\]and we have used the fact
that $T_{lm}(T_{kl}(z))=T_{km}(z)$. Hence
\Z\label{eq23}u_{km}=v_{klm}+w_{klm}+u_{kl}+u_{lm}\z
where $w_{klm}=(g(z_k)-g(z_l))(R_m-R_l)$. By (\ref{eq21}) we have the bound $|R_m-R_l|\leq
B(m-l)^\beta h^{2\alpha}$ and
\[|z_k-z_l|=|u_{kl}+(T_{kl}(z_l)-z_k)-g(z_k)(R_l-R_k)|\leq
C_2\{((l-k)h)^\alpha+|u_{kl}|\}\]so
\[|w_{klm}|\leq C_3\{((l-k)h)^\alpha\}+|u_{kl}|\}(m-l)^\beta h^{2\alpha}\]
And from (\ref{eq22}) we obtain
$|v_{klm}|\leq C_4\{(m-l)h\}^\alpha\{|u_{kl}|+(l-k)^\beta h^{2\alpha}\}$.
Putting these bounds into (\ref{eq23}) gives
\[|u_{km}|\leq|u_{kl}|\{1+C_5((m-k)h)^\alpha\}+|u_{lm}|+C_5(m-k)^{\alpha+\beta}
h^{3\alpha}\]

Also $u_{k,k+1}=0$. It then follows by an inductive argument similar to
that used in the proof of Lemma \ref{l2} that $|u_{km}|\leq
C(m-k)^{\alpha+\beta}h^{3\alpha}$ for $0\leq k<m\leq K$. We apply this
to $u_{0K}$ and use the fact that, by (\ref{eq9}) and Remark 3, $|T_{0K}(z_0)-
y(t)|\leq CKh^{2\alpha}$ together with the bound $|R_K-R_0|\leq BK^\beta
h^{2\alpha}$ to get the required bound for $z_K-y(t)$.
\end{proof}

The condition (\ref{eq21}) holds for Brownian motion (with the It\^{o}
interpretation of $A^{rj}$) for all $\alpha<\frac12$ and $\beta>\frac12$ with
probability 1, and so the bound (\ref{eq20}) for the error of the Euler
approximation follows from Theorem \ref{th9}. Indeed Theorem \ref{th9}
implies that, for almost all Brownian paths $x(t)$, for any $f\in C^\gamma$
where $\gamma>2$ and $T>0$, there is a constant $C$ such that the bound
(\ref{eq20}) holds for uniform-step Euler approximations to the solution of
(\ref{eq1}) on $[0,T]$. The construction in Theorem \ref{th6} can be modified
to show that this last statement can fail for $f\in C^\gamma$ if $\gamma<2$.
Indeed, for almost all 6-dimensional Brownian paths one can construct $f$ in
$C^\gamma$ for all $\gamma<2$ such that the Euler approximations to (1) fail
to converge.

We also note that Theorem \ref{th10} implies that the solution can be obtained
from the path $x(t)$ alone, since the $A(s,t)$ do not appear in the approximation.
This indicates that when (\ref{eq21}) holds the $A(s,t)$ are determined by the path
$x(t)$. And indeed it is not hard to deduce from (\ref{eq21}) that $A^{ij}(s,t)$ is
the limit as $N\rightarrow\infty$ of $\sum_{k=0}^Nx^i(t_k)\{x^j(t_{k+1})-x^j(t_k)\}$
where $t_k=s+k(t-s)/N$. Then (\ref{eq21}) is effectively a condition on the path $x(t)$.

\section*{Acknowledgements}

The author is grateful to Terry Lyons for many valuable discussions and to Peter Friz for
drawing his attention to \cite{fr}.

\bibliographystyle{amsplain}

\end{document}